\definecolor{textcol}{rgb}{0.37,0,0.57}
\definecolor{c1}{rgb}{0.84, 0.96, 0.88}
\numberwithin{equation}{section}
\newtheorem{corollary}{Corollary}
\newtheorem{lemma}{Lemma}
\newtheorem{rem}{Remark}
\newtheorem{defi}{Definition}
\newtheorem{theorem}{Theorem}
\newcommand{\sam}{\textcolor{Mahogany}}
\renewcommand{\P}{\mathbb{P}}
\newcommand{\e}{\mathbb{E}}
\newcommand{\ud}{\mathrm{d}}
\newcommand{\N}{\mathbb{N}}
\newcommand{\Z}{\mathbb{Z}}
\newcommand{\E}{\mathbb{E}}
\begin{document}

 \title[Coalescent structure of heavy-tailed GW trees]{Universality classes for the coalescent structure of heavy-tailed Galton-Watson trees}

\author{Simon C. Harris}
\address{University of Auckland, Private Bag 92019, Auckland 1142, New Zealand}
\email{simon.harris@auckland.ac.nz}

\author{Samuel G.G. Johnston}
\address{King's College London, Strand Building, London, WC2R 2LS, United Kingdom}
\email{sggjohnston@gmail.com}

\author{Juan Carlos Pardo}
\address{CIMAT A.C. Calle Jalisco s/n. C.P. 36240, Guanajuato, Mexico}
\email{jcpardo@cimat.mx}

\begin{abstract} 
Consider a population evolving as a critical continuous-time Galton-Watson (GW) tree.
Conditional on the population surviving until a large time $T$, sample $k$ 
individuals uniformly at random (without replacement) from amongst those alive at time $T$. What is the genealogy of this sample of individuals?
In cases where the offspring distribution has finite variance, the probabilistic properties of the joint ancestry of these $k$ particles are well understood, as seen in \cite{HJR20, J19}. 
In the present article, we study the joint ancestry of a sample of $k$ particles under the following  regime: 
the offspring distribution has mean $1$ (critical) and the tails of the offspring distribution are \emph{heavy} 
in that $\alpha \in (1,2]$ is the supremum over indices $\beta$ such that the $\beta^{\text{th}}$ moment is finite. 
We show that for each $\alpha$, after rescaling time by $1/T$, there is a universal stochastic process describing the joint coalescent structure of the $k$ distinct particles. 
The special case $\alpha = 2$ generalises the known case of sampling from critical GW trees with finite variance 
where only pairwise mergers are observed and the genealogical tree is, roughly speaking, some kind of mixture of time-changed Kingman coalescents.
The cases $\alpha \in (1,2)$ introduce new universal limiting partition-valued stochastic processes with interesting probabilistic structures
which, in particular,  have representations connected to the Lauricella function and the Dirichlet distribution,
and whose coalescent structures exhibit multiple-mergers of family lines.
Moreover, in the case $\alpha \in (1,2)$, we show that the coalescent events of the ancestry of the $k$ particles are associated with birth events that produce giant numbers of offspring of the same order of magnitude as the entire population size, and we compute the joint law of the ancestry together with the sizes of these giant births.
\end{abstract}
 \maketitle

 \vspace{0.2in}

\noindent {\sc Key words and phrases}: Galton-Watson tree, coalescent, genealogy, spines, regularly varying functions

\bigskip

 \noindent MSC 2000 subject classifications: 60J80, 60G09.

 \vspace{0.5cm}

\section{Introduction}

\subsection{Continuous-time Galton-Watson trees and their coalescent processes}

Let $r > 0$ and let $\overline{p} = (p_i)_{i \in \mathbb{N}_0}$ be a probability mass function on the non-negative integers. Consider a continuous-time Galton-Watson tree with branching rate $r$ and offspring distribution $\overline{p}$, where we start from a single initial particle at time zero. The initial particle has an exponential lifetime with parameter $r$ (i.e.\ expected length $1/r$), and upon death is replaced by a random number $L$ of offspring particles, where $\mathbb{P}(L=i) = p_i$. 
Similarly, each offspring particle independently repeats the behaviour of their parent, and so on for all subsequent generations: each particle dies at rate $r$, and upon death is replaced by a random number of offspring distributed like $\overline{p}$. In this process we write $Z_t$ for the number of particles alive at time $t$.

Continuous-time Galton-Watson trees are endowed with a natural notion of genealogy: each particle living at some time $t$ had a unique ancestor particle living at each earlier time $s < t$. It is then natural to ask questions about the shared genealogy of different particles alive in the population alive at a certain time. 
Specifically, conditioning on the event $\{ Z_T \geq k\}$ that there are at least $k$ particles alive at a time $T > 0$, consider picking $k$ 
particles uniformly at random without replacement
from the population alive at time $T$. 
Label these $k$ sampled particles with the integers $1$ through $k$.  
Recalling some standard terminology, a collection of disjoint non-empty subsets of $\{1,\ldots,k\}$ whose union is $\{1,\ldots,k\}$ is known as a set partition of $\{1,\ldots,k\}$.
We may associate with our sample of $k$ labelled particles a stochastic process $\pi^{(k,T)} := (\pi^{(k,T)}_t)_{t \in [0,T]}$ taking values in the collection of set partitions of $\{1,\ldots,k\}$ by declaring
\begin{align} \label{eq:partition}
\text{$i$ and $j$ in the same block of $\pi^{(k,T)}_t$} \iff \text{$i$ and $j$ are descended from the same time $t$ ancestor},
\end{align}
where, more precisely, in \eqref{eq:partition} we mean that the time $T$ particle labelled with $i \in \{1,\ldots,k\}$ and the time $T$ particle labelled with $j \in \{1,\ldots,k\}$ share the same unique ancestor in the time $t$ population. 
This set partition process construction is also seen in, for example, \cite{BLG} and \cite{J19}.

Since the entire process begins with a single particle at time $0$, it follows that each of the $k$ particles share the same 
initial ancestor, and accordingly $\pi^{(k,T)}_0 = \{\{ 1,\ldots,k\}\}$, that is, $\pi^{(k,T)}_0$ is the partition of $\{1,\ldots,k\}$ into a single block. 
Conversely, since we choose uniformly without replacement, each of the particles are distinct at time $T$, hence $\pi^{(k,T)}_T = \{\{1\},\ldots,\{k\} \}$ is the partition of $\{1,\ldots,k\}$ into singletons. More generally, as $t$ increases across $[0,T]$, the stochastic process $\pi^{(k,T)}$ takes a range of values in the partitions of $\{1,\ldots,k\}$, with the property that the constituent blocks of the process break apart as time passes. With this picture in mind, we define the \emph{split times} 
\begin{align*}
\tau_1 < \ldots < \tau_m
\end{align*}
to be the times of discontinuity of $\pi^{(k,T)}$. That is, at each time $\tau_i$, a block of $\pi^{(k,T)}_{\tau_i -}$ breaks into several smaller blocks in $\pi^{(k,T)}_{\tau_i}$. We note that $\pi^{(k,T)} $ is almost-surely right continuous.

Numerous authors have studied the process $\pi^{(k,T)}$ in its various incarnations and in the setting of various continuous-time Galton-Watson trees (see Section \ref{sec:literature} for further discussion). Harris, Johnston and Roberts \cite{HJR20} studied the large $T$ asymptotics of the process $\pi^{(k,T)}$ in the setting where the offspring distribution is critical (i.e. $\sum_{i \geq 0 } ip_i = 1$) with finite variance (i.e. $\sum_{i \geq 0} i(i-1)p_i < \infty$). Under these conditions they established the convergence in distribution of the renormalised process $(\pi^{(k,T)}_{sT})_{s \in [0,1]}$ to a universal stochastic process $\nu^{(k,2)} := (\nu^{(k,2)}_s)_{s \in [0,1]}$ taking values in the set of partitions of $\{1,\ldots,k\}$. This limiting process $\nu^{(k,2)}$ is universal in the sense that it does not depend on the precise form of the offspring distribution, only that the distribution is (near) critical and has finite variance. Harris et al.\ \cite{HJR20} show that $\nu^{(k,2)}$ only exhibits binary splits (i.e.\ every discontinuity amounts to one block breaking into exactly two sub-blocks) where: 
if there are currently $i$ blocks of sizes $a_1,a_2,\dots,a_i$ the probability the next block to split is block $j$ is $(a_j-1)/(k-i)$;
for any block of size $a$ that splits, the size of its first sub-block is uniformly distributed on $\{1,2,\dots,a-1\}$; 
and, independently of the block topology, the joint distribution of the $k-1$ splitting times $0 < \tau_1 < \ldots < \tau_{k-1} < 1$ is given by 
\begin{align} \label{eq:binarysplits}
f_k(t_1,\ldots,t_{k-1}) = k\int_0^\infty\prod_{i=1}^{k-1}\left(\frac{\varphi}{(1 +\varphi(1-t_i))^2}\right)\frac{1}{(1+\varphi)^2}\ud \varphi~\ud t_1\cdots\ud t_{k-1}. 
\end{align}
Further, when
viewed backwards in time, this partition process $\nu^{(k,2)}$ has the same topology as Kingman's coalescent \cite{Kingman82}, that is, any two blocks are equally likely to be the next to merge. 

\subsection{Main results}
In the present article, we will consider Galton-Watson trees whose offspring distribution is critical but with heavy tails, ultimately discovering a new collection of  \emph{universal stochastic processes} $\{ \nu^{(k,\alpha)} :=  (\nu^{(k,\alpha)}_s)_{s \in [0,1]} : \alpha \in (1,2] \}$ that describe their limiting genealogical structures. 
We define the probability generating function (PGF) of the offspring distribution $\overline{p}=(p_i)_{i \in \mathbb{N}_0}$ by
\[
f(s):=\E[s^L]=\sum_{j\ge 0} p_js^j, \qquad \textrm{for} \quad s\in [0,1].
\]
Throughout, we will assume $p_0>0$.
For some $\alpha\in(1,2]$, suppose that the PGF of $\overline{p}$ can be written as
\begin{equation}\label{eq:hyp1}\tag{\bf{H1}}
f(s)=s+(1-s)^\alpha\ell\left(\frac{1}{1-s}\right)
\end{equation}
 where $\ell$ is a slowly varying function at infinity, that is  for any $\lambda>0$,
\[
\lim_{x\to \infty}\frac{\ell(\lambda x)}{\ell(x)}=1.
\]
Note, such an $\alpha$ will be unique and it can be verified that for $f$ of the form in \eqref{eq:hyp1}, we have $f'(1) = \sum_{ j \geq 0} jp_j  = 1$, that is, $f$ is the PGF of a critical offspring distribution. The higher moments are slightly more delicate. 
If $\alpha \in (1,2)$, we will see later 
that if $L$ is an integer-valued random variable whose moment generating function takes the form in \eqref{eq:hyp1}, then $\mathbb{E}[L^\kappa] < \infty$ whenever $\kappa < \alpha$, and $\mathbb{E}[L^\kappa] = \infty$ whenever $\kappa > \alpha$. The moment $\mathbb{E}[L^\alpha]$ itself may be either finite or infinite. 
In the setting where $\alpha = 2$, however, it turns out that moment generating functions of the form in \eqref{eq:hyp1} encompass those of all unit-mean probability distributions on the non-negative integers with finite variance. 
On the other hand, as we will observe at \eqref{a2infinite}, it is possible to find infinite variance random variables whose moment generating functions take the form \eqref{eq:hyp1} with $\alpha = 2$.

We are now ready to state our first main result on universality classes for the coalescent structure of  Galton-Watson trees 
with heavy-tailed offspring distributions.

\begin{theorem} \label{thm:A}
Consider a continuous-time Galton-Watson tree with branching rate $r$ and a critical offspring distribution $\overline{p}$ whose moment generating function satisfies \eqref{eq:hyp1} for some $\alpha\in(1,2]$. 
Conditional on $\{Z_T \geq k \}$, let $(\pi^{(k,T)}_t)_{t \in [0,T]}$ denote the ancestral process associated with $k$ particles sampled uniformly at random without replacement from the population alive at time $T$. 
Then, the ancestral process converges in distribution for large times with
\begin{align*}
\left( \pi^{(k,T)}_{sT} \right)_{s \in [0,1]} \implies (\nu^{(k,\alpha)}_s)_{s \in [0,1]}  \qquad \text{as} \quad T \to \infty,
\end{align*}
where, for each $\alpha \in (1,2]$, $ (\nu^{(k,\alpha)}_s)_{s \in [0,1]}$ is a stochastic process taking values in the set of partitions of $\{1,\ldots,k\}$ which is \emph{universal} in the sense that its law depends only on the moment index $\alpha$, but not on the specific offspring distribution.
In other words, for each $\alpha\in(1,2]$, the set of Galton-Watson trees with offspring generating function of the form \eqref{eq:hyp1} form a \emph{universality class} in terms of their asymptotic sample coalescent structure.
\end{theorem}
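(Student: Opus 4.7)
The plan is to establish convergence of the finite-dimensional distributions of the rescaled partition-valued process $(\pi^{(k,T)}_{sT})_{s \in [0,1]}$ and then upgrade this to functional convergence. Since both pre-limit and limit are piecewise-constant, refinement-monotone processes with at most $k-1$ jumps, finite-dimensional convergence combined with tightness of the (at most $k-1$) jump locations is enough to conclude convergence in distribution in the appropriate partition-valued Skorokhod space.

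The core technical tool will be a many-to-$k$ spine decomposition. Conditional on $\{Z_T \geq k\}$, uniformly sampling $k$ distinct particles at time $T$ without replacement is equivalent, after normalisation by $\mathbb{P}(Z_T \geq k)$, to a size-biased weighting of the GW law by the falling factorial $Z_T(Z_T-1)\cdots(Z_T-k+1)$. Decomposing this weighting along the $k$ sampled lineages then yields an explicit integral expression for the joint law of the triple (skeleton topology, split times, sub-block assignment at each split) in terms of the PGF $f$, the branching rate $r$, and the survival function $q_t := 1 - F_t(0) = \mathbb{P}(Z_t > 0)$: each internal split of arity $r_j \geq 2$ at time $t_j$ contributes a factor built from $f^{(r_j)}(1 - q_{T-t_j})$, each subtree hanging off the spine but containing no sampled particle contributes a survival factor, and the exponential branching clocks along the spine give the usual $e^{-r \cdot (\text{length})}$ factors. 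The problem thus reduces entirely to the asymptotic analysis of these PGF-derived quantities under \eqref{eq:hyp1}.

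Next I would extract the required asymptotics. The backward equation $\partial_t F_t(0) = r(f(F_t(0)) - F_t(0))$ combined with the identity $f(1-y) - (1-y) = y^\alpha \ell(1/y)$ yields $\partial_t q_t = -r q_t^\alpha \ell(1/q_t)$, and a Karamata-type integration then gives the Slack asymptotic $q_T \sim ((\alpha-1) r T)^{-1/(\alpha-1)} \tilde\ell(T)$ together with $q_{sT}/q_T \to s^{-1/(\alpha-1)}$ for each $s \in (0,1]$. Similarly, Karamata-type arguments applied to the regular variation of $1-f$ yield $f^{(r_j)}(1-y)\, y^{r_j} \asymp y^\alpha \ell(1/y)$ for every $r_j \geq 2$ when $\alpha \in (1,2)$, whereas for $\alpha = 2$ the $r_j \geq 3$ contributions are of strictly smaller order. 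The crucial observation is that the slowly varying factor $\ell$ cancels from every ratio arising in the many-to-$k$ integral, and this cancellation is precisely the source of universality: only the exponent $\alpha$ survives in the limit.

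Finally, the limit density must be assembled and identified with $\nu^{(k,\alpha)}$. For $\alpha = 2$ only binary splits survive and one recovers \eqref{eq:binarysplits}. For $\alpha \in (1,2)$, multiple mergers of every arity $r_j \in \{2,\ldots,k\}$ contribute on equal footing, and each split corresponds to a \emph{giant birth} event in which one spine particle produces of order $1/q_{T-t_j}$ offspring, with the asymptotic joint distribution of the children's descendant counts at time $T$ governed by a Dirichlet-type law --- explaining the Lauricella and Dirichlet structures flagged in the abstract. I expect the main obstacle to be the clean separation of \emph{ordinary} branchings (which produce $O(1)$ offspring and affect only the spine clock statistics) from \emph{giant} births (which alone produce visible coalescences in the limit), and ruling out pathological scenarios such as two near-simultaneous giant births along distinct lineages, or an accumulation of medium-sized births conspiring to mimic a single giant birth. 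Making this rigorous will demand uniform-in-$T$ estimates via Potter's bounds on $\ell$, a careful truncation of the birth-size space, and combinatorial bookkeeping over the finitely many skeleton topologies with $k$ leaves.
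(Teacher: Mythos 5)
Your high-level strategy (spine decomposition followed by regular-variation asymptotics of the generating functions) is aligned with the paper's, and the Slack asymptotics and the regular variation of $f^{(g)}$ that you cite are correct. However, there are two concrete gaps that prevent the argument as outlined from getting off the ground.

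First, the $k$-fold size-biasing you propose as the ``core technical tool'' is ill-defined in the heavy-tailed regime. For $\alpha \in (1,2)$ the offspring law has $\mathbb{E}[L^2] = \infty$, hence $\mathbb{E}[Z_T^{(2)}] = \infty$ and a fortiori $\mathbb{E}[Z_T^{(k)}] = \infty$ for every $k \geq 2$ and $T>0$. The normalising constant of the measure obtained by weighting $\mathbb{P}$ by $Z_T(Z_T-1)\cdots(Z_T-k+1)$ therefore does not exist, and the spine change of measure you want to decompose along is not a probability measure. (Normalising by $\mathbb{P}(Z_T \geq k)$ does not help; it is the wrong constant and in any case also leaves the expectation infinite.) This is precisely why the paper introduces the exponentially discounted change of measure $\mathbb{Q}^{(k)}_{\theta,T}$ in \eqref{newprob}, weighting by $Z_T^{(k)} e^{-\theta Z_T}$: the discount makes every expectation finite while preserving the property \eqref{qk-form} that the spines are a uniform sample given $\mathcal{F}_T$. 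Without the $\theta$-discount (or a genuinely different route such as the reduced-tree argument of Lager\r{a}s--Sagitov, which the paper flags but does not pursue) the many-to-$k$ machinery simply is not available here; the truncation approach of \cite{HJR20} that you gesture at requires $k$-th moment bounds which fail for $\alpha < 2$.

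Second, the integral expression you describe --- a product over splits of $f^{(r_j)}(1-q_{T-t_j})$-type factors times ``survival factors'' for off-spine subtrees --- is structurally incomplete. Subtrees hanging off the spine are not conditioned to be extinct; they are conditioned to contain no sampled particle, and that event is correlated across subtrees through the total population size $Z_T$ (because the probability of avoiding any given subtree in a uniform sample depends on $Z_T$). To factorise the contributions one must first integrate out a Laplace-transform variable coupled to $Z_T$. This is visible already in the known $\alpha=2$ answer \eqref{eq:binarysplits}, which contains an essential outer integral $\int_0^\infty \cdots \ud\varphi$, and in the general limit laws \eqref{eq:vigil00}--\eqref{eq:vigilbin} via $\int_0^1 \cdots \ud w$; the paper arrives at it through the Gamma identity $z^{-k} = \frac{1}{(k-1)!}\int_0^\infty \varphi^{k-1} e^{-\varphi z}\,\ud \varphi$ applied to $1/Z_T^{k}$ in \eqref{punif4}. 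Your sketch contains no counterpart of this extra integration, so the ``explicit integral expression'' you promise would not in fact factorise as claimed and would not be correct as stated. I would also note that the sentence at the end acknowledging the need for Potter bounds and truncation is aimed at the wrong difficulty: the technical obstructions you list are real but secondary, whereas the two items above are what the discounting mechanism is specifically designed to remove.
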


We remark that, due to the time scaling in $T$, the branching rate parameter $r$ plays no role in Theorem \ref{thm:A}.
We will describe the complete structure of these universal partition processes $(\nu^{(k,\alpha)}_s)_{s \in [0,1]}$ (which can also be thought of as coalescent trees) in Theorem \ref{thm:B}.

Before discussing distributional properties of the stochastic process $\nu^{(k,\alpha)} := (\nu_s^{(k,\alpha)})_{s \in [0,1]}$ for general $\alpha$, we take a moment to elucidate further on the case $\alpha = 2$. We will see shortly (c.f.\ the case $\alpha = 2$ of Theorem \ref{thm:B}) that the stochastic process $\nu^{(k,2)}$ coincides with the one mentioned above 
from the work by Harris et al.\ \cite{HJR20}, in particular, that $\nu^{(k,2)}$ has binary splits, and the joint distribution of the $k-1$ splits is given by \eqref{eq:binarysplits}. 
As such, the case $\alpha=2$ of Theorem \ref{thm:A} implies the critical case in Harris et al. \cite{HJR20} (Theorem 3), namely that $\nu^{(k,2)}$ is a limiting object for the coalescent structure of critical trees with finite variance.

In fact, as touched on above, the class of offspring distributions whose moment generating functions of the form \eqref{eq:hyp1} 
with $\alpha=2$ is broader than critical distributions with finite variance. Indeed, there are cases when $\alpha=2$ but the variance is infinite; an explicit example is the offspring distribution with moment generation function
\begin{equation}\label{a2infinite}
f(s)=s+(1-s)^2\left(\frac{1}{2}+\frac{1}{4}\log\left(\frac{1}{1-s}\right)\right).
\end{equation}
See Slack \cite{Slack68} for further details.  As such, our special case $\alpha = 2$ of Theorems \ref{thm:A} 
\& \ref{thm:B} represents a slight generalisation of the critical finite variance case found in \cite{HJR20}.
Essentially, we are only able to extend to infinite variance offspring cases in the present paper by introducing discounting of the total population size into various spine changes of measure whilst, perhaps somewhat surprisingly, simultaneously being able to preserve key properties as well as understanding their more complex structures. 
This novel approach was not featured in \cite{HJR20}, where instead some $k^{th}$ moment assumptions were needed combined with truncation approximation argument. 
An analogous method of spine changes of measure with discounting has been concurrently developed by Harris, Palau, and Pardo \cite{HPP} for the (significantly different) setting of a discrete time critical Galton-Watson in varying environment with finite variances.

We now turn to describing the processes $\nu^{(k,\alpha)}$ for $\alpha \in (1,2)$, which are more complicated than their $\alpha = 2$ counterpart. In the $\alpha \in (1,2)$ setting, the blocks of the stochastic process $(\nu^{(k,\alpha)}_s)_{s \in [0,1]}$ may break into three or more sub-blocks at any splitting event, and consequently we need to take care to describe the topology of the process.

Let $\nu := (\nu_s)_{s \in [0,1]}$
be a stochastic process taking values in the set of partitions of $\{1,\ldots,k\}$ with the property that $\nu_0$ is one block, $\nu_{1}$ is singletons, and each discontinuity of $\nu$ is a block breaking into several sub-blocks. 
Write $0=:\tau_0 < \tau_1 < \ldots < \tau_m < 1$ for the splitting times (i.e.\ times of discontinuity) of $\nu$. The \emph{topology} $\mathcal{T}(\nu)$ of $\nu$ is the sequence of partitions
\begin{align*}
\mathcal{T}(\nu) := (\mathcal{T}_0,\ldots,\mathcal{T}_m) \qquad \textrm{with}\quad\mathcal{T}_i := \nu_{\tau_i}.
\end{align*}
The resulting sequence $(\mathcal{T}_0,\ldots,\mathcal{T}_m)$ is what we call a \emph{splitting sequence} of $\{1,\ldots,k\}$. A splitting sequence is a collection of partitions $(\beta_0,\ldots,\beta_m)$ of $\{1,\ldots,k\}$ such that $\beta_0$ is one block, $\beta_m$ is the singletons, and each $\beta_{i+1}$ is obtained from $\beta_i$ by breaking a single block of $\beta_i$ into two or more sub-blocks. 

Given a splitting sequence $(\beta_0,\ldots,\beta_m)$, we define the $i^{\text{th}}$ split size, $g_i$, to be the number of new blocks created at the $i^{\text{th}}$ split time, that is,
\begin{align*}
g_i := \# \beta_i - \# \beta_{i-1} +1  \qquad i =1,\ldots,m
\end{align*}
where $\# \beta_i$ is the number of blocks in the partition $\beta_i$. Since $\beta_0$ contains $1$ block, and $\beta_m$ contains $k$ blocks, $k-1$ blocks are created across the entire sequence and as such we have
\begin{align*}
\sum_{i =1}^m (g_i - 1) = k-1.
\end{align*}
With this notation at hand, we are now ready to state our second main result, characterising the law of the limit processes $(\nu^{(k,\alpha)}_s)_{s \in [0,1]}$ occuring in Theorem \ref{thm:A}.

\begin{theorem} \label{thm:B}
(a) For $\alpha \in (1,2)$, the probability law $\mathbb{P}^{(k,\alpha)}$ of $\nu^{(k,\alpha)}$ is given by the formula
\begin{align} \label{eq:vigil00}
\mathbb{P}^{(k,\alpha)}&\left( \mathcal{T}(\nu^{(k,\alpha)}) = \overline{\beta} , \tau_1 \in \mathrm{d}t_1,\ldots,\tau_m \in \mathrm{d}t_m\right) \nonumber \\
&\hspace{2cm}=  \frac{1}{ (\alpha-1) (k-1)!} .\,
\prod_{i=1}^m  \frac{\alpha \Gamma(g_i-\alpha)}{\Gamma(2-\alpha)}.\,
 \int_0^1 (1-w)^{k-1} w^{m + \frac{2-\alpha}{\alpha-1} } \prod_{i=1}^m  (1 - w t_i )^{-g_i} \mathrm{d}t_i\,  \mathrm{d}w,
\end{align}
where $m \geq 1$, $\overline{\beta} = (\beta_0,\ldots,\beta_m)$ is any splitting sequence for $\{1,\ldots,k\}$ with split sizes $g_1,\ldots,g_m$, and $0 < t_1 < \ldots < t_m < 1$ are splitting times. 

\noindent
(b) For $\alpha = 2$, here we have $\mathbb{P}^{(k,2)}( m = k-1, g_i = 2 ~\forall i ) = 1$, and we have
\begin{align} \label{eq:vigilbin}
\mathbb{P}^{(k,2)} &\left( \mathcal{T}(\nu^{(k,2)}) = \bar{P} , \tau_1 \in \mathrm{d}t_1,\ldots,\tau_{k-1} \in \mathrm{d}t_{k-1}\right) \nonumber \\
&\hspace{3cm}=  \frac{2^{k-1}}{ (k-1)!} \int_0^1 (1-w)^{k-1} w^{k-1} \prod_{i=1}^{k-1}  (1 - w t_i )^{-2} \mathrm{d}t_i\,   \mathrm{d}w.
\end{align}
where $\bar{P}$ is any splitting sequence of $\{1,\ldots,k\}$ consisting only of binary splits.
\end{theorem}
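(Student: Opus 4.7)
The plan is to compute the joint density of the topology and splitting times of the pre-limit process $\pi^{(k,T)}$ via a $k$-spine change of measure, and then pass to the limit $T \to \infty$ using Theorem \ref{thm:A} together with the regular variation hypothesis \eqref{eq:hyp1}.

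First, I would set up a size-biased $k$-spine decomposition of the Galton-Watson tree. Under an appropriately tilted measure, one distinguishes $k$ marked lineages whose Radon-Nikodym density with respect to the original law is proportional to the falling factorial $Z_T(Z_T-1)\cdots(Z_T-k+1)$, which counts ordered $k$-tuples of distinct particles alive at time $T$. A many-to-few formula then expresses the joint law of the topology and splitting times of $\pi^{(k,T)}$ as an expectation of an explicit functional under this $k$-spine measure. Along the spines, the tilted dynamics become: at each coalescent event where a block of size $g$ breaks into sub-blocks, the corresponding offspring variable $L$ is size-biased by $\binom{L}{g}$, so that the spines can be assigned to $g$ distinct children; between such events, the non-spine particles evolve as independent unbiased critical Galton-Watson subtrees.

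For the limit $T \to \infty$, the factor $\alpha \Gamma(g_i - \alpha)/\Gamma(2-\alpha)$ emerges from the regularly varying tail of $L$ induced by \eqref{eq:hyp1}: upon differentiating the PGF $g_i$ times one finds that the size-biased factorial moment of $L$ is regularly varying with this gamma factor as the leading constant, the $(\alpha-1)^{-1}$ prefactor similarly arising from the Yaglom/Slack survival asymptotics. The mixing integral over $w \in [0,1]$ arises from the asymptotic distribution of the renormalized population size $Z_T$ conditional on survival (an explicit Mittag-Leffler type law via Slack's theorem), together with the joint laws of the surviving subtrees emanating from each spine-coalescent event. The interplay between these ingredients produces the Beta-type density $(1-w)^{k-1} w^{m + (2-\alpha)/(\alpha-1)}$ and the Lauricella-type product $\prod_{i=1}^m (1-wt_i)^{-g_i}$; the appearance of these specific functions, and the connection to the Dirichlet distribution flagged in the abstract, is precisely what falls out of the spine computation after rescaling time by $1/T$.

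Part (b) for $\alpha = 2$ follows as a limit of part (a): since $\Gamma(2-\alpha) \to \infty$ while $\Gamma(g_i - \alpha)$ stays finite for $g_i \geq 3$, only binary splits ($g_i = 2$, with $m = k-1$) contribute in the limit, and the ratio $\alpha \Gamma(g_i - \alpha)/\Gamma(2-\alpha)$ there yields the coefficient $2$ per split, producing $2^{k-1}$ in \eqref{eq:vigilbin}. The main technical obstacle is ensuring the $k$-spine change of measure remains well defined uniformly in $T$, particularly when $\alpha = 2$ and the variance is infinite, as in \eqref{a2infinite}; this is where standard spine arguments break down. The novel device the authors mention, namely introducing a discounting of the total population size into the change of measure (not present in \cite{HJR20}), is expected to resolve this: it keeps the tilted measure bounded while preserving the martingale structure and the many-to-few identity needed for the finite-$T$ computation, which can then be passed through regular variation Tauberian arguments to recover the explicit formulas \eqref{eq:vigil00} and \eqref{eq:vigilbin}.
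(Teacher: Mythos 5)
Your proposal captures the paper's architecture essentially correctly: a $k$-spine change of measure with $k$-fold size biasing, discounting by $e^{-\theta Z_T}$ to keep the tilted measure normalizable without higher moment assumptions, a many-to-few computation of the joint law of topology and split times under the tilted measure (this is Lemma~\ref{lem:Qmain}), and then regular-variation/Tauberian asymptotics to pass $T\to\infty$. You also correctly identify the crucial technical point — that in the heavy-tailed regime $\mathbb{E}[Z_T^{(k)}]$ is not finite for $k\geq 2$, so the falling-factorial tilt alone is unnormalizable and the $\theta$-discounting is what rescues the construction.

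Two places where you are vaguer than the paper and would need to say more to actually produce \eqref{eq:vigil00}. First, the provenance of the mixing variable $w$: the paper inverts the size biasing not by appealing abstractly to the limiting population-size law but by the explicit Gamma integral $1/z^k = \frac{1}{(k-1)!}\int_0^\infty \varphi^{k-1}e^{-\varphi z}\,\mathrm{d}\varphi$, producing an integral in a parameter $\varphi$ that then gets combined with the discounting parameter $\theta$ (set $\theta = \varphi$, or rather take the joint Laplace transform in Lemma~\ref{lem:joint}), followed by the algebraic substitution $w = (1+\varphi)^{1-\alpha}$ which is what makes the Beta and Lauricella structure visible. Saying the $w$-integral "arises from the asymptotic distribution of the renormalized population size" is not wrong but elides this concrete device. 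Second, for part (b): the paper does \emph{not} obtain the $\alpha = 2$ formula as a limit $\alpha\uparrow 2$ of part (a); rather, the asymptotic lemmas (e.g.\ Lemmas~\ref{lem:palm} and~\ref{lem:cebolla}) are stated to cover $g=2,\alpha=2$ directly, so the same computation runs through for $\alpha=2$ with $g_i\equiv 2$. Deriving \eqref{eq:vigilbin} by sending $\alpha\uparrow 2$ in \eqref{eq:vigil00} is heuristically illuminating — and the paper does remark that $\alpha\Gamma(g-\alpha)/\Gamma(2-\alpha)$ concentrates on $g=2$ in this limit — but as a proof it would require justifying an interchange of limits in $T$ and $\alpha$, which is precisely the kind of extra work the direct treatment avoids.
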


Let us make a few remarks. First we note that in the setting of Theorem \ref{thm:B}, after fixing the split sizes $g_1,\ldots,g_m$, the topology $\mathcal{T}(\nu^{(k,\alpha)})$ has no effect on the formula in \eqref{eq:vigil00}. As such, conditional on the event that $\nu^{(k,\alpha)}$ has $m$ splits at times $t_1,\ldots,t_m$ of sizes $g_1,\ldots,g_m$, the topology of the process is uniformly distributed on the set of possible splitting sequences with $m$ splits of sizes $g_1,\ldots,g_m$. 
This property may be regarded as a generalisation of the Kingman topology found in the case $\alpha =2$: 
going backwards in time, whenever a merger of $g$ ancestral lines is about to occur, it is equally likely to consist of any sub-collection of $g$ existing ancestral lines.

Next let us note that the $\alpha=2$ formula \eqref{eq:vigilbin} 
is consistent with the formula for $\alpha \in (1,2)$ in a limit when $\alpha \uparrow 2$. 
The fact that the formula is asymptotically supported on $\{g_1=2,\ldots,g_m=2\}$ when $\alpha = 2$ corresponds to the fact that ${\alpha \Gamma(g-\alpha)}/{\Gamma(2-\alpha)} \rightarrow \mathbf{1}_{\{g = 2\}}$ as $\alpha \uparrow 2$.
 
Additionally, we observe that after taking the change of variable $w = (1+\varphi)^{-2}$ and accounting for the $k!(k-1)!/2^{k-1}$ different binary trees with $k$ labelled leaves and $k-1$ ranked internal nodes, we can verify that \eqref{eq:vigilbin} is equivalent to the formula from \eqref{eq:binarysplits}. 

Recall that a splitting sequence $(\beta_0,\ldots,\beta_m)$ is a sequence of partitions of $\{1,\ldots,k\}$ such that $\beta_0$ is one block, $\beta_m$ is the singletons, and for each $i \geq 1$, $\beta_i$ is obtained from $\beta_{i-1}$ by breaking a single block of $\beta_{i-1}$ into $g_i \geq 2$ sub-blocks. 
Note that $\beta_i$ has $g_i - 1$ more blocks than $\beta_{i-1}$, so that 
\[\# \beta_j = 1 + \sum_{i=1}^j (g_i-1) =: k_j.\] 
In particular, $k_0 = 1$ and $k_m = k$. We now observe that
\begin{align} \label{eq:numbersplitting}
\# \{ \text{Splitting sequences of $\{1,\ldots,k\}$ with ordered split sizes $(g_1,\ldots,g_m)$} \} =k! \prod_{i=1}^m \frac{k_i}{g_i!}.
\end{align}
To see \eqref{eq:numbersplitting}, it is best to work backwards. Let $(g_1',\ldots,g_m')$ denote the splitting numbers listed in reverse order, and let $(k_0',\ldots,k_m')$ denote $(k_0,\ldots,k_m)$ listed in reverse order. Then going from $\beta_{m-i+1}$ to $\beta_{m-i}$, there are $\binom{k_i'}{g_i'}$ ways of choosing $g_i'$ blocks of the $k_i'$ blocks in $\beta_{m-i+1}$ and merging them to form a single block. Since every splitting process $(\beta_0,\ldots,\beta_m)$ arises in exactly this way, it follows that 
\begin{align} \label{eq:numbersplitting2}
\# \{ \text{Splitting sequences of $\{1,\ldots,k\}$} \} = \prod_{i=1}^m \binom{k_i'}{g_i'},
\end{align}
and then equation \eqref{eq:numbersplitting} follows by noting that $k_{i+1}' = k'_i - g'_i +1$ and rearranging.

Finally, we remark that equation \eqref{eq:numbersplitting} now supplies a way of describing the marginal distribution of the split times and split sizes, without any other regard for the topology of the process. Namely, by multiplying \eqref{eq:vigil00} by the number $k! \prod_{j=1}^m \frac{k_j}{g_j!}$ of splitting sequences of partitions with order split sizes $(g_1,\ldots,g_m)$, we see that the probability density of splits of ordered sizes $(g_1,\ldots,g_m)$ in times $\mathrm{d}t_1,\ldots,\mathrm{d}t_m$ is given by 
\begin{align*}
&\mathbb{P}^{(k,\alpha)}\left( \text{Splits of sizes $(g_1,\ldots,g_m)$ at times $\mathrm{d}t_1,\ldots,\mathrm{d}t_m$} \right) \nonumber \\
&\hspace{.5cm}= \frac{k }{ (\alpha-1) } .\,
\prod_{i=1}^m  \frac{\alpha  k_i\Gamma(g_i-\alpha)}{ g_i! \Gamma(2-\alpha) }  .\,
 \int_0^1 (1-w)^{k-1} w^{m + \frac{2-\alpha}{\alpha-1} } \prod_{i=1}^m  (1 - w t_i )^{-g_i}  \mathrm{d}t_i\,  \mathrm{d}w,
\end{align*}
for all $0 < t_1 < \ldots < t_m < 1$, where, as above, for $j = 1,\ldots,m$, $k_j :=1  + \sum_{i=1}^j (g_i-1)$ is the number of blocks after the $j^{\text{th}}$ split. 

\subsection{The Lauricella representation}
It turns out that it is possible to describe the distribution of the partition process $\nu^{(k,\alpha)}$ in terms of a certain special function known as the \emph{Lauricella} hypergeometric function \cite{CW}. The Lauricella function is given by 
\begin{equation} \label{eq:ocosto0}
F^{(n)}_D (c,a;b;t)=\int_{\mathbb{T}_n}\Big(1-\langle t,x\rangle\Big)^{-c} D_n(a; b;x)\ud x \qquad a \in \mathbb{R}_{>0}^n, b >0 , c >0,
\end{equation}
where $\langle t,x \rangle := \sum_{i=1}^n t_i x_i$, and
\[
D_n(a;b;x)=\frac{\Gamma(b+\sum_{i=1}^n a_i)}{\Gamma(b)\Gamma(\sum_{i=1}^n a_i)}. \left(1-\sum_{i=1}^n x_i\right)^{b-1}\prod_{i=1}^n x_{i}^{a_i-1}
\]
is the density of the Dirichlet distribution with parameters $(a,b) = (a_1,\ldots,a_n,b)$. In Section \ref{sec:lauricellaproof} we show that  equation \eqref{eq:vigil00} describing the law of $\nu^{(k,\alpha)}$ may alternatively be written
\begin{align} \label{eq:vigil_laur}
&\mathbb{P}^{(k,\alpha)}\left( \mathcal{T}(\nu) = \overline{\beta} , \tau_1 \in \mathrm{d}t_1,\ldots,\tau_m \in \mathrm{d}t_m\right) \nonumber \\
&=  \frac{1}{ (\alpha-1) }  \prod_{i=1}^m  \frac{\alpha \Gamma(g_i-\alpha)}{\Gamma(2-\alpha)}  \frac{\Gamma(m+\frac{1}{\alpha-1})}{\Gamma(k+m+\frac{1}{\alpha-1})}F^{(m)}_D\left[m+\frac{1}{\alpha-1}, g_1, \ldots, g_m; k+m+\frac{1}{\alpha-1}; t_1, \ldots, t_m\right].
\end{align}

\subsection{Population size and giant birth events}
In addition to describing the joint ancestral structure of $k$ uniformly sampled particles chosen at a large time $T$ from the population of a branching process in the \eqref{eq:hyp1} universality class, in the setting $\alpha \in (1,2)$ it transpires that there are also giant birth events occurring in conjunction with the split times. 

To explain this connection, let us begin by noting that in Section \ref{sec:gen}, using tools from Pakes \cite{Pakes10}, we undertake a careful analysis of the generating functions of branching trees whose offspring generating functions lies in the universality class \eqref{eq:hyp1}. Ultimately, we show that the survival probability takes the form
\begin{align*}
\overline{F}(T) := \mathbb{P}( Z_T > 0 ) \sim T^{-1/(\alpha-1)} \tilde{\ell}(T),
\end{align*}
where $\tilde{\ell}(\cdot)$ is another function slowly varying at infinity that may be described explicitly in terms of the slowly varying function $\ell(\cdot)$ occuring in \eqref{eq:hyp1}. See \eqref{eq:tchaik} below.

Consider now that the criticality $\mathbb{E}[L]=1$ of the offspring distribution entails that the expected number of particles alive at time $T$ satisfies $\mathbb{E}[Z_T]=1$ for all $T$. This implies that $\mathbb{E}[Z_T | Z_T > 0 ] = \overline{F}(T)^{-1}$, and as such we may expect that conditional on the event $\{Z_T > 0\}$, the number of particles alive has the order $\overline{F}(T)^{-1}$. Indeed, the following limit due to Pakes \cite{Pakes10} states that conditionally on $\{Z_T > 0 \}$, $\overline{F}(T) Z_T$ converges to a limit as $T$ gets large, with
\begin{align} \label{eq:painter}
\lim_{T \to \infty} \mathbb{E}[e^{ - \theta \overline{F}(T)Z_T } | Z_T > 0 ] = 1 - (1 + \theta^{1-\alpha})^{ - 1/(\alpha-1)}.
\end{align}

Recall the process $(\pi^{(k,T)}_{sT})_{s \in [0,1]}$ characterising the joint ancestral structure of a sample of $k$ particles from the population at time $T$, and that we denote by $\tau_1<\ldots<\tau_m$ the discontinuities, or splitting times, of this process. According to Theorem \ref{thm:C}, which we state shortly, when $T$ is large, the splitting times $\tau_1 < \ldots < \tau_m$ coincide with seismic birth events of size of the same order of magnitude as the entire population. To formulate this observation precisely, let
\begin{align*}
L_i := \text{Number of individuals born at time $\tau_i$,} \qquad i=1,\ldots,m.
\end{align*}
Let $(\pi^{k,T}_{s T} )_{s \in [0,1]}$ be the rescaled partition process characterising the joint ancestry of $k$ particles sampled from the population at time $T$. Our next result is an extension of Theorem \ref{thm:B}.

\begin{theorem} \label{thm:C}
Let $\alpha \in (1,2)$. Conditioned on the event $\{Z_T \geq k \}$, as $T \to \infty$,  we have the convergence in distribution
\begin{align*}
((\pi^{k,T}_{s T} )_{s \in [0,1]}, \overline{F}(T)L_1,\ldots,\overline{F}(T)L_m) \to ((\nu^{(k,\alpha)}_s)_{s \in [0,1]}, X_1,\ldots,X_m),
\end{align*}
of the time-rescaled partition process $(\pi^{k,T}_{sT})_{s \in [0,1]}$ together with the $\overline{F}(T)$-rescaled offspring sizes at the split times, where the limiting object $((\nu^{(k,\alpha)}_s)_{s \in [0,1]}, X_1,\ldots,X_m)$ is a partition process $(\nu^{(k,\alpha)}_s)_{s \in [0,1]}$ together with a random vector $(X_1,\ldots,X_m)$ of non-negative random variables, where $m$ is the (random) number of splitting events of $\nu^{k,\alpha}$.

Further, the joint law of $((\nu^{(k,\alpha)}_s)_{s \in [0,1]}, X_1,\ldots,X_m)$ is given by 
\begin{align} \label{eq:vigil01}
&\mathbb{P}^{(k, \alpha)}\left( \mathcal{T}(\nu) = \overline{\beta} , \tau_1 \in \mathrm{d}t_1,\ldots,\tau_m \in \mathrm{d}t_m, X_1 \in \mathrm{d}x_1,\ldots, X_m \in \mathrm{d}x_m \right) \nonumber \\
&=  \frac{1}{ (\alpha-1) (k-1)!}  \prod_{i=1}^m  \frac{\alpha \Gamma(g_i-\alpha)}{\Gamma(2-\alpha)}   \int_0^1 (1-w)^{k-1} w^{m + \frac{2-\alpha}{\alpha-1} } \prod_{i=1}^m  (1 - w t_i )^{-g_i} \mathrm{d}t_i ~  \Delta_{g_i,t_i}^w(x_i) \mathrm{d}x_i  ~  \mathrm{d}w.
\end{align}
where  $\overline{\beta} = (\beta_0,\ldots,\beta_m)$  is any splitting sequence for $\{1,\ldots,k\}$ with $m \geq 1$ splits having split sizes $g_1,\ldots,g_m$, $0 < t_1 < \ldots < t_m < 1$ are split times, $x_1,\dots,x_m\geq 0$ are split offspring sizes, and where 
\begin{align} \label{eq:Delta4}
\Delta^w_{g,t}(x) := \frac{ x^{g-\alpha-1} }{ \Gamma(g-\alpha) (1/w -t )^{\frac{g-\alpha}{\alpha-1}}}  \exp \left\{  - \frac{x}{(1/w-t)^{\frac{1}{\alpha-1}} } \right\}
\end{align}
is the probability density function of a standard Gamma random variable with shape parameter $g-\alpha$ and rate parameter $(w/(1-wt))^{\frac{1}{\alpha-1}}$. 
\end{theorem}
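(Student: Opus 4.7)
The strategy is to refine the spine argument used to establish Theorems \ref{thm:A} and \ref{thm:B}, retaining the offspring size $L_i$ at each coalescence event in the bookkeeping rather than summing over it. Under the $k$-spine change of measure (augmented, in the infinite-variance cases highlighted in the discussion around \eqref{a2infinite}, by an Esscher-type discount $e^{-\theta Z_T}$), the event that the ancestral process has topology $\overline{\beta}$ with split times in $\mathrm{d}t_i$ and offspring sizes $L_i = n_i$ factorises over the splits. At the $i^{\text{th}}$ split a common ancestor produces $n_i$ children, of which $g_i$ distinguished children must continue to carry the spines forward while the remaining $n_i - g_i$ must leave no descendants in the sample, contributing a weight
\begin{equation*}
\binom{n_i}{g_i}\,\mathbb{P}(L = n_i)\,\overline{F}_\theta(T - t_i T)^{g_i}\,\bigl(1 - \overline{F}_\theta(T - t_i T)\bigr)^{n_i - g_i},
\end{equation*}
where $\overline{F}_\theta$ denotes the $\theta$-tilted analogue of $\overline{F}$.

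\medskip

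The crux of the argument is the large-$T$ limit of this weight under the substitution $n_i = x_i/\overline{F}(T)$. Condition \eqref{eq:hyp1} together with standard Tauberian results for regularly varying generating functions, as exploited in Section \ref{sec:gen}, yields a pure power tail $\mathbb{P}(L = n) \sim c_\alpha n^{-1-\alpha} \ell(n)$ (in an appropriately uniform sense), while the asymptotic $\overline{F}(T) \sim T^{-1/(\alpha-1)} \tilde{\ell}(T)$ recorded before \eqref{eq:painter} gives, by regular variation, $\overline{F}(T - t T)/\overline{F}(T) \to (1 - t)^{-1/(\alpha-1)}$. Writing $w = (1 + \theta^{1-\alpha})^{-1}$, the same computation applied to the $\theta$-tilted measure replaces this ratio by $\bigl((1 - w t)/w\bigr)^{-1/(\alpha-1)}$. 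Substituting these asymptotics, the factor $\binom{n_i}{g_i} \mathbb{P}(L = n_i)$ scales as $x_i^{g_i - 1 - \alpha}$ times an explicit $\Gamma$-constant; the factor $\overline{F}_\theta(T - t_i T)^{g_i}$ produces the polynomial $(1 - w t_i)^{-g_i}$; and the factor $(1 - \overline{F}_\theta(T - t_i T))^{n_i - g_i}$ linearises to the exponential $\exp\bigl(-x_i (w/(1 - w t_i))^{1/(\alpha - 1)}\bigr)$. Collecting these factors and invoking the identity $\Gamma(2-\alpha) = \alpha(\alpha-1)\Gamma(-\alpha)$ together with the recursion $\Gamma(z+1) = z\Gamma(z)$ iterated from $\Gamma(-\alpha)$ up to $\Gamma(g_i - \alpha)$, the $i^{\text{th}}$ split contributes precisely $\alpha \Gamma(g_i - \alpha)/\Gamma(2-\alpha) \cdot (1 - w t_i)^{-g_i} \cdot \Delta^w_{g_i, t_i}(x_i)\, \mathrm{d}x_i$, matching the integrand of \eqref{eq:vigil01}. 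A final integration against the Esscher density of $\theta$, which corresponds to the limiting population distribution of \eqref{eq:painter} rewritten as a mixture in $w$, reproduces the outer $\int_0^1 (1-w)^{k-1} w^{m + (2-\alpha)/(\alpha-1)}\, \mathrm{d}w$.

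\medskip

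The chief technical obstacle is justifying the interchange of the $T \to \infty$ limit with the summation over the offspring counts $n_i$, since the heavy tails in the regime $\alpha \in (1, 2)$ make the contribution from very large $n_i$ non-negligible. Uniform control is provided by Potter bounds on the slowly varying function $\ell(\cdot)$ in \eqref{eq:hyp1}, combined with the exponential decay from $(1 - \overline{F}_\theta(T - t_i T))^{n_i - g_i}$ and the additional integrability supplied by the discount $e^{-\theta Z_T}$; this is precisely the mechanism that was introduced, as highlighted after Theorem \ref{thm:B}, to extend the method beyond the finite-variance regime. These uniform bounds simultaneously deliver the tightness of $(\overline{F}(T) L_i)_{i=1}^m$ and allow the pointwise convergence of densities at fixed topology and split-time coordinates to be upgraded to joint weak convergence of the partition-and-offspring-size vector, completing the proof of \eqref{eq:vigil01}.
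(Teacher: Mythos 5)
Your sketch follows the paper's general strategy — a $k$-spine change of measure with exponential $\theta$-discounting, Tauberian asymptotics under (H1), and a final integral to invert the size-biasing — but two of the steps as written contain errors that break the argument precisely in the regime $\alpha \in (1,2)$ that Theorem~\ref{thm:C} is about.

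First, the per-split weight you propose,
\[
\binom{n_i}{g_i}\,\mathbb{P}(L=n_i)\,\overline{F}_\theta(T-t_iT)^{g_i}\,\bigl(1-\overline{F}_\theta(T-t_iT)\bigr)^{n_i-g_i},
\]
is not what the $k$-spine change of measure produces. Under $\mathbb{Q}_{\theta,T}^{(k)}$, the joint density of split times, topology, and offspring sizes factorises as in Lemma~\ref{lem:Qmain}: the per-split contribution is $\ell_i^{(g_i)}\bigl(\mathbb{E}[e^{-\theta Z_{T-t_i}}]\bigr)^{\ell_i-g_i}p_{\ell_i}\,F'_{T-t_i}(e^{-\theta})^{g_i-1}$, with a factor $F'$ for each of the $g_i-1$ \emph{new} lineages created at the split, not a survival probability $\overline{F}_\theta$ for each of the $g_i$ continuing lineages. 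These are not asymptotically interchangeable: by \eqref{eq:plantain}, $\overline{F}_\theta(T(1-t)) \sim \overline{F}(T)(1-t+\theta^{1-\alpha})^{-1/(\alpha-1)}$, so your $\overline{F}_\theta^{g}$ scales like $(1-t+\theta^{1-\alpha})^{-g/(\alpha-1)}$; combined with the $\lambda_T^{\alpha-g}$ that comes out of summing over offspring (Lemma~\ref{lem:cebolla}), the per-split $(1-t+\theta^{1-\alpha})$-exponent you obtain is $-\alpha/(\alpha-1)$, independent of $g$. The correct exponent, matching $(1-wt_i)^{-g_i}$ in \eqref{eq:vigil01}, is $-g_i$ (which the paper obtains because $F'_{T-t}(e^{-\theta\overline{F}(T)})^{g-1} \sim \theta^{-\alpha(g-1)}(1-t+\theta^{1-\alpha})^{-\alpha(g-1)/(\alpha-1)}$, and $(\alpha-g)/(\alpha-1)+\alpha(g-1)/(\alpha-1)=g$). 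The two agree only when $\alpha=2$, which is excluded from Theorem~\ref{thm:C}. The discrepancy is $g_i$- and $t_i$-dependent, so it cannot be absorbed into the overall normalisation.

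Second, the pointwise offspring asymptotic $\mathbb{P}(L=n)\sim c_\alpha n^{-1-\alpha}\ell(n)$ that you use to derive the density $\Delta^w_{g,t}$ does not follow from (H1). Hypothesis (H1) controls the generating function; Karamata's Tauberian theorem plus the monotone density theorem yield $\overline{\overline p}_k$ and then $\overline p_k\sim Q_\alpha k^{-\alpha}\ell(k)$, but the increments $p_k=\overline p_{k-1}-\overline p_k$ of a regularly varying tail need not themselves be regularly varying, so the claimed pointwise asymptotic may fail. The paper avoids this entirely by working with the factorial moment generating function $\mathbb{E}[L^{(g)}s^{L-g}]=f^{(g)}(s)$ (Lemma~\ref{lem:palm}, via the monotone density theorem applied to $f$ itself), and obtains the limiting law of $\overline{F}(T)L_{\tau_i}$ via its Laplace transform (Lemma~\ref{lem:splitsize} and Corollary~\ref{cor:splitsize}) rather than via a local limit for $p_n$. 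Finally, the "integration against the Esscher density of $\theta$" in your last step should be made precise: what the paper actually does is use the Gamma-integral representation $1/z^k=\tfrac{1}{(k-1)!}\int_0^\infty\varphi^{k-1}e^{-\varphi z}\,\mathrm{d}\varphi$ to undo the $Z_T^{(k)}$ size-biasing, combine it with the conditional Laplace transform of $Z_T$ given the ancestral tree (Lemma~\ref{lem:kiwi}), and only then substitute $w=(1+\varphi)^{1-\alpha}$ to get the $\int_0^1(1-w)^{k-1}w^{m+\frac{2-\alpha}{\alpha-1}}\,\mathrm{d}w$ mixture.
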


Let us take a moment to unpack Theorem \ref{thm:C}. First we note that \eqref{eq:vigil01} is an immediate generalisation of \eqref{eq:vigil00}; indeed, integrating through the variables $x_1,\ldots,x_m$ of the former we immediately obtain the latter. Let us comment further that the variable $w$ is like an (unobservable) mixture random variable that acts as a proxy for the entire population size, and after fixing the value of $w$, the topology and split times have a joint law proportional to $\prod_{i=1}^m \frac{ \Gamma(g_i-\alpha)}{\Gamma(2-\alpha)} (1 - w t_i )^{-g_i} $. After choosing $w$, the topology and the split sizes $g_1,\ldots,g_m$, the offspring events are conditionally gamma distributed with shape parameter $g-\alpha$ and scale factor $(1/w-t)^{\frac{1}{\alpha-1}}$ (equivalently, rate parameter 
$w^{\frac{1}{\alpha-1}}/(1-wt)^{\frac{1}{\alpha-1}}$).

Let us touch on the interpretation of Theorem \ref{thm:C} as $\alpha \uparrow 2$. We recall from our discussion following the statement of Theorem \ref{thm:B} that $\nu^{(k,2)}$ only has binary splitting events, i.e.\ when $\alpha=2$, each $g_i = 2$. In the case $g = 2$, we can interpret the probability measure $\Delta^w_{g,t}(x)\mathrm{d}x$ as approximating the Dirac mass at zero as $\alpha \uparrow 2$. This captures the vanishing of giant birth events as $\alpha \uparrow 2$.

Finally, let us note that using the Laplace transform of the gamma distribution, we have 
\[
\int_0^\infty e^{ - \gamma x} \Delta_{g,t}^w(x) \mathrm{d}x 
= \left(\frac{1}{1 - (1/w-t)^{\frac{1}{\alpha-1}}\gamma}\right)^{g-\alpha}
\qquad (\gamma \geq 0).
\] 
Consequently, for non-negative parameters $\gamma_1,\ldots,\gamma_m$ we may alternatively write 
\begin{align} \label{eq:vigil_laplace}
\mathbb{E}^{(k,\alpha)}&\left[ e^{ - \sum_{j =1}^m \gamma_m X_m };  \, \mathcal{T}(\nu) = \overline{\beta} , \tau_1 \in \mathrm{d}t_1,\ldots,\tau_m \in \mathrm{d}t_m \right] \nonumber \\
&=  \frac{1}{ (\alpha-1) (k-1)!}  \prod_{i=1}^m  \frac{\alpha \Gamma(g_i-\alpha)}{\Gamma(2-\alpha)}   \int_0^1 (1-w)^{k-1} w^{m + \frac{2-\alpha}{\alpha-1} } \prod_{i=1}^m  \frac{ (1 - w t_i )^{-g_i} }{\left( 1 - (1/w-t_i)^{\frac{1}{\alpha-1}}\gamma_i \right)^{g_i-\alpha} }  ~  \mathrm{d}w.
\end{align}

\subsection{Outline of proofs}
Let us outline briefly on our approach to proving Theorems \ref{thm:A}, \ref{thm:B} and \ref{thm:C}. We begin by introducing a collection of \emph{spines}, which are distinguished lines of descent that flow through a continuous-time Galton-Watson tree forward in time. We adapt and generalise the techniques in Harris et al.\ \cite{HJR20}, by constructing a change of measure $\mathbb{Q}_{\theta,T}^{(k)}$ that encourages the spines to flow through the tree in such a way that at time $T$ they represent a uniform sample of $k$ distinct particles of the tree in such a way that the overall population is size biased by the function $n \mapsto n(n-1)\ldots(n-k+1)e^{-\theta n}$. (Harris et al.\ \cite{HJR20} have $\theta=0$ in their set-up.) The parameter $\theta$, which is a discounting parameter controlling the size of the tree, furnishes a simple interpretation of sampling from a $k$ times size biased tree in the absence of a second (let alone $k^{\text{th}}$) moment. Let us mention here that this exponential discounting technique was used by first and third author in concurrent work with Sandra Palau \cite{HPP}.

A further innovation in our approach here is in accounting for the sizes of the offspring events at spine split times. This tool ultimately leads to the more descriptive limit in Theorem \ref{thm:C} of not just the ancestral process of the spines but also the magnitudes of the birth events.

After establishing basic properties of change of measure $\mathbb{Q}_{\theta,T}^{(k)}$, in Section 4 we undertake a careful analysis of the generating functions associated with processes in the universality class \eqref{eq:hyp1}, which allows us in Section 5 to ultimately tie our work together to prove our main results.

The behaviour of the spines under $\mathbb{Q}_{\theta,T}^{(k)}$ may be understood as a proxy for the ancestral behaviour of uniformly chosen particles under the original measure governing the Galton-Watson tree, though under $\mathbb{Q}_{\theta,T}^{(k)}$ the spines have a tractable distribution which is fairly easily described.

Moreover, under the measure $\mathbb{Q}_{\theta,T}^{(k)}$ we are able to describe in full detail the joint distribution of the entire population size in conjunction with the ancestral behaviour of the spines.

Finally, we  mention that Lager\r{a}s and Sagitov \cite{LS08} study the so-called \emph{reduced process} associated with continuous-time Galton-Watson processes in the heavy-tailed limit $\alpha=1$; the reduced process up until a time $t$ is the associated random process of a Galton-Watson process consisting of particles living at times $s \in [0,t]$ who have a descendent alive at time $t$. Lager\r{a}s and Sagitov mention that it is possible to show that the reduced process associated with a Galton-Watson process with heavy tails with   $\alpha \in (1,2]$  in our universality class \eqref{eq:hyp1} may be described in terms of a certain deterministic time change of a supercritical Galton-Watson process with a particular offspring distribution. (This offspring distribution also appears in Proposition 19 in Berestycki et al. \cite{BBS}  in the context of Beta coalescents.) In principle this paves a potential alternative avenue to our intermediate result in Theorem \ref{thm:B}: in order to obtain the coalescent structure of Galton-Watson trees in our university class \eqref{eq:hyp1} one may argue that the coalescent structure of a Galton-Watson tree is identical to that of its reduced tree, and thereafter study the coalescent structure of the particular supercritical tree, and ultimately undo the time change. We nonetheless favour our more direct approach, since it provides a rich probabilistic description of the coalescent structure under the size biased change of measure and, significantly, together with the understanding of  the uniform sampling at large times. The latter supplies a more intricate understanding of the relationship between the coalescent structure of the uniform sample and the population size, and in particular gives us the full description of this relationship afforded by Theorem \ref{thm:C}.

\subsection{Further discussion of related work} \label{sec:literature} 
As mentioned above, countless authors have studied various special cases of the partition process $(\pi^{(k,T)}_t)_{t \in [0,T]}$ associated with the ancestry of $k$ uniformly sampled particles from a continuous-time Galton-Watson tree. The case $k =2$, which amounts to sampling two particles from the population and studying their time to most recent common ancestor, is particularly well trodden \cite{Ath12a, Ath12b, Ath16, buhler:super, Dur78, OConnell95}. Work for $k \geq 3$ has appeared only more recently. In the setting of general continuous-time Galton-Watson trees, the second author \cite{J19} found an explicit integral formula for the finite dimensional distributions of $\pi^{(k,T)}$ in terms of the generation functions $F_t(s) := \mathbb{E}[s^{Z_t}]$ of the process, and thereafter considered the large-$T$ asymptotics in the supercritical and subcritical cases; see also Grosjean and Huillet \cite{grosjean_huillet:coalescence} and Le \cite{Le14}. 
Zukhov \cite{Zubkov76} studied a certain aspect of the $k=\infty$ case for critical Galton-Watson trees, showing that asymptotically the most recent common ancestor of the entire population at time $T$ is uniformly distributed on $[0,T]$. 

Several related models have also been considered. The first and last author have worked recently,   on the problem of sampling $k$ particles from a critical branching process in a varying environment (see \cite{HPP}). A similar result was obtained independently by Boenkost et al. \cite{BFRS}. The second author has worked recently with David Cheek \cite{CJ} on the problem of sampling a single particle uniformly from the population of a continuous-time Galton-Watson tree and studying the point process of reproduction times along the particles ancestral lineage, and with Amaury Lambert \cite{JL20} on the ancestry of continuous-state branching processes. The three authors of this manuscript have also considered the problem of sampling $k$ particles from a critical branching process with infinite mean (see \cite{HJP23}). Let us also mention work by Amaury Lambert and coauthors \cite{L1,L2} on coalescent point processes associated with trees, as well Aldous and Popovic \cite{AP} and Gernhard \cite{gernhard}.

\subsection{Overview}
We now give the structure of the remainder of the article.
In Section \ref{sec:spines}, we introduce multiple spines and our changes of measure that will underpin our approach.
 In Section \ref{sec:Q}, we prove properties of the multiple spines under the change of measure $\mathbb{Q}_{\theta,T}^{(k)}$.
Section \ref{sec:gen} is dedicated to studying basic properties of trees in the universality class \eqref{eq:hyp1}, and the large-$T$ asymptotic behaviour of spines under $\mathbb{Q}_{\theta,T}^{(k)}$ for trees in this universality class.
In the final section, Section \ref{sec:inversion}, we study the joint law of the population size and the spines under $\mathbb{Q}_{\theta,T}^{(k)}$, thereby ultimately inverting the change of measure.

\section{Spines and changes of measures} \label{sec:spines}

In this section we introduce various probability measures that will serve as essential tools for our understanding of the genealogies of samples of $k$ individuals drawn from the population without replacement at time $T$. 
We start by giving a more precise description of the continuous-time Galton-Watson (GW) population model. 
Then we will extend the original GW model by identifying $k$ distinguished lines of descent, known as \emph{spines}. 
Via a change of measure, we will then define our key size-biased and discounted GW process with $k$-spines under a probability measure $\mathbb{Q}_{\theta,T}^{(k)}$, describing its properties and how it modifies the behaviour of both the population process and the spines.
\subsection{Notation}
Throughout, we use the convention that $\mathbb{Z}^+:=\{0,1,2,3,\dots\}$ and $\mathbb{N}:=\{1,2,3,\dots\}$.
We will make use of the standard Ulam-Harris labelling system to keep track of genealogical information of particles: 
when an individual labelled $u$ dies and is replaced by $L$ offspring, these are labelled by concatenating the parent label with the number of each child, yielding offspring labels $u1, u2, \dots, uL$, and so on. 
If there is only one initial particle, for convenience, the single root is usually labelled $\emptyset$. 
Then, for example, $2.3.1$ would represent the first child of the third child of the second child of the initial ancestor.
With this notation, it is easy to refer to subtrees within, or join subtrees onto, existing trees.

Denote by $\mathcal{Z}_t$ the set of labels of all particles alive at time $t>0$, and let $Z_t=|\mathcal{Z}_t|$ be the number of particles alive at time $t>0$.
If $u$ is an ancestor of $v$ or equal to $v$ then we write $u\preceq v$, and if $u$ is a strict ancestor of $v$ then we write $u\prec v$. 
An initial labelling $\mathcal{Z}_0$ is said to be \emph{permissible} as long as no initial individual is an ancestor of, or the same as, any other one, that is, $u\not\preceq v$ for any distinct pair $u,v\in\mathcal{Z}_0$.

Throughout, we will use a common sample space and $\sigma$-algebra $(\Omega, \mathcal{F})$ which is  sufficiently enriched to describe the randomness associated with all the various processes encountered below. 
In the following subsections, we will introduce various probability measures, $\P,\mathbb{P}_{\theta, T}, \mathbb{P}^{(k)}, \mathbb{Q}^{(k)}_{\theta, T}$, and  $\mathbb{P}^{(k)}_{{\rm unif},T }$, which will all defined on this same common space $(\Omega, \mathcal{F})$. Considering the same process under different probability laws will be foundational to our approach.

We define $(\mathcal{F}_t, t\ge 0)$ to be the natural filtration of the population process $\mathcal{Z}:=(\mathcal{Z}_s)_{s\geq0}$, that is, $\mathcal{F}_t:=\sigma(\mathcal{Z}_s : s\leq t)$. (We will introduce other filtrations in the sequel;  see Subsection \ref{filts}.)

\subsection{The continuous-time Galton-Watson process under $\P$}
Let $(\Omega, \mathcal{F}, \P)$ be a probability space.
Suppose $\overline{p}=(p_i)_{i\in \mathbb{Z}^+}$ is a probability distribution on $\Z^+$ 
with finite mean $m:=\sum_{i\in\Z^+} i p_i<\infty$.

\begin{defi}[Galton-Watson process under $\mathbb{P}$]
Under the probability measure $\mathbb{P}$, we say that $\mathcal{Z}=(\mathcal{Z}_s)_{s\geq0}$ is a continuous-time Galton-Watson 
process with branching rate $r$ and offspring distribution $\overline{p}$,
started with one individual, if:
\begin{enumerate}
\item The process initially starts with one particle alive (i.e.\ $Z_0=1$ with label $\mathcal{Z}_0=\{\emptyset\}$).
\item The branching property: Any particles currently alive evolve forward in time independently of one another and of the history of the process.
\item Any particle currently alive at time $t$ undergoes branching at rate $r$ (i.e.\ the time until the particle branches is exponentially distributed at rate $r$).
\item Given that particle $v$ branches at time $t$, it immediately dies and is simultaneously replaced by $L_v$ offspring, where $L_v$ is an independent realisation of the offspring random variable $L$ with $\mathbb{P}(L=i)= p_i$, for $i\in\Z^+$.
\end{enumerate}
\end{defi}


Similarly, a Galton-Watson process initially started  with $j\geq2$ particles alive is defined analogously simply by modifying (1), typically with the labelling convention that $\mathcal{Z}_0=\{1,2,\dots,j\}$, although any permissible labelling $\mathcal{Z}_0$ can be specified.

\subsection{The Galton-Watson discounted by population size under $\mathbb{P}_{\theta, T}$}

Consider any fixed $T>0$ and $\theta \geq 0$ and define a new probability measure $\mathbb{P}_{\theta, T}$ 
on $\mathcal{F}_T$ via the Radon-Nikodym derivative
\begin{equation}\label{PthetaT}
\frac{{\ud  \mathbb{P}_{\theta, T}}}{\ud \P}\Bigg|_{\mathcal{F}_T}
:=
\frac{ e^{-\theta Z_T}}{\mathbb{E}\Big[e^{-\theta Z_T}\Big]}.
\end{equation}
In other words, the law of the process $\mathcal{Z}$ under $\mathbb{P}_{\theta, T}$ is like that of the original law $\mathbb{P}$ except with exponential discounting at rate $\theta$ according to the size of population at time $T$. 
In fact, the process under $\mathbb{P}_{\theta, T}$ remains a branching process although the behaviour becomes \emph{time-dependent}, as below (Lemma \ref{PthetaConstruction}).

Introducing discounting by the final population size turns out to be quite natural and indeed will be crucial in allowing our methods to work without any additional moment assumptions on the offspring distribution $L$, in particular, to encompass the heavy-tailed laws of interest.

The following lemma describes the behaviour of the particles under $\mathbb{P}_{\theta,T}$.
\begin{lemma}[Galton-Watson discounted by population size under $\mathbb{P}_{\theta, T}$]\label{PthetaConstruction}
Under the probability law $\mathbb{P}_{\theta, T}$, the process $\mathcal{Z}=(\mathcal{Z}_s)_{s\geq0}$ evolves over time period $[0,T]$ as a time inhomogeneous Markov branching process, where:
\begin{enumerate}
\item The process initially starts  with a single particle alive.
\item Any particles currently alive evolve forward in time independently of one another and of the history of the process \emph{(branching property)}.
\item Any particle currently alive at time $t$ undergoes branching at rate
\[
r \E\left[\left(\E[e^{-\theta Z_{T-t}}]\right)^{L-1}\right].
\]
\item Given that particle $v$ branches at time $t$, it immediately dies and is simultaneously replaced by $L_v$ offspring,
where $L_v$ is an independent realisation of the offspring random variable at time $t$, $L(t)$, where
\[
\mathbb{P}_{\theta,T}(L(t)=\ell)= p_\ell \,\frac{\left(\E[e^{-\theta Z_{T-t}}]\right)^\ell}{\E\left[\left(\E[e^{-\theta Z_{T-t}}]\right)^L\right]}.
\]
\end{enumerate}
\end{lemma}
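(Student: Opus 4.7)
The plan is to identify a $\mathbb{P}$-martingale density process, verify it implements the change of measure \eqref{PthetaT}, and then read off the modified dynamics from its jump and multiplicative structure. Set
$$g(t) := \mathbb{E}[e^{-\theta Z_{T-t}}], \qquad t \in [0,T],$$
and define the $\mathcal{F}_t$-adapted process
$$M_t := \frac{g(t)^{Z_t}}{g(0)}.$$
The branching property under $\mathbb{P}$ gives that, conditionally on $\mathcal{F}_t$, the $Z_t$ particles alive at time $t$ generate independent subtrees each distributed as the original process run for time $T-t$, so that writing $Z_T = \sum_{u\in\mathcal{Z}_t} Z_T^{(u)}$ one has
$$\mathbb{E}\big[e^{-\theta Z_T}\,\big|\,\mathcal{F}_t\big] = g(t)^{Z_t}.$$
Hence $M$ is a unit-mean nonnegative $\mathbb{P}$-martingale with $M_T$ equal to the right-hand side of \eqref{PthetaT}, so $M$ serves as the density process of $\mathbb{P}_{\theta,T}$ with respect to $\mathbb{P}$ on $(\mathcal{F}_t)_{t\le T}$.

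Second, I would extract the modified jump dynamics from $M$ via the standard rule that, under a positive martingale change of measure of a pure-jump process, the intensity of a specific jump is multiplied by the ratio of the density just after and just before that jump. Since $g$ is continuous in $t$, a branching event at time $t$ producing $\ell$ offspring gives $Z_t - Z_{t-} = \ell - 1$, hence
$$\frac{M_t}{M_{t-}} = g(t)^{\ell-1}.$$
The $\mathbb{P}$-rate at which a specified particle branches into $\ell$ offspring is $rp_\ell$, so under $\mathbb{P}_{\theta,T}$ this rate becomes $rp_\ell g(t)^{\ell-1}$. Summing in $\ell$ recovers the total per-particle branching rate $r\,\mathbb{E}[g(t)^{L-1}]$ claimed in (3), and normalizing yields
$$\mathbb{P}_{\theta,T}(L(t)=\ell) = \frac{p_\ell g(t)^{\ell-1}}{\mathbb{E}[g(t)^{L-1}]} = \frac{p_\ell g(t)^\ell}{\mathbb{E}[g(t)^L]},$$
after multiplying numerator and denominator by $g(t)$, matching (4). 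To avoid appealing to a black-box jump-Girsanov result, I would justify this step directly: compute the joint $\mathbb{P}$-density of a finite history $(\tau_i,\ell_i,v_i)_{i\le n}$ of branch times, offspring counts and branching labels up to time $t$, multiply by $\mathbb{E}[M_t \mid \text{history}]$ (which factors via $g(t)^{Z_t}$), and match term-by-term against the exponential-rate formula for a time-inhomogeneous branching process with the claimed intensities.

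Third, the branching property and Markov structure under $\mathbb{P}_{\theta,T}$ follow at once from the multiplicative factorization
$$\frac{M_T}{M_t} = \prod_{u \in \mathcal{Z}_t} \frac{e^{-\theta Z_T^{(u)}}}{g(t)},$$
in which each factor is a unit-mean density depending only on the subtree rooted at $u$, and the subtrees are independent under $\mathbb{P}$. Thus, conditionally on $\mathcal{F}_t$ under $\mathbb{P}_{\theta,T}$, the subtrees of the particles alive at time $t$ are independent, each distributed like the time-inhomogeneous branching process of parts (1)--(4) run over $[t,T]$; in particular, the process is Markov and satisfies the branching property. The main technical obstacle I anticipate is the rigorous justification of the jump-intensity step in paragraph two, which I would handle by the direct finite-history computation just described rather than invoking general jump-process Girsanov theorems.
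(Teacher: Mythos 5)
Your proposal is correct, and its core ingredients are the same as the paper's: the branching-property factorisation of $e^{-\theta Z_T}$ conditional on $\mathcal{F}_t$, which simultaneously yields the independence of subtrees and the form of the tilting. The packaging differs, however. The paper proves this lemma as the $k=0$ case of Lemma~\ref{lem:nonspine}: it writes $e^{-\theta Z_T} = e^{-\theta Z_T^v} Q_T^{(v)}$ with $Q_T^{(v)}$ independent of $v$'s subtree (giving the branching property), and then computes the modified rates by a direct infinitesimal calculation of $\mathbb{P}_{\theta,T}(\Gamma(h,\ell))$ as $h\to 0$, where $\Gamma(h,\ell)$ is the event that the root branches into $\ell$ offspring before time $h$. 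You instead introduce the density martingale $M_t = g(t)^{Z_t}/g(0) = \E[e^{-\theta Z_T}\mid\mathcal{F}_t]/\E[e^{-\theta Z_T}]$ explicitly, read the branching property off the multiplicative factorisation of $M_T/M_t$, and obtain the rates from the jump ratio $M_t/M_{t-}=g(t)^{\ell-1}$ together with the jump-intensity change-of-measure rule. That last step is exactly where the two proofs meet: your proposed ``finite-history'' justification of the jump-intensity rule is, in effect, the paper's infinitesimal $\mathbb{P}_{\theta,T}(\Gamma(h,\ell))=r h\,p_\ell\, g(t)^{\ell}/g(t)+o(h)$ computation, applied at each branch time in the history. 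Your formulation via $M_t$ is cleaner and makes the martingale structure transparent; the paper's is more elementary and avoids invoking (and then re-deriving) the abstract jump Girsanov statement. One small point worth making explicit, which you implicitly rely on but could state: the equality $r\,\E[g(t)^{L-1}]=r\,\E[g(t)^{L}]/g(t)$ (factor out the deterministic scalar $g(t)^{-1}$) reconciles the rate as written in the statement with the form $r\,\E[g(t)^L]/g(t)$ that appears in Lemma~\ref{lem:nonspine}.
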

Lemma \ref{PthetaConstruction} follows from the case $k=0$ of Lemma \ref{lem:nonspine}, which is proven in Section \ref{sec:Q}.

\subsection{The Galton-Watson process with $k$-spines under $\mathbb{P}^{(k)}$}
For any fixed $k\in\N$, we now proceed to define a measure $\mathbb{P}^{(k)}$ under which the population process $\mathcal{Z}$  has $k$ distinguished lines of descent, known as \emph{spines}.
The measure $\mathbb{P}^{(k)}$ will serve as a natural and convenient \emph{reference measure} when looking at the behaviour of other population processes with $k$ distinguished particles (eg. later we will select a sample of $k$ individuals uniformly at random  at time $T$). 

We denote the $k$ spines by $\xi=(\xi^{(1)}, \xi^{(2)}, \dots, \xi^{(k)})$, where $\xi^{(i)}$ corresponds to the distinguished line of decent of the $i^{th}$ \emph{spine}. 
Each spine is represented by a sequence Ulam-Harris labels $v_0 v_1 v_2 \dots$ which start at the initial ancestor, and where the next label in the sequence is always an offspring of the previous (i.e.\ $v_0=\emptyset$ and, for each $i\in\N$, $v_{i+1}=v_i \ell$ for some $\ell\in\{1,\dots,L_{v_i}\}$). 
A spine may be an infinite line of descent, or a finite path which terminates at a leaf in the underlying genealogical tree of the population. 
If a particle $u$ has $j$ distinct spines passing though it (i.e. $\#\{i\in\{1,\dots,k\}: u\in\xi^{(i)}\}=j$), then we say particle $u$  is \emph{carrying $j$ spines}.  

The process $\mathcal{Z}$ with $k$ spines $\xi$ under measure $\mathbb{P}^{(k)}$ is constructed as a simple extension of $\mathcal{Z}$ under $\mathbb{P}$, in that, all particles behave exactly as in the original branching process but some particles are additionally identified as carrying the spines, as follows.
Firstly, the population $\mathcal{Z}$ is constructed according to $\mathbb{P}$. Then, given a realisation of the population $\mathcal{Z}$, the $k$ spine lineages are chosen independently, each spine starts by following the initial ancestor, and then at each subsequent branching event following one of the offspring chosen uniformly at random (or dying if no offspring). 
More precisely, we have the following construction:

\begin{defi}[Galton-Watson process with $k$ spines under  $\P^{(k)}$]\label{Pkdescription}
Under probability measure $\mathbb{P}^{(k)}$, the process $\mathcal{Z}=(\mathcal{Z}_s)_{s\geq0}$ with $k$ spines $\xi=(\xi^{(1)}, \xi^{(2)}, \dots, \xi^{(k)})$ is defined as follows:
\begin{enumerate}
\item The process initially starts with one particle alive (i.e.\ $\mathcal{Z}_0=\{\emptyset\}$, with $Z_0=1$ particle labelled as the root $\emptyset$) which is carrying the $k$ spines. 
\item The branching property: Any particle alive --- either carrying or not carrying spines --- evolves forwards in time independently of the other particles in the system and of the history of the process.
\item Any particle currently alive at time $t$ undergoes branching at rate $r$.
\item Given that a particle $v$ branches at time $t$, it immediately dies and is simultaneously replaced by $L_v$ offspring, where $L_v$ is an independent realisation of the offspring random variable $L$ with $\mathbb{P}(L=i)= p_i$ for $i\in\Z^+$. 
\item Conditional on particle $v$ carrying $j$ spines at the time it branches and on having $L_v=\ell$ offspring:
\begin{enumerate}
\item if $\ell\geq1$, each of the $j$ spines chooses independently and uniformly at random which of the $L_v$ offspring to continue to follow. 
\item $\ell=0$, there are no offspring for the spines to continue to follow and those $j$ spine paths terminate
and pass into the cemetery state 
$\overline{\delta}$.
\end{enumerate}
\end{enumerate}
\end{defi}

\begin{rem}
We note that a Galton-Watson processes with $k$-spines under  $\P^{(k)}$ can also be thought of as a multi-type Galton-Watson process where the type of each particle in $\{0,1,\dots,k\}$  corresponds to the number of spines passing along it. Here, a type $j$ individual can only have offspring types of the same or lower value and where the sum of offspring types must match the parent's type (the number of spines are preserved at branching events). 
As a richer alternatively to keep track of individual spine trajectories, the type of each particle could also include of the labels of any spines passing along it
(for example, taking the type space as all possible subsets of $\{1,\dots,k\}$),  
although again only the number of spines along each particle would affect its branching.
\end{rem}

We write $\xi_t=(\xi_t^{(1)}, \ldots, \xi_t^{(k)})$ to identify the $k$ spines at time $t$, where $\xi_t^{(i)}$ is the label of the particle carrying spine $i$ at time $t$. Since each spine chooses its path independently and uniformly from amongst the available offspring, we can immediately  observe that, for any $u_1,\dots,u_k\in\mathcal{Z}_t$ and $\underline{u}=(u_1,\dots,u_k)$ , 
\begin{equation}\label{mea-k-spine}
\mathbb{P}^{(k)}\Big( \xi_t=\underline{u}\Big|\mathcal{F}_t\Big)=\prod_{i=1}^k\prod_{v\prec u_i}\frac{1}{L_v}.
\end{equation} 
Also observe that spines paths can end at a leaf under $\P^{(k)}$.  
For more details, see proof of Lemma 6 in Harris et al. \cite{HJR20}.

\subsubsection{Some further terminology and notation} 
When we are interested in which particular spines pass though a given particle, it is sometimes convenient to think of a particle carrying marks.
The $k$ spines are marked (i.e.\ labelled) as $1,\dots,k$, so we can identify which spines a particle is carrying by their marks. 
Since all the spines start at the initial ancestor, particle $\emptyset$ carries all $k$ marks, $1,2,3,\dots,k$. 
A particle $v$ through which $j$ spines pass will carry $j$ marks for some $b_1<b_2<\dots<b_j$, where each $b_i\in\{1,\dots,k\}$ uniquely identifes a spines.

Further, let $n_t$ be the number of distinct spines (i.e.\ the number of distinct particles in $\mathcal{Z}_t$ carrying marks) at time $t$. For $i\ge 1$, let $\tau_i$ be the $i$-th spine split time
\[
\tau_0=0, \qquad \tau_i=\inf\{t\ge 0: n_t\notin\{1,2,\ldots, i\}\}
\] 
where $\tau_i$ is the first time we have more than $i$ spine groups, equivalently, the $i$-th spine split time when accounting for multiplicities (i.e.\ if a group of spines splits into $i$ separate groups at a branch time $t$, then there would be $i-1$ new split times at $t$). We also let $\rho_t^{(i)}$ be the total number of spines accompanying spine $i$ at time $t$, including $i$ itself ($\rho_t^{(i)}=0$ if $\xi^{(i)}=\overline\delta$ corresponding to spine $i$ already in the cemetery state).

The set of distinct spine particles at any time $t$, and the marks that are following those spine particles, induces a partition $\pi_t^{(k)}$ of $\{1,\ldots, k\}$ as follows: we declare $i$ and $j$ to be in the same block of $\pi_t^{(k)}$ if $\xi_t^{(i)}=\xi^{(j)}_t$. If we then let $\mathcal{T}_i=\pi^{(k)}_{\tau_i}$ for $i=0,\ldots, m$, where $m$ is the number of split times of the  partition process, then we have created a {splitting sequence} of partitions $\mathcal{T}_0, \mathcal{T}_1, \ldots, \mathcal{T}_m$ which describe the topological information about the spines without the information about the spine split times. 
 
For any particle $u\in \mathcal{Z}_t$, there exists a last time at which $u$ was a spine (which may be $t$). If this time equals $\tau_i$ for some $i$, then we say that $u$ is a {\it residue} particle; if it does not equal $\tau_i$ for any $i$, and $u$ is not a spine, then we say that $u$ is \emph{ordinary}. Each particle is exactly one of residue, ordinary, or a spine (i.e.\ carrying one or more marks).

\subsubsection{Filtrations}\label{filts} Whilst all our processes and probability measures are assumed to be carried on a sufficiently rich common space $(\Omega,\mathcal{F})$, it will be very convenient for us to make use of a number of different sub-$\sigma$-algebras and filtrations according to how much information we want to know about the particles and spines.
To this end, we let:
\begin{itemize}
\item $(\mathcal{F}^{(k)}_t)_{t\ge 0}$ be the filtration containing all information about the process, including the $k$ spines, up to time $t$.
\item $(\mathcal{F}_t)_{t\ge 0}$ be the filtration containing only the information about the Galton-Watson tree, but nothing about the identity of any spines. 
\item $(\widetilde{\mathcal{G}}_t^{(k)})_{t\ge 0}$ be the filtration  containing all the information about the $k$ spines up to time $t$, including the birth events and numbers of offspring along the $k$ spines, but no information about the rest of the tree. 
\item $(\mathcal{G}_t^{(k)})_{t\ge 0}$ be the filtration containing information only about the spine splitting events (including which marks follow which spine); $(\mathcal{G}_t^{(k)})_{t\ge 0}$ does not know when births of ordinary particles along the spines occur (i.e.\ any births coming off the spines when the spines all stay together). 
\end{itemize}
Note, $\mathbb{P}^{(k)}$ can be defined on $\mathcal{F}^{(k)}_\infty\subseteq\mathcal{F}$, and this is the smallest $\sigma$-algebra we might use. For further details, see Harris and Roberts \cite{HR17}

\subsection{The $k$-spine measure $\mathbb{Q}^{(k)}_{\theta, T}$.}
We will now introduce the key probability measure under which the spines will form a uniform choice (without replacement) from the population alive at time $T$, as required, but where there will also be $k$-size biasing and discounting by the population size at time $T$. In particular, the population process with spines under this measure will turn out to have sufficient structural independence properties to greatly facilitate computations, and where the discounting allow us to develop $k$-spine methods without requiring any additional moment conditions. 

We will often make use of the notation $n^{(k)}$ to represent the number of ways of choosing $k$ distinct objects from $n$ objects, more precisely, for any integers $n$ and $k$,
\[
n^{{(k)}}:=\left\{\begin{array}{ll}
n(n-1)\cdots(n-k+1), & \textrm{if } n\geq k\geq1,\\
1 & \textrm{if } n\geq1, k=0, \\
0 & \textrm{otherwise}.
\end{array}
\right.
\]

Let us fix $T>0$ and $\theta \geq 0$ and introduce a new probability measure $\mathbb{Q}^{(k)}_{\theta, T}$ on $\mathcal{F}_T^{(k)}$ by setting
\begin{equation}\label{newprob}
\frac{{\ud  \mathbb{Q}^{(k)}_{\theta, T}}}{\ud \P^{(k)}}\Bigg|_{\mathcal{F}^{(k)}_T}:=\frac{\mathbf{1}_{A_{k, T}}\left(\prod_{i=1}^k\prod_{v\prec \xi^{(i)}_T} L_v\right) e^{-\theta Z_T}}{\mathbb{E}^{(k)}\Big[Z_T^{(k)} e^{-\theta Z_T}\Big]}, 
\end{equation}
where $A_{k, T}:=\{\xi_T^{(i)}\not=\xi_T^{(j)}, \forall i\not=j, i,j=1,\dots,k \}$ denotes the event that the $k$-spines are alive and distinct at time $T$.

We now comment on some key properties of $\mathbb{Q}^{(k)}_{\theta, T}$. 
First, we note that $\mathbb{Q}^{(0)}_{\theta,T} = \mathbb{P}_{\theta,T}$.

Let $\mathcal{Z}^{(k)}_T$ denote the collection of distinct $k$-tuples of individuals alive at time $T$, so that $\# \mathcal{Z}_T^{(k)} = Z_T^{(k)}$. Using the selection of the spines under $\P^{(k)}$ given the underlying population process (Definition \ref{Pkdescription}) and then using \eqref{mea-k-spine}, 
we find that
\begin{equation}\label{expnprob}
\begin{split}
\mathbb{E}^{(k)}\left[\mathbf{1}_{A_{k, T}}e^{-\theta Z_T}\prod_{i=1}^{k}\prod_{v\prec \xi^{(i)}_T} L_v\Bigg |\mathcal{F}_T\right]&=\sum_{u_1, \ldots, u_k\in \mathcal{Z}^{(k)}_T}\P^{(k)}\left( \xi_T=\underline{u}\Big| \mathcal{F}_T\right)\,e^{-\theta Z_T}\prod_{i=1}^{k}\prod_{v\prec u_i} L_v  =Z_T^{(k)}e^{-\theta Z_T}.
\end{split}
\end{equation}

This confirms thats $\mathbb{Q}^{(k)}_{\theta, T}$  is a probability measure, and also that 
\begin{equation}\label{newprob1}
\frac{{\ud  \mathbb{Q}^{(k)}_{\theta, T}}}{\ud \P^{(k)}}\Bigg|_{\mathcal{F}_T}=\frac{Z_T^{{(k)}} e^{-\theta Z_T}}{\mathbb{E}^{(k)}\Big[Z_T^{{(k)}}e^{-\theta Z_T}\Big]}.
\end{equation}
In particular, the distribution of the random variable $Z_T$ under $\mathbb{Q}^{(k)}_{\theta, T}$ is that of $\P^{(k)}$ but $k$-size biased and $\theta$-discounted by the function $n \mapsto n^{{(k)}}e^{-\theta n }$.

Importantly, for any $u_1, \ldots, u_k \in Z_T$, we have
\[
\mathbb{Q}^{(k)}_{\theta, T}\left(\xi_T=\underline{u}\Big| \mathcal{F}_T\right)\propto \P^{(k)}\left(\xi_T=\underline{u}\Big| \mathcal{F}_T\right)\mathbf{1}_{A_{k, T}}\left(\prod_{i=1}^{k}\prod_{v\prec u_i} L_v \right) e^{-\theta Z_T}, 
\]
then from \eqref{mea-k-spine}, we deduce the crucial equation
\begin{equation}\label{qk-form}
\mathbb{Q}^{(k)}_{\theta, T}\left(\xi_T=\underline{u}\Big| \mathcal{F}_T\right)=\frac{\mathbf{1}_{A_{k, T}}}{Z_T^{{(k)}}},
\end{equation}
which states the property that, under the measure $\mathbb{Q}^{(k)}_{\theta, T}$, the $k$-spines are a uniform choice without replacement from those particles alive a time $T$.

From the previous observation and its definition in \eqref{newprob}, we can think of $\mathbb{Q}^{(k)}_{\theta, T}$ by first (i) $k$-size biasing and discounting by the population size $Z_T$ given $\mathcal{F}_T$ and then (ii) choosing $k$-spines uniformly without replacement under $\mathbb{Q}^{(k)}_{\theta, T}$ given $\mathcal{F}_T$, i.e
\begin{equation}
\frac{\ud \mathbb{Q}^{(k)}_{\theta, T}}{\ud \mathbb{P}^{(k)}}\Bigg|_{\mathcal{F}^{(k)}_T}
=\frac{Z_T^{{(k)}} e^{-\theta Z_T}}{\mathbb{E}^{(k)}\Big[Z_T^{{(k)}} e^{-\theta Z_T}\Big]}\cdot
\frac{\mathbf{1}_{A_{k, T}}}{Z_T^{{(k)}}}\cdot
\prod_{i=1}^k\prod_{v\prec \xi^{(i)}_T} L_v.
\label{ksizeunif}
\end{equation}
Also observe that $\mathbb{Q}^{(k)}_{\theta, T}(Z_T\ge k)=1$.

Our aim is to provide a complete description of the evolution of the Galton-Watson process with $k$-spines under $\mathbb{Q}^{(k)}_{\theta, T}$ - its desirable properties (including uniformly sampled spines) and tractability due to the independence within its structure (due to the size-biasing) will prove absolutely crucial to our later analysis of uniform sampling.

We already remarked that a Galton-Watson process with $k$-spines under $\mathbb{P}^{(k)}_{\theta, T}$ can be thought of as a time inhomogeneous multi-type Galton-Watson process, where the type of a particle is the number of spines it carries. 
Since the branching structure is preserved by the ``product" structure of the Radon-Nikodyn derivative in \eqref{newprob}, it  turns out that the Galton-Watson process with $k$-spines under $\mathbb{Q}^{(k)}_{\theta, T}$ can be seen as another time inhomogeneous multi-type Galton-Watson process (cf. Abraham and Debs \cite{AD20} for a discrete GW with k-spines described this way).
We have the following property:

\begin{lemma}[Branching Markov property/Symmetry Lemma] \label{branlem}Suppose that $u\in \mathcal{Z}_t$  is carrying $j$ marks at time $t$, i.e. that $u$ has $j$ spines passing through it. Then,  under $\mathbb{Q}^{(k)}_{\theta, T}$, the subtree generated by $u$ after time $t$ is independent of the rest of the process and behaves as if under $\mathbb{Q}^{(j)}_{\theta, T-t}$.
\end{lemma}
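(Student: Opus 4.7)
The plan is to verify the branching Markov property under $\mathbb{Q}^{(k)}_{\theta,T}$ by factorising the Radon-Nikodym density \eqref{newprob} across the subtrees rooted at particles alive at time $t$, and then combining this with the (straightforward) branching property already enjoyed by the reference measure $\mathbb{P}^{(k)}$. The latter says that, for each $w\in\mathcal{Z}_t$ carrying $j_w$ marks, the time-shifted subtree $\mathcal{T}^w$ rooted at $w$ (together with its $j_w$ continuing spines) has law $\mathbb{P}^{(j_w)}$, and that the family $\{\mathcal{T}^w\}_{w\in\mathcal{Z}_t}$ is conditionally independent given $\mathcal{F}^{(k)}_t$; this is immediate from Definition \ref{Pkdescription}.

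The key step is to split the three factors in the numerator of \eqref{newprob} at time $t$. Writing $C_t:=\prod_{i=1}^k\prod_{v\prec\xi^{(i)}_t}L_v$ (which is $\mathcal{F}^{(k)}_t$-measurable), $Z^w_s$ for the size of $\mathcal{T}^w$ at relative time $s$, and $A^w_{j_w,T-t}$ for the event that the $j_w$ spines in $\mathcal{T}^w$ are alive and distinct at relative time $T-t$, one has
\begin{align*}
e^{-\theta Z_T} &= \prod_{w\in\mathcal{Z}_t} e^{-\theta Z^w_{T-t}}, \qquad \mathbf{1}_{A_{k,T}} = \prod_{w\in\mathcal{Z}_t}\mathbf{1}_{A^w_{j_w,T-t}}, \\
\prod_{i=1}^k \prod_{v\prec\xi^{(i)}_T} L_v &= C_t \cdot \prod_{w\in\mathcal{Z}_t}\ \prod_{i:\,\xi^{(i)}_t=w}\ \prod_{w\preceq v\prec\xi^{(i)}_T} L_v,
\end{align*}
with the usual conventions that $\mathbf{1}_{A^w_{0,\cdot}}=1$ and empty products equal $1$. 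Substituting these identities displays the numerator of \eqref{newprob} as $C_t\cdot\prod_{w\in\mathcal{Z}_t}\mathcal{R}_w$, where each $\mathcal{R}_w$ is precisely the unnormalised numerator of $\mathrm{d}\mathbb{Q}^{(j_w)}_{\theta,T-t}/\mathrm{d}\mathbb{P}^{(j_w)}$ evaluated on the shifted subtree $\mathcal{T}^w$.

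Combining the two ingredients yields the claim. For any bounded measurable $F$ on $\mathcal{T}^u$ and any bounded $\mathcal{F}^{(k)}_t$-measurable $G$, inserting the factorisation into $\mathbb{E}_{\mathbb{Q}^{(k)}_{\theta,T}}\bigl[G\,F(\mathcal{T}^u)\bigr]$ and using the conditional independence of $\{\mathcal{T}^w\}$ under $\mathbb{P}^{(k)}$, the factors for $w\neq u$ integrate (by \eqref{expnprob}) to $\mathbb{E}^{(j_w)}\bigl[Z^{(j_w)}_{T-t}e^{-\theta Z_{T-t}}\bigr]$, while the factor for $w=u$ becomes $\mathbb{E}^{(j)}\bigl[Z^{(j)}_{T-t}e^{-\theta Z_{T-t}}\bigr]\cdot\mathbb{E}_{\mathbb{Q}^{(j)}_{\theta,T-t}}[F]$ by definition of $\mathbb{Q}^{(j)}_{\theta,T-t}$. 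Rewriting everything back in terms of $\mathbb{Q}^{(k)}_{\theta,T}$ then gives $\mathbb{E}_{\mathbb{Q}^{(k)}_{\theta,T}}\bigl[G\,F(\mathcal{T}^u)\bigr]=\mathbb{E}_{\mathbb{Q}^{(k)}_{\theta,T}}[G]\cdot\mathbb{E}_{\mathbb{Q}^{(j)}_{\theta,T-t}}[F]$, which is simultaneously the independence of $\mathcal{T}^u$ from the rest of the process and the identification of its law as $\mathbb{Q}^{(j)}_{\theta,T-t}$. The main obstacle is the careful bookkeeping of Step 2: one must check that the $\mathcal{F}^{(k)}_t$-measurable prefactor $C_t$ together with the various subtree normalising constants cancel cleanly in the ratio defining $\mathbb{Q}^{(k)}_{\theta,T}$, leaving no residual measurable factor in front of $\mathbb{Q}^{(j)}_{\theta,T-t}$.
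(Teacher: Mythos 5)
Your proof is correct and follows the same factorisation strategy that the paper (via the cited Lemma 8 of Harris et al.\ \cite{HJR20}) relies on; the paper itself simply defers to that reference, noting the only change is the addition of the discounting term $e^{-\theta Z_T}$, which, as you show, factors over the time-$t$ subtrees as cleanly as the other two ingredients. Your splitting of the Radon-Nikodym numerator into $C_t\cdot\prod_{w\in\mathcal{Z}_t}\mathcal{R}_w$, followed by conditioning on $\mathcal{F}_t^{(k)}$ and integrating each $\mathcal{R}_w$ via \eqref{expnprob}, is exactly the right calculation.

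One small point of precision: in the final verification you restrict $G$ to be $\mathcal{F}^{(k)}_t$-measurable, which establishes independence of $\mathcal{T}^u$ from the past of the process, whereas the lemma asserts independence from \emph{the rest of the process}, i.e.\ also from the subtrees $\mathcal{T}^w$ for $w\neq u$. Your factorisation already delivers this stronger statement, since each factor $\mathcal{R}_w$ integrates out separately under the $\mathbb{P}^{(k)}$-conditional independence of the subtrees given $\mathcal{F}^{(k)}_t$; one simply takes $G = G_0\cdot\prod_{w\neq u}F_w(\mathcal{T}^w)$ with $G_0$ being $\mathcal{F}^{(k)}_t$-measurable, and the identical computation yields
\[
\mathbb{E}_{\mathbb{Q}^{(k)}_{\theta,T}}\Bigl[G_0\,F(\mathcal{T}^u)\prod_{w\neq u}F_w(\mathcal{T}^w)\Bigr]
= \mathbb{E}_{\mathbb{Q}^{(j)}_{\theta,T-t}}[F]\cdot\mathbb{E}_{\mathbb{Q}^{(k)}_{\theta,T}}\Bigl[G_0\prod_{w\neq u}F_w(\mathcal{T}^w)\Bigr].
\]
Worth stating explicitly, but not a genuine gap.
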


The proof of this Lemma follows from the same arguments of  Lemma 8 in Harris et al. \cite{HJR20}, where the measure $\mathbb{Q}^{(k)}_{\theta, T}$ is considered without the compensation term $\theta$. Thus, it just remains  to identify the branching rates for $k$-spines to describe $\mathbb{Q}^{(k)}_{\theta, T}$. 

The following result gives a full account of the behaviour of the spine and non-spine particles under the change of measure $\mathbb{Q}^{(k)}_{\theta,T}$.

\begin{lemma}[Size-biased and discounted Galton-Watson process with $k$ spines under  $\mathbb{Q}^{(k)}_{\theta, T}$]
\label{Qconstr}
The process $\mathcal{Z}=(\mathcal{Z}_s)_{s\in[0,T]}$ with $k$ spines $\xi=(\xi^{(1)}, \xi^{(2)}, \dots, \xi^{(k)})$  under measure $\mathbb{Q}^{(k)}_{\theta, T}$ evolves as follows:
\begin{enumerate}
\item The process starts at time $0$ with one particle carrying all $k$ spines (i.e.\ $\mathcal{Z}_0=\{\emptyset\}$ and $\xi_0=(1,2,\dots,k)$).
\item A particle carrying $j$ spines evolves a sub-tree forward in time independently of the rest of the process (branching Markov property).
\item 
A particle carrying $j\geq1$ spines at time $t$ branches into $\ell$ offspring and the $j$ spines split into $g\in\{1,\dots,j\}$ groups of sizes $k_1,\dots,k_g\geq 1$ with $\sum_{i=1}^g k_i=j$
at rate
\[ 
\frac{j!}{\prod_{m=1}^j h_m!\prod_{i=1}^g k_i!}\cdot
\frac{\prod_{i=1}^g \mathbb{E}[Z^{(k_i)}_{T-t}e^{-\theta Z_{T-t}}]}{\mathbb{E}[Z^{{(j)}}_{T-t}e^{-\theta Z_{T-t}}]} 
r\mathbb{E}\Big[L^{(g)}\left(\mathbb{E}[e^{-\theta Z_{T-t}}]\right)^{L-g}\Big]\cdot
\frac{\ell^{(g)}\left(\mathbb{E}[e^{-\theta Z_{T-t}}]\right)^{\ell-g}\,p_\ell}{\mathbb{E}\Big[L^{(g)}\left(\mathbb{E}[e^{-\theta Z_{T-t}}]\right)^{L-g}\Big]},
\]
where $h_m:=|\{i: k_i=m\}|$ is the number of spine groups of size $m$, so that $\sum_{m=1}^j m h_m =j$ and $\sum_{m = 1}^j h_m = g$.
\item
Given a particle carrying  $j\geq1$ spines branches into $\ell$ offspring 
where the $j$ spines split into $g$ groups of sizes $k_1,\dots,k_g\geq 1$, 
the spines are assigned between the offspring as follows:
\begin{enumerate}
\item choose $g$ of the $\ell$ offspring to carry the spine groups 
uniformly amongst the $\ell!/(g!(\ell-g)!$ distinct ways. 
\item assign the group sizes $k_1,\dots, k_g$ amongst the $g$ offspring chosen to carry the spine groups 
uniformly amongst the ${g!}/{\prod_{m=1}^j h_m!}$ distinct allocations.
\item partition the $j$ (labelled) spines between the $g$ chosen offspring with their given group sizes from $k_1,\dots,k_g$  
uniformly amongst the $j!/{\prod_{m=1}^g k_m!}$ distinct ways.
\end{enumerate}
\item 
Finally, any particle $v$ which is alive at time $t$ and carries no spines behaves independently of the remainder of the process and undergoes branching 
into $\ell$ offspring that carry no spines at rate
\[
r \E\left[\left(\E[e^{-\theta Z_{T-t}}]\right)^{L-1}\right]\cdot\frac{\left( \E[e^{-\theta Z_{T-t}}]\right)^\ell \, p_\ell }{\E\left[\left(\E[e^{-\theta Z_{T-t}}]\right)^L\right]}.
\]
\end{enumerate}
\end{lemma}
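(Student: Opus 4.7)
The strategy is to combine two ingredients: the branching Markov property (Lemma~\ref{branlem}), which reduces the verification to the first-event analysis of a root particle carrying $j$ spines under $\mathbb{Q}^{(j)}_{\theta, S}$ for each $j \in \{0, 1, \ldots, k\}$ and $S > 0$; and a direct computation of this first-event law from the Radon--Nikodym derivative~\eqref{newprob}. Items (1) and (2) follow from the definitions and Lemma~\ref{branlem}, while item (5) is the case $j = 0$, where $\mathbb{Q}^{(0)}_{\theta, S} = \mathbb{P}_{\theta, S}$ and the dynamics reduce to those of Lemma~\ref{PthetaConstruction}. It therefore suffices to verify items (3) and (4) for $j \geq 1$.

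Set $G^{(j)}_a(\theta) := \mathbb{E}^{(j)}[Z_a^{(j)} e^{-\theta Z_a}]$ and $M_a(\theta) := \mathbb{E}[e^{-\theta Z_a}]$, and fix $s \in (0, S)$, an offspring count $\ell$, and a specific labelled configuration $E$: a choice of $g$ of the $\ell$ offspring together with a set partition of $\{1, \ldots, j\}$ into labelled groups of sizes $(k_1, \ldots, k_g)$ assigned to those chosen offspring. Under $\mathbb{P}^{(j)}$, the event ``first branch at time $s$ with configuration $E$'' has density $r e^{-rs} p_\ell \ell^{-j} \, \mathrm{d}s$ (exponential branch time, offspring law, and independent uniform spine choice), and conditional on it the $\ell$ offspring subtrees evolve independently, with the $g$ spine subtrees distributed as $\mathbb{P}^{(k_i)}$-processes and the remaining $\ell - g$ as $\mathbb{P}$-processes, all on $[0, S-s]$. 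Three factorisations then hold on $E \cap A_{j, S}$: $\mathbf{1}_{A_{j,S}} = \prod_{i=1}^g \mathbf{1}_{A_{k_i, S-s}}$ (the indicators act on disjoint subtrees); $e^{-\theta Z_S} = \prod_{u=1}^\ell e^{-\theta Z^{[u]}_{S-s}}$, where $Z^{[u]}$ is the subpopulation descending from the $u$-th offspring; and $\prod_{i=1}^j \prod_{v \prec \xi_S^{(i)}} L_v = \ell^j \prod_{i=1}^g (\text{spine product within spine subtree } i)$, with the $\ell^j$ prefactor arising because $\emptyset$ is a strict ancestor of each of the $j$ surviving spines. Combining these, taking expectations, and using independence of subtrees together with \eqref{expnprob} applied within each spine subtree gives
\[
\mathbb{Q}^{(j)}_{\theta, S}(E) \;=\; \frac{r e^{-rs} p_\ell}{G^{(j)}_S(\theta)} \, \prod_{i=1}^g G^{(k_i)}_{S-s}(\theta) \cdot M_{S-s}(\theta)^{\ell - g} \, \mathrm{d}s,
\]
while the same argument applied to the event ``no branch before $s$'', together with the strong Markov property of $\mathbb{P}^{(j)}$ restarting from a single particle at time $s$ carrying all $j$ spines, yields $\mathbb{Q}^{(j)}_{\theta, S}(\text{no branch before } s) = e^{-rs} G^{(j)}_{S-s}(\theta)/G^{(j)}_S(\theta)$.

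Dividing these two identities, the instantaneous rate under $\mathbb{Q}^{(j)}_{\theta, S}$ at time $s$ for the specific labelled configuration $E$ is
\[
\lambda_E(s) \;=\; \frac{r \, p_\ell \prod_{i=1}^g G^{(k_i)}_{S-s}(\theta) \cdot M_{S-s}(\theta)^{\ell - g}}{G^{(j)}_{S-s}(\theta)}.
\]
Since $\lambda_E$ depends only on $\ell$ and on the multiset of group sizes, item~(4) follows at once: conditional on a branching event of the type in item~(3), the labelled configuration is uniform across the $\binom{\ell}{g} \cdot (g!/\prod_m h_m!) \cdot (j!/\prod_i k_i!) = \ell^{(g)} j!/(\prod_m h_m! \prod_i k_i!)$ possibilities, which matches the decomposition (4a)--(4c). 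Summing $\lambda_E$ over all such configurations and recognising $\sum_\ell p_\ell \ell^{(g)} M_{S-s}(\theta)^{\ell - g} = \mathbb{E}[L^{(g)} M_{S-s}(\theta)^{L-g}]$ recovers precisely the rate in item~(3) (after the time relabelling $S - s \mapsto T - t$). The main (though routine) technical obstacle is the careful bookkeeping of the weight $\prod_i \prod_{v \prec \xi_S^{(i)}} L_v$ and the survival indicator $A_{j,S}$ under the subtree decomposition: identifying the $\ell^j$ prefactor from the root branching and verifying that the remaining weights and indicators separate cleanly into the independent subtree expectations $G^{(k_i)}_{S-s}(\theta)$ and $M_{S-s}(\theta)$.
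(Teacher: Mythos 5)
Your proposal is correct, and the core technique — factorizing the Radon--Nikodym derivative \eqref{newprob} over the offspring subtrees created at the first branch, applying \eqref{expnprob} inside each spine subtree, and invoking Lemma \ref{branlem} to localize the analysis to a root under $\mathbb{Q}^{(j)}_{\theta,S}$ — is the same as the paper's. Where you differ is in the organization. The paper establishes item (3) by computing the unconditional density of the first \emph{split} time $\tau_1$ (Lemma \ref{Splitsgspines}), which must account for an arbitrary number of births off the spine preceding $\tau_1$ and so requires Campbell's formula; the rate is then obtained by dividing by $\mathbb{Q}^{(k)}_{\theta,T}(\tau_1>t)$ from Lemma \ref{Lspinenot}/\ref{Lspinenotbis}, with the $g=1$ case handled separately via Lemmas \ref{firstbe} and \ref{lem:bots} and item (5) via Lemma \ref{lem:nonspine}. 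Your route instead conditions on the first \emph{branch} event of the root and divides by $\mathbb{Q}(\text{no branch before } s)$ (which matches Lemma \ref{nobirthst}); because no intermediate births off the spine can precede the event analysed, Campbell's formula is unnecessary and all cases $g\geq 0$ are treated uniformly. The two routes agree because the Campbell factor and the factor $e^{-rt}$ cancel between numerator and denominator in the paper's computation. Your organization is a mild but genuine simplification of the local-rate derivation; what you forgo is the explicit split-time density of Lemma \ref{Splitsgspines}, which the paper reuses directly for Lemmas \ref{lem:firstsplit} and \ref{lem:Qmain}. Two small points worth tightening: the ``strong Markov property'' in the no-branch computation is just the ordinary Markov property at the deterministic time $s$; and for item (5), you should note that running your $j=0$ first-event computation is precisely what would prove the dynamics of $\mathbb{P}_{\theta,T}$ in Lemma \ref{PthetaConstruction} (which the paper instead derives as the $k=0$ case of Lemma \ref{lem:nonspine}), so there is no circularity.
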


For the proof of Lemma \ref{Qconstr}, Part (1) follows from the definition, Part (2) follows from the Markov branching property of Lemma \ref{branlem}, Part (3) and Part (4) follow from Lemma \ref{Splitsgspines}, and, finally, Part (5) will be given by Lemma \ref{lem:nonspine}.

The number of spines following each offspring is of particular interest as whenever spines split apart going forward in time this corresponds to coalescence of family lines when viewing backwards in time starting from the individuals sampled at the end.
There are various other ways of describing the process with spines, each of which can be extracted from the above description that involves the number of spines particles carry.  
If instead we are interested in the partitions formed by individual spines and how they break up, 
or want thinking about the the process as a multi-type Galton-Watson process, 
the spine splitting rates above can be readily adjusted by the required combinatorial factors.

Observe that the rate a particle carrying $j$ spines at time $t$ branches and all the spines stay together (so $g=1$)
is given by
$$
r\mathbb{E}\Big[L \left(\mathbb{E}[e^{-\theta Z_{T-t}}]\right)^{L-1}\Big]
$$
and the offspring distribution at a branching event at time $t$ when all spine stay together is size-biased and discounted by time to go 
with the probability of getting $\ell$ offspring being
$$
\frac{\ell \left(\mathbb{E}[e^{-\theta Z_{T-t}}]\right)^{\ell-1}\,p_\ell}{\mathbb{E}\Big[L\left(\mathbb{E}[e^{-\theta Z_{T-t}}]\right)^{L-1}\Big]}.
$$
Such branching events where all the spines follow the same particle ($g=1$) are sometimes referred to as \emph{births off the spine}, as opposed to \emph{spine splitting} branching events where the spine break apart into 2 or more groups ($g\geq 2$). 
Importantly here, \emph{the births off {any} spine branch occur independently of the number of spines following that branch}
(i.e.\ independent of $j$ whenever $g=1$, as above).
This very convenient feature means that the sub-populations 
that have come off any particular spine branch only depend on time period over which that branch has run, 
but not on the number of spines along that branch or whether it changes along it.
As the total population is made up of the spine plus the subpopulations coming off each spine branch, this observation will make understanding the total population size relatively straightforward under $\mathbb{Q}^{(k)}_{\theta, T}$.

\subsection{Uniform sampling from a Galton-Watson process} 
As was noted in Harris et al.\ \cite{HJR20}, the relatively easy to work with measure $\mathbb{Q}^{(k)}_{\theta, T}$ proves invaluable in that questions about past genealogies of particles sampled uniformly at time $T$ under $\mathbb{P}^{(k)}_{{\rm unif},T}$ may be converted into more tractable questions about the forward-in-time behaviour of the spines under $\mathbb{Q}^{(k)}_{\theta,T}$.
Let us introduce the probability measure $\mathbb{P}^{(k)}_{{\rm unif},T }$ as follows. 
Let $f$ be a measurable functional on the genealogies of $k$-tuples of particles. 
That is, let $f$ be a functional of the (Ulam-Harris labelling of) the ancestors of the $k$ particles, their birth times, death times, and the number of offspring they have upon death.
Let 
$\xi_T=(\xi^{(1)}_T, \ldots, \xi^{(k)}_T)$
be a uniform sample without replacement at time $T$ taken from a Galton-Watson process conditioned on the event that $\{Z_T\ge k\}$. 

Define the probability measure $\mathbb{P}^{(k)}_{{\rm unif},T }$ on $\{Z_T\geq k\}$ as follows
\begin{equation}\label{punif}
\begin{split}
\mathbb{E}^{(k)}_{{\rm unif},T }&\Big[f(\xi_T)\Big|Z_T\ge k\Big]
=\mathbb{E}{}\left[\frac{1}{Z_T^{{(k)}}}\sum_{\underline{u} \in \mathcal{Z}^{(k)}_T}f(\underline{u})\Bigg|Z_T\ge k\right],
\end{split}
\end{equation}
where 
the first term of the right hand side of $\eqref{punif}$ is the probability for any given choice of distinct $\underline{u} \in \mathcal{Z}^{(k)}_T$. 
Equivalently, we can make the definition
\begin{equation}
\frac{\ud \mathbb{P}^{(k)}_{{\rm unif},T }(\,\cdot\, | Z_T\geq k)}{\ud \mathbb{P}^{(k)}}\Bigg|_{\mathcal{F}^{(k)}_T}
:=
\frac{\mathbf{1}_{A_{k, T}}}{Z_T^{{(k)}}}\cdot
\prod_{i=1}^k\prod_{v\prec \xi^{(i)}_T} L_v.
\label{punifdef}
\end{equation}
In particular, recalling \eqref{ksizeunif}, we directly see the size-biased relationship between $\mathbb{Q}^{(k)}_{\theta, T}$ and $\mathbb{P}^{(k)}_{{\rm unif},T }$ as
\[
\frac{\ud \mathbb{Q}^{(k)}_{\theta, T}}{\ud \mathbb{P}^{(k)}_{{\rm unif},T }(\,\cdot\, | Z_T\geq k)}\Bigg|_{\mathcal{F}^{(k)}_T}
=\frac{Z_T^{{(k)}} e^{-\theta Z_T}}{\mathbb{E}^{(k)}\Big[Z_T^{{(k)}} e^{-\theta Z_T}\Big]}.
\]

For the particular case of the splitting times  of distinct $k$-tuples of particles alive at time $T$ under the event of $\{Z_T\ge k\}$, we have the following relationship between $\mathbb{P}^{(k)}_{{\rm unif},T }$ and $\mathbb{Q}^{(k)}_{\theta, T}$ which will 
be very useful for our purposes. 

\begin{lemma} Let $f$ be a measurable functional on the genealogies of $k$-tuples of particles. Then
\begin{equation}\label{punif1}
\mathbb{E}^{(k)}_{{\rm unif},T }\left[ f(\xi_T)\Big|Z_T\ge k\right]=\mathbb{Q}^{(k)}_{\theta, T}\left[\frac{f(\xi_T)}{Z^{{(k)}}_T e^{-\theta Z_T}}\right]\mathbb{E}\left[Z^{{(k)}}_T e^{-\theta Z_T}\Big|Z_T\ge k\right].
\end{equation}
In particular, if $\tau_1, \ldots,\tau_{k-1}$ are the split times of the $k$ uniformly chosen individuals $\xi_T\in \mathcal{Z}_T^{(k)}$, then we have
\[
\begin{split}
\mathbb{P}^{(k)}_{{\rm unif},T }\Big(\tau_1\in \ud t_1, \ldots, &\tau_{k-1}\in \ud t_{k-1}\Big|Z_T\ge k\Big)\\
&=\mathbb{Q}^{(k)}_{\theta, T}\left(\frac{1}{Z_T^{{(k)}}}\mathbf{1}_{\{\tau_1\in \ud t_1, \ldots, \tau_{k-1}\in \ud t_{k-1}\}}\right)\mathbb{E}\Big[Z_T^{{(k)}}e^{-\theta Z_T}\Big | Z_{T}\ge k\Big].
\end{split}
\]
\end{lemma}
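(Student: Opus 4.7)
The plan is a direct manipulation of the Radon-Nikodym derivatives already established for $\mathbb{Q}^{(k)}_{\theta,T}$ and $\mathbb{P}^{(k)}_{\rm unif,T}$ with respect to the common reference measure $\mathbb{P}^{(k)}$. The idea is to compute the RHS of \eqref{punif1} by unfolding the change of measure \eqref{ksizeunif}, performing the crucial cancellation that arises from the spine sampling law \eqref{mea-k-spine}, and then recognising the result as the LHS via the definition \eqref{punif}.

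First I would write, using \eqref{ksizeunif},
\begin{align*}
\mathbb{Q}^{(k)}_{\theta,T}\!\left[\frac{f(\xi_T)}{Z_T^{(k)} e^{-\theta Z_T}}\right]
= \frac{1}{\mathbb{E}^{(k)}\!\left[Z_T^{(k)} e^{-\theta Z_T}\right]}\, \mathbb{E}^{(k)}\!\left[\frac{\mathbf{1}_{A_{k,T}}\, f(\xi_T)}{Z_T^{(k)}} \prod_{i=1}^k \prod_{v \prec \xi_T^{(i)}} L_v\right].
\end{align*}
Then I would condition on $\mathcal{F}_T$ and apply \eqref{mea-k-spine}. The crucial cancellation is that averaging over spine choices introduces a factor $\prod_i\prod_{v\prec u_i}(1/L_v)$ coming from the conditional spine law, which exactly cancels the product of $L_v$'s appearing in the change of measure, leaving
\begin{align*}
\mathbb{E}^{(k)}\!\left[\frac{\mathbf{1}_{A_{k,T}}\, f(\xi_T)}{Z_T^{(k)}} \prod_{i=1}^k \prod_{v \prec \xi_T^{(i)}} L_v \,\bigg|\, \mathcal{F}_T\right] = \frac{\mathbf{1}_{\{Z_T \geq k\}}}{Z_T^{(k)}} \sum_{\underline{u} \in \mathcal{Z}_T^{(k)}} f(\underline{u}).
\end{align*}

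Taking full $\mathbb{P}^{(k)}$-expectation and comparing with the definition \eqref{punif} of $\mathbb{P}^{(k)}_{\rm unif,T}$ (noting that $f$ depends only on the underlying genealogy, so the $\mathbb{P}^{(k)}$-expectation coincides with the $\mathbb{P}$-expectation), one obtains
\begin{align*}
\mathbb{E}^{(k)}\!\left[\frac{\mathbf{1}_{A_{k,T}}\, f(\xi_T)}{Z_T^{(k)}} \prod_{i=1}^k \prod_{v \prec \xi_T^{(i)}} L_v\right] = \mathbb{P}(Z_T \geq k)\, \mathbb{E}^{(k)}_{\rm unif,T}\!\left[f(\xi_T) \,\big|\, Z_T \geq k\right].
\end{align*}
Since $Z_T^{(k)} e^{-\theta Z_T}$ vanishes on $\{Z_T < k\}$, we have $\mathbb{E}^{(k)}[Z_T^{(k)} e^{-\theta Z_T}] = \mathbb{P}(Z_T \geq k) \cdot \mathbb{E}[Z_T^{(k)} e^{-\theta Z_T} \mid Z_T \geq k]$, and rearranging yields \eqref{punif1}. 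The second assertion is then an immediate specialisation: take $f(\xi_T)$ to be the indicator that the split times of the sampled $k$-tuple lie in the infinitesimal intervals $\ud t_1,\ldots,\ud t_{k-1}$, since this is a measurable functional of the genealogy of $\xi_T$.

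This argument presents no substantive obstacle; the only subtlety is careful book-keeping of the event $\{Z_T \geq k\}$ and of which base measure one is working under at each step—distinguishing expectations under $\mathbb{P}^{(k)}$, which are necessary for manipulating spine-weighted quantities via \eqref{mea-k-spine}, from those under $\mathbb{P}$, which appear once the spine randomness has been integrated out and $A_{k,T}$ has been replaced by $\{Z_T\geq k\}$.
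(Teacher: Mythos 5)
Your proposal is correct and is essentially the paper's argument run in reverse: you unfold the Radon--Nikodym derivative \eqref{newprob} and cancel the $\prod L_v$ factors against \eqref{mea-k-spine}, thereby re-deriving the uniform spine law \eqref{qk-form} inline, whereas the paper quotes \eqref{qk-form} to compute $\mathbb{Q}^{(k)}_{\theta,T}[f(\xi_T)\mid\mathcal{F}_T]$ directly and then undoes the $\mathcal{F}_T$-restricted change of measure \eqref{newprob1}. The substantive ingredients (the spine-weight cancellation, the $\mathcal{F}_T$-conditioning, and the observation that $Z_T^{(k)}e^{-\theta Z_T}$ vanishes off $\{Z_T\geq k\}$) are the same in both.
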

\begin{proof} We first observe from \eqref{qk-form} that for a functional along the spines $\xi_{T}=(\xi^{(1)}_T, \ldots, \xi^{(k)}_T)$, we have
\[
\begin{split}
\mathbb{Q}^{(k)}_{\theta, T}\left[f(\xi_T)|\mathcal{F}_T\right]&=\mathbb{Q}^{(k)}_{\theta, T}\left[\sum_{u\in\mathcal{Z}^{(k)}_T}\mathbf{1}_{\{\xi_T=\underline{u}\}}f(\underline{u})\bigg|\mathcal{F}_T\right]\\
&=\sum_{\underline{u}\in\mathcal{Z}^{(k)}_T}f(\underline{u})\mathbb{Q}^{(k)}_{\theta, T}(\xi_T=\underline{u}|\mathcal{F}_T)\\
&=\frac{1}{Z^{{(k)}}_T}\sum_{\underline{u}\in\mathcal{Z}^{(k)}_T}f(\underline{u}).
\end{split}
\]
Hence from \eqref{punif} and the above identity, it follows
\[
\begin{split}
\mathbb{E}^{(k)}_{{\rm unif},T }\left[ f(\xi_T)\Big|Z_T\ge k\right]&=\mathbb{E}\left[\frac{1}{Z_T^{{(k)}}}\sum_{\underline{u}\in\mathcal{Z}^{(k)}_T}f(\underline{u})\Bigg|Z_T\ge k\right]\\
&=\mathbb{E}\left[\mathbb{Q}^{(k)}_{\theta, T}\left[f(\xi_T)|\mathcal{F}_T\right]\Bigg|Z_T\ge k\right].\\
\end{split}
\]
Finally using \eqref{newprob1}, we deduce
\[
\mathbb{E}^{(k)}_{{\rm unif},T }\left[ f(\xi_T)\Big|Z_T\ge k\right]=\mathbb{Q}^{(k)}_{\theta, T}\left[\frac{f(\xi_T)}{Z^{{(k)}}_T e^{-\theta Z_T}}\right]\mathbb{E}\left[Z^{{(k)}}_T e^{-\theta Z_T}\Big|Z_T\ge k\right].
\]
This completes the proof. 
\end{proof}
As we will show in the next section,  one can describe the spine behaviour completely under $\mathbb{Q}^{(k)}_{\theta, T}$ and  hence implicitly, thanks to the above result, under $\mathbb{P}^{(k)}_{{\rm unif},T }$.  
We also observe that if we choose $\theta$ to depends on $T$ appropriately, the term on the far right-hand side of \eqref{punif1} can be computed using a relevant Yaglom theorem. (For instance, in the finite variance case, we have $Z_T/T$ converges in distribution to an exponential distribution as $T\rightarrow\infty$.) Further, this will similarly be the case for other individual terms appearing in the description of $\mathbb{Q}^{(k)}_{\theta, T}$. This scaling procedure will lead to a limiting $k$-spine construction, and ultimately determine the asymptotic genealogies from uniform samples at large times, as desired.

\section{Behaviour of Galton-Watson with $k$-spines under $\mathbb{Q}^{(k)}_{\theta, T}$.} \label{sec:Q}

\subsection{Behaviour of the spines and births off the spine}
In this section, we are interested in computing explicitly some functionals of Galton-Watson trees with $k$-spines under $\mathbb{Q}^{(k)}_{\theta, T}$ that will prove essential in the forthcoming sections.  
Before delving into the proofs, let us give a brief overview of some key results in this section:
\begin{itemize}
\item According to Lemma \ref{lem:bots}, 
if a particle is carrying $j \geq 1$ spines, then `births-off-the-spine' (that is, births along this particle after which all of the $j$ spines follow the same offspring particle) occur at rate 
\[
r\mathbb{E}\left[L\left(\mathbb{E}[e^{-\theta Z_{T-t}}]\right)^{L-1}\right]
\]
at time $t$, and when that they do occur, the number of offspring are distributed according to the size-biased probability distribution
\begin{align*}
\mathbb{P}( \text{A birth-off-the-spine at time $t$ has size $\ell$} ) = \frac{ \ell \mathbb{E}[e^{ -\theta Z_{T-t} } ]^{\ell-1} ] p_\ell}{ \mathbb{E}[L \mathbb{E}[e^{-\theta Z_t} ]^{L-1} ] } .
\end{align*}
It is important to note that both these quantities (that is, the rate and offspring distribution) for the births off the spine are independent of $j$, the number of spines being carried. 
\item Another main result of this section is Lemma \ref{lem:Qmain}, which 
reveals that
\begin{align}
 \mathbb{Q}^{(k)}_{\theta, T}&\Big(\tau_1 \in \ud t_1,\ldots,\tau_m \in \ud t_m, \mathcal{T}(\xi) = (\beta_0,\ldots,\beta_m), L_{\tau_1} = \ell_1,\ldots, L_{\tau_m} = \ell_m \Big) \nonumber \\
& \hspace{4cm}= \frac{F_T'(e^{-\theta}) }{\mathbb{E}[Z_T^{(k)}e^{ - \theta Z_T }]}  \prod_{ i = 1}^m \ell_i^{(g_i)}\left(\mathbb{E}[e^{-\theta Z_{T-t_i}}]\right)^{\ell-g}p_{\ell_i} F_{T-t_i}'(e^{-\theta})^{g_i-1},
\end{align}
where $F_t(s)=\mathbb{E}[s^{Z_t}]$, for $|s|<1$ and $t\ge 0$,    $0  < t_1 < \ldots < t_m < T$ are times, $(\beta_0,\ldots,\beta_m)$ is a splitting process of $\{1,\ldots,k\}$ with split sizes $g_1,\ldots,g_m$, and $\ell_1,\ldots,\ell_m$ are integers with $\ell_i \geq g_i$. 

\end{itemize}
As mentioned, the spine change of measure results under $\mathbb{Q}^{(k)}_{\theta, T}$ of this section generalise some earlier spine approaches for the continuous time GW found in \cite{J19} (section 4) and \cite{HJR20} (section 4), with the significant addition of exponentially $\theta$-discounting by the final population size in addition to $k$-size biasing. 
Whilst this introduces extra complexity, all key properties are preserved, and the discounting is crucial to permit the spine approach to be applied when heavy tailed offspring distributions are present.

First, we compute the event that there are no births along the spine by time $t$.  The proof of the following result follows similar arguments as those of Lemma 9 in \cite{HJR20}, we provide its proof for the sake of completeness.
\begin{lemma}\label{nobirthst} Let $\chi_1$ be the time of the first birth event in the entire population. 
Then
\[
\mathbb{Q}^{(k)}_{\theta, T}(\chi_1>t)=e^{-rt}\frac{\mathbb{E}^{(k)}[Z^{{(k)}}_{T-t}e^{-\theta Z_{T-t}}]}{\mathbb{E}^{(k)}[Z^{{(k)}}_{T}\e^{-\theta Z_{T}}]}.
\]
\end{lemma}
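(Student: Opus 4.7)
The plan is to evaluate $\mathbb{Q}^{(k)}_{\theta,T}(\chi_1 > t)$ directly from the Radon-Nikodym derivative \eqref{newprob}, using the branching Markov property of Lemma \ref{branlem} to reduce the computation on the event $\{\chi_1 > t\}$ to a spine-expectation over the remaining time interval $[t,T]$.

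First I would observe that $\{\chi_1 > t\}$ is precisely the event that the initial ancestor $\emptyset$ has not yet branched by time $t$; under $\mathbb{P}^{(k)}$ this occurs with probability $e^{-rt}$. On this event, the single particle alive at time $t$ carries all $k$ spines, and the product $\prod_{i=1}^k \prod_{v \prec \xi_T^{(i)}} L_v$ receives no contributions from ancestors in $[0,t]$ (since $\emptyset$ is the only such ancestor and it has not branched), so the product equals the analogous product formed over the subtree rooted at $\emptyset$ restricted to $[t,T]$.

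Next, writing out the change of measure and then conditioning on the time of the first branching, I would use the (strong) Markov branching property: under $\mathbb{P}^{(k)}$, conditional on $\{\chi_1 > t\}$, the evolution after time $t$ is an independent copy of a Galton-Watson process with $k$ spines of duration $T-t$. Thus
\begin{align*}
\mathbb{E}^{(k)}\!\left[\mathbf{1}_{\{\chi_1 > t\}}\mathbf{1}_{A_{k,T}}\Big(\prod_{i=1}^k \prod_{v \prec \xi_T^{(i)}} L_v\Big)e^{-\theta Z_T}\right]
= \mathbb{P}^{(k)}(\chi_1 > t)\cdot\mathbb{E}^{(k)}\!\left[\mathbf{1}_{A_{k,T-t}}\Big(\prod_{i=1}^k \prod_{v \prec \xi_{T-t}^{(i)}} L_v\Big)e^{-\theta Z_{T-t}}\right].
\end{align*}
The first factor is $e^{-rt}$, and by the key identity \eqref{expnprob} applied to time horizon $T-t$, the second factor equals $\mathbb{E}^{(k)}[Z_{T-t}^{(k)} e^{-\theta Z_{T-t}}]$.

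Finally, dividing by the normalising constant $\mathbb{E}^{(k)}[Z_T^{(k)} e^{-\theta Z_T}]$ appearing in \eqref{newprob} yields the claimed identity. There is no real obstacle here; the only point requiring care is verifying that on $\{\chi_1 > t\}$ the spine product factorises cleanly across the time-$t$ split (which holds because $\emptyset$ has not branched, so no $L_v$ with $v \prec \xi_T^{(i)}$ arises from the interval $[0,t]$), after which the argument reduces to the Markov branching property and an application of \eqref{expnprob}.
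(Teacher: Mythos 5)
Your argument is correct and follows essentially the same route as the paper's: both compute the $\mathbb{Q}^{(k)}_{\theta,T}$-probability via the Radon-Nikodym derivative \eqref{newprob}, factor out $\mathbb{P}^{(k)}(\chi_1 > t) = e^{-rt}$, invoke the Markov (branching) property to reduce the remaining expectation to a horizon of length $T-t$ started from a single particle carrying all $k$ spines, and then apply identity \eqref{expnprob}. Your explicit observation that on $\{\chi_1 > t\}$ the spine product $\prod_{i=1}^k\prod_{v\prec\xi_T^{(i)}} L_v$ picks up no contribution from $[0,t]$ is a helpful clarification of the step that the paper leaves implicit when it conditions on $\{Z_t=1\}$.
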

\begin{proof} Recall that  $A_{k,T}$ denotes the event that spines are separated and alive by time $T$. Then 
\begin{align*}
\mathbb{Q}^{(k)}_{\theta, T}(\chi_1>t)&=\frac{\mathbb{E}^{(k)}\left[\left(\prod_{i=1}^k\prod_{v\prec \xi^{(i)}_T}L_v\right) e^{-\theta Z_{T}}\mathbf{1}_{\{\chi_1>t, A_{k,T}\}}\right]}{\mathbb{E}^{(k)}[Z^{{(k)}}_{T}e^{-\theta Z_{T}}]}\\
&=\mathbb{P}(\chi_1>t)\frac{\mathbb{E}^{(k)}\left[\left(\prod_{i=1}^k\prod_{v\prec\xi^{(i)}_T}L_v\right) e^{-\theta Z_{T}}\mathbf{1}_{ A_{k,T}}\Bigg| Z_t=1\right]}{\mathbb{E}^{(k)}[Z^{{(k)}}_{T}e^{-\theta Z_{T}}]}\\
&=e^{-rt}\frac{\mathbb{E}^{(k)}[Z^{{(k)}}_{T-t}e^{-\theta Z_{T-t}}]}{\mathbb{E}^{(k)}[Z^{{(k)}}_{T}e^{-\theta Z_{T}}]},
\end{align*}
where the third equality follows from the Markov property and identity \eqref{expnprob}.
\end{proof}
Recall that we call birth events that occur along the spines but which do not occur at spine splitting events, {\it births off the spine}. The following Lemma tell us the distribution of the number of children at the first birth off spines at time $t$.
\begin{lemma}\label{firstbe} Let $B_{\chi_1}$ be the event that spines stay together at time $\chi_1$ and $L_{\chi_1}$ be the number of offspring at the first birth event, then
\begin{equation*}
\mathbb{Q}^{(k)}_{\theta, T}(\chi_1\in {\rm d}t, L_{\chi_1}=\ell; B_{\chi_1} )=\ell\left(\mathbb{E}[e^{-\theta Z_{T-t}}]\right)^{\ell-1}p_\ell  re^{-rt}\frac{\mathbb{E}^{(k)}[Z^{{(k)}}_{T-t}e^{-\theta Z_{T-t}}]}{\mathbb{E}^{(k)}[Z^{{(k)}}_{T}e^{-\theta Z_{T}}]}{\rm d} t.
\end{equation*}
\end{lemma}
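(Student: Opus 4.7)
The plan is to apply the Radon--Nikodym definition of $\mathbb{Q}^{(k)}_{\theta,T}$ from \eqref{newprob} and use the Markov property at the first branching time $\chi_1$ to split the expectation into a product of tractable factors, much as in the proof of Lemma \ref{nobirthst}. So I would begin by writing
\[
\mathbb{Q}^{(k)}_{\theta,T}(\chi_1\in\mathrm{d}t,\,L_{\chi_1}=\ell,\,B_{\chi_1})=\frac{1}{\mathbb{E}^{(k)}[Z_T^{(k)}e^{-\theta Z_T}]}\,\mathbb{E}^{(k)}\!\left[\mathbf{1}_{A_{k,T}}\Big(\prod_{i=1}^{k}\prod_{v\prec\xi^{(i)}_T}L_v\Big)e^{-\theta Z_T}\mathbf{1}_{\{\chi_1\in\mathrm{d}t,\,L_{\chi_1}=\ell,\,B_{\chi_1}\}}\right].
\]

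Next I would condition on the first branching event. The initial particle lives an Exp$(r)$ time, so $\mathbb{P}(\chi_1\in\mathrm{d}t)=re^{-rt}\mathrm{d}t$; given $\chi_1=t$, it has $\ell$ offspring with probability $p_\ell$; and given both, each of the $k$ spines independently chooses its offspring uniformly from the $\ell$ available, so the event $B_{\chi_1}$ that all $k$ spines follow the same child $u$ has probability $\ell\cdot\ell^{-k}=\ell^{1-k}$. On this event, $L_\emptyset=\ell$ is a strict ancestor of every $\xi^{(i)}_T$, so the product over lineages factors as
\[
\prod_{i=1}^{k}\prod_{v\prec\xi^{(i)}_T}L_v=\ell^{k}\prod_{i=1}^{k}\prod_{v\prec\tilde\xi^{(i)}_T}L_v,
\]
where $\tilde\xi^{(i)}_T$ is the time-$(T-t)$ position of spine $i$ inside the subtree rooted at $u$. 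The total population similarly decomposes as $Z_T=Z^{(u)}_{T-t}+\sum_{j\neq u}Z^{(j)}_{T-t}$, where the $\ell$ subtrees emanating from the offspring at time $t$ evolve independently, the $\ell-1$ sibling subtrees carry no spines, and the subtree at $u$ carries all $k$.

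Using this independence together with the branching Markov property, the inner expectation factors. The $\ell-1$ spine-free sibling subtrees contribute $\big(\mathbb{E}[e^{-\theta Z_{T-t}}]\big)^{\ell-1}$, while the spine-bearing subtree rooted at $u$ contributes
\[
\mathbb{E}^{(k)}\!\left[\mathbf{1}_{A_{k,T-t}}\Big(\prod_{i=1}^{k}\prod_{v\prec\tilde\xi^{(i)}_T}L_v\Big)e^{-\theta Z_{T-t}}\right]=\mathbb{E}^{(k)}[Z_{T-t}^{(k)}e^{-\theta Z_{T-t}}],
\]
where the last identity is exactly \eqref{expnprob} applied at time $T-t$. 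Multiplying everything together, the factor $\ell^{1-k}\cdot\ell^k=\ell$ emerges, and after dividing by $\mathbb{E}^{(k)}[Z_T^{(k)}e^{-\theta Z_T}]$ we obtain the claimed formula.

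There is no serious obstacle here; the only point requiring a little care is the bookkeeping of which strict ancestors contribute to $\prod_{v\prec\xi^{(i)}_T}L_v$ and the resulting factor of $\ell^k$ arising from the root, together with the independent combinatorial factor $\ell^{1-k}$ coming from the uniform choice of offspring by each of the $k$ spines on the event $B_{\chi_1}$. Once these are correctly identified, the reduction to \eqref{expnprob} in the subtree rooted at $u$ closes the argument.
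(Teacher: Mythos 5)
Your proposal is correct and follows essentially the same route as the paper's proof: condition on the first branching event, use the uniform spine choice to get the factor $\ell^{1-k}$, extract the factor $\ell^k$ from the root in the product $\prod_{i}\prod_{v\prec\xi^{(i)}_T}L_v$, factorise the $\ell-1$ spine-free sibling subtrees, and apply \eqref{expnprob} to the subtree carrying the spines. The paper carries out the same bookkeeping more compactly (appealing to the Markov property and \eqref{expnprob} in one step), but the argument is identical in substance.
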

\begin{proof} We first observe that if the first  particle has $\ell$ offspring then the  probability  all $k$ spines follow the same one of these offspring is $1/\ell^{k-1}$. Thus  from the Markov property, Lemma \ref{nobirthst} and identity \eqref{expnprob}, we have 
\begin{align*}
\mathbb{Q}^{(k)}_{\theta, T}(\chi_1\in {\rm d}t, &L_{\chi_1}=\ell; B_{\chi_1} )=\frac{\mathbb{E}^{(k)}\left[\left(\prod_{i=1}^k\prod_{v\prec\xi^{(i)}_T}L_v\right) e^{-\theta Z_{T}}\mathbf{1}_{\{ A_{k,T}; \chi_1\in {\rm d}t, L_{\chi_1}=\ell; B_{\chi_1}\}} \right]}{\mathbb{E}^{(k)}[Z^{{(k)}}_{T}e^{-\theta Z_{T}}]}\\
&=re^{-rt}\ud t \,\,p_\ell\left(\frac{1}{\ell}\right)^{k-1}\ell^k \left(\mathbb{E}[e^{-\theta Z_{T-t}}]\right)^{\ell -1}\frac{\mathbb{E}^{(k)}[Z^{{(k)}}_{T-t}e^{-\theta Z_{T-t}}]}{\mathbb{E}^{(k)}[Z^{{(k)}}_{T}e^{-\theta Z_{T}}]}\\
&=\ell\left(\mathbb{E}[e^{-\theta Z_{T-t}}]\right)^{\ell-1}p_\ell  re^{-rt}\frac{\mathbb{E}^{(k)}[Z^{{(k)}}_{T-t}e^{-\theta Z_{T-t}}]}{\mathbb{E}^{(k)}[Z^{{(k)}}_{T}e^{-\theta Z_{T}}]}{\rm d} t.
\end{align*}
\end{proof}

From the previous lemma, by summing over all possible values of $\ell$, we deduce that
\begin{align*}
\mathbb{Q}^{(k)}_{\theta, T}(\chi_1\in {\rm d}t,  B_{\chi_1} )=\mathbb{E}\left[L\left(\mathbb{E}[e^{-\theta Z_{T-t}}]\right)^{L-1}\right] re^{-rt}\frac{\mathbb{E}^{(k)}[Z^{{(k)}}_{T-t}e^{-\theta Z_{T-t}}]}{\mathbb{E}^{(k)}[Z^{{(k)}}_{T}e^{-\theta Z_{T}}]}{\rm d} t.
\end{align*}
The following result computes the probability of the event that spines do not split apart by time $t$.
\begin{lemma} \label{Lspinenot}Let $\tau_1$ be the first time that the spines split apart, then
\begin{equation}\label{Spinenot}
\mathbb{Q}^{(k)}_{\theta, T}(\tau_1>t)=\exp\left\{-\int_0^t r\left(1-\mathbb{E}\left[L\left(\mathbb{E}[e^{-\theta Z_{T-s}}]\right)^{L-1}\right]\right)\ud s\right\} \frac{\mathbb{E}^{(k)}[Z^{(k)}_{T-t}e^{-\theta Z_{T-t}}]}{\mathbb{E}^{(k)}[Z^{(k)}_{T}e^{-\theta Z_{T}}]}.
\end{equation}
\end{lemma}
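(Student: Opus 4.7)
The plan is to evaluate $\mathbb{Q}^{(k)}_{\theta,T}(\tau_1 > t)$ directly from the Radon-Nikodym derivative \eqref{newprob}, in the same spirit as the proof of Lemma \ref{nobirthst}. The key structural observation is that on the event $A_{k,T}$, the event $\{\tau_1 > t\}$ is equivalent to requiring that all $k$ spines share a common ancestor lying in $\mathcal{Z}_t$. Conditioning on $\mathcal{F}_T$ and arguing exactly as for \eqref{expnprob}, the factor $\prod_{i=1}^k \prod_{v \prec \xi^{(i)}_T} L_v$ in the change of measure cancels with $\mathbb{P}^{(k)}(\xi_T = \underline u \mid \mathcal{F}_T)$ from \eqref{mea-k-spine}. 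What remains is the count, for each $w \in \mathcal{Z}_t$, of ordered $k$-tuples of distinct time-$T$ descendants of $w$:
\[
\mathbb{E}^{(k)}\!\left[\mathbf{1}_{A_{k,T}\cap\{\tau_1 > t\}} \,e^{-\theta Z_T} \prod_{i=1}^k \prod_{v \prec \xi^{(i)}_T} L_v \,\Big|\, \mathcal{F}_T\right] = e^{-\theta Z_T} \sum_{w \in \mathcal{Z}_t} D_T(w)^{(k)},
\]
where $D_T(w)$ denotes the number of descendants of $w$ alive at time $T$.

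The next step is to evaluate this expectation using the branching property. Factorising $e^{-\theta Z_T} = \prod_{w \in \mathcal{Z}_t} e^{-\theta D_T(w)}$, conditioning on $\mathcal{F}_t$, and using that the subtrees rooted at the distinct particles of $\mathcal{Z}_t$ are independent copies of the Galton-Watson tree of duration $T-t$, I obtain
\begin{align*}
\mathbb{E}\!\left[e^{-\theta Z_T}\!\sum_{w \in \mathcal{Z}_t} D_T(w)^{(k)}\right] &= \mathbb{E}[Z_{T-t}^{(k)} e^{-\theta Z_{T-t}}]\cdot\mathbb{E}\!\left[Z_t\, F_{T-t}(e^{-\theta})^{Z_t-1}\right]\\
&= \mathbb{E}[Z_{T-t}^{(k)} e^{-\theta Z_{T-t}}]\cdot F_t'\!\left(F_{T-t}(e^{-\theta})\right),
\end{align*}
using the identity $F_t'(s) = \mathbb{E}[Z_t s^{Z_t-1}]$. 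The Markov semigroup relation $F_T = F_t \circ F_{T-t}$ then gives $F_t'(F_{T-t}(s)) = F_T'(s)/F_{T-t}'(s)$, and dividing by the normalising constant in \eqref{newprob} yields
\[
\mathbb{Q}^{(k)}_{\theta,T}(\tau_1 > t) = \frac{F_T'(e^{-\theta})}{F_{T-t}'(e^{-\theta})}\cdot \frac{\mathbb{E}^{(k)}[Z_{T-t}^{(k)} e^{-\theta Z_{T-t}}]}{\mathbb{E}^{(k)}[Z_T^{(k)} e^{-\theta Z_T}]}.
\]

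Finally, to translate the ratio of derivatives into the exponential integral appearing in \eqref{Spinenot}, I would invoke the Kolmogorov forward equation $\partial_u F_u(s) = r(f(F_u(s)) - F_u(s))$ for the continuous-time Galton-Watson PGF. Differentiating in $s$ and dividing by $F_u'(s)$ gives $\partial_u \log F_u'(s) = r(f'(F_u(s)) - 1)$. Integrating in $u$ from $T-t$ to $T$, applying the substitution $u = T-s$, and recalling that $f'(x) = \mathbb{E}[L x^{L-1}]$ and $F_{T-s}(e^{-\theta}) = \mathbb{E}[e^{-\theta Z_{T-s}}]$, I obtain
\[
\log\frac{F_T'(e^{-\theta})}{F_{T-t}'(e^{-\theta})} = -\int_0^t r\!\left(1-\mathbb{E}\!\left[L\,\mathbb{E}[e^{-\theta Z_{T-s}}]^{L-1}\right]\right)ds,
\]
which, together with the previous display, produces \eqref{Spinenot}. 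I do not expect a serious obstacle: the combinatorial reduction is a direct analogue of \eqref{expnprob}, and the ODE manipulation is standard. The step requiring the most care is the branching decomposition, where one must correctly separate the expected $k$-size-biased contribution from the single $w \in \mathcal{Z}_t$ carrying all the spine descendants from the multiplicative factor $F_{T-t}(e^{-\theta})^{Z_t-1}$ coming from the remaining particles of $\mathcal{Z}_t$.
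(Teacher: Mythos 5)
Your proof is correct, and it follows a genuinely different route from the paper's own proof of Lemma~\ref{Lspinenot}. The paper's proof for this lemma expands the Radon--Nikodym derivative along the single spine branch, decomposes $Z_T$ as the spine contribution plus the sum of sub-populations born off the spine at each birth event, and then evaluates the resulting expectation using Campbell's formula for the Poisson process of births off the spine; this yields the exponential integral factor directly, with the ratio $\mathbb{E}^{(k)}[Z^{(k)}_{T-t}e^{-\theta Z_{T-t}}]/\mathbb{E}^{(k)}[Z^{(k)}_{T}e^{-\theta Z_{T}}]$ coming from the Markov property applied at time $t$. Your approach instead first derives the equivalent generating-function form --- which is what the paper separately records as Lemma~\ref{Lspinenotbis} --- by conditioning on $\mathcal{F}_T$, exploiting the cancellation from \eqref{mea-k-spine} to reduce the spine expectation to the count $\sum_{w\in\mathcal{Z}_t} D_T(w)^{(k)}$, and then applying the branching property at time $t$; you then convert $F'_T(e^{-\theta})/F'_{T-t}(e^{-\theta})$ into the exponential integral via the forward Kolmogorov equation. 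The paper does essentially the same branching decomposition in its Lemma~\ref{Lspinenotbis} (conditioning through $\mathcal{F}^{(k)}_t$ rather than $\mathcal{F}_T$ as you do, which is a cosmetic difference), and explicitly notes after that lemma that reconciling the two representations reduces to the standard identity $\exp\{-\int_0^t r(1-\mathbb{E}[L(\mathbb{E}[e^{-\theta Z_{T-s}}])^{L-1}])\,\ud s\} = F_t'(F_{T-t}(e^{-\theta}))$ (citing Lemma 2.5 of \cite{CJ}) --- this is exactly the ODE computation you carry out in your final step. What your route buys is a uniform derivation that never invokes Campbell's formula or the Poissonian structure of births off the spine, at the modest cost of needing the forward Kolmogorov equation; the paper's own proof, by contrast, makes the probabilistic origin of the exponential factor more visible, interpreting it as the Laplace functional of the off-spine birth process.
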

\begin{proof} Let   $G_t$ be the event that no splitting has occurred before time $t$, thus from the definition of $\mathbb{Q}^{(k)}_{\theta, T}$ we have
\begin{align*}
\mathbb{Q}^{(k)}_{\theta, T}(\tau_1>t)&=\frac{\mathbb{E}^{(k)}\left[\left(\prod_{i=1}^k\prod_{v\prec\xi_T^{(i)}}L_v \right)e^{-\theta Z_{T}}\mathbf{1}_{\{\tau_1>t; \,\, A_{k,T}\}}\right]}{\mathbb{E}^{(k)}[Z^{(k)}_{T}e^{-\theta Z_{T}}]}\\
&=\frac{\mathbb{E}^{(k)}\left[\left(\prod_{i=1}^k\prod_{v\prec \xi_T^{(i)}}L_v\right)\, e^{-\theta Z_{T}}\mathbf{1}_{\{\tau_1>t;  \,\, A_{k,T};  \,\, G_t\}}\right]}{\mathbb{E}^{(k)}[Z^{(k)}_{T}e^{-\theta Z_{T}}]}.
\end{align*}
Expanding the product, this further reduces to
\begin{align*}
\mathbb{Q}^{(k)}_{\theta, T}(\tau_1>t)&=\frac{1}{\mathbb{E}^{(k)}[Z^{(k)}_{T}e^{-\theta Z_{T}}]}\mathbb{E}^{(k)}\left[\displaystyle\prod_{u\preceq \xi_t^{(1)}}\left(\frac{1}{L_{u}}\right)^{k-1}\mathbf{1}_{\{\tau_1>t; \,\, A_{k,T}\}}\right.\\
&\hspace{1cm}\times\left. \displaystyle\left(\prod_{i=1}^k\prod_{\substack{\xi_t^{(i)}\prec v\prec \xi_T^{(i)}}}L_v\prod_{\substack{w\preceq \xi_t^{(i)},}}L_w\right)e^{-\theta Z^{(1)}_{T-t}}\prod_{u\preceq \xi_t^{(1)}}e^{-\theta\sum_{k=1}^{L_{u}-1} Z_{T-\sigma_{u}}^{u,k}}\right]\\
&=\frac{1}{\mathbb{E}^{(k)}[Z^{(k)}_{T}e^{-\theta Z_{T}}]}\mathbb{E}^{(k)}\left[\mathbf{1}_{\{\tau_1>t; \,\, A_{k,T}\}}\displaystyle\prod_{u\preceq \xi^{(1)}_t}\left(\frac{1}{L_u}\right)^{k-1}\right.\\
&\hspace{1cm}\times\left.\displaystyle\left(\prod_{\substack{u\preceq \xi^{(1)}_t}}L^k_u\right)\left(\prod_{i=1}^k\prod_{\xi_t^{(i)}\prec w\prec\xi_T^{(i)}}L_w\right)e^{-\theta Z^{(1)}_{T-t}}\prod_{u\preceq \xi^{(1)}_t}e^{-\theta\sum_{j=1}^{L_u-1} Z_{T-\sigma_u}^{u,j}}\right],
\end{align*}
where $\sigma_u$ denotes the time of  the birth off spine of particle $u$
 and  $Z^{u,j}_{T-\sigma_u}$ represents the contribution of the $j$-child of the particle $u$ to the population alive at time $T$.  Moreover,  $Z^{(1)}_{T-t}$ denotes the contribution of  $\xi^{(1)}_t$  to the population alive at time $T$ implying that  $Z_T$ can be rewritten as follows
 \[
 Z_T=\sum_{u\preceq\xi_t^{(1)}}\sum_{j=1}^{L_u-1} Z_{T-\sigma_u}^{u,j} + Z^{(1)}_{T-t}.
 \]
 It is important to note that for each node $u$, the random variables $(Z^{u,j}_{T-\sigma_u}, j\ge 1)$ are i.i.d., conditionally on $\sigma_u$, and that for $u\prec v\preceq \xi^{(1)}_t$ the families $(Z^{u,j}_{T-\sigma_u}, j\ge 1)$ and $(Z^{v,j}_{T-\sigma_v}, j\ge 1)$ are independent, conditionally on $(\sigma_u, \sigma_v)$. Besides, we simplify the above identity  by observing  that
\[
\mathbf{1}_{\{\tau_1>t; \,\, A_{k,T}\}}\prod_{u\preceq \xi^{(1)}_t}\left(\frac{1}{L_u}\right)^{k-1}=\mathbf{1}_{\{A_{k;t,T}\}}\prod_{u\preceq \xi^{(1)}_t}\left(\frac{1}{L_u}\right)^{k-1},
\]
where $A_{k;t,T}$ denotes the event that spines split over $(t,T]$ and use the Markov branching property at time $t$ to obtain
\begin{align*}
\mathbb{Q}^{(k)}_{\theta, T}(\psi_1>t)&=\frac{\mathbb{E}^{(k)}\left[\displaystyle\mathbf{1}_{A_{k;t,T}}\prod_{u\preceq \xi_t^{(1)}}L_u\,e^{-\theta\sum_{i=1}^{L_u-1} Z_{T-\sigma_u}^{u,i}}\left(\prod_{i=1}^k\prod_{\xi_t^{(i)}\prec v\prec \xi_T^{(i)}}L_v\right)e^{-\theta Z^{(1)}_{T-t}}\right]}{\mathbb{E}^{(k)}[Z^{(k)}_{T}e^{-\theta Z_{T}}]}\\
&=\mathbb{E}^{(k)}\left[\prod_{u\preceq \xi_t^{(1)}}L_u\,e^{-\theta\sum_{i=1}^{L_u-1} Z_{T-\sigma_u}^{u,i}}\right]\frac{\mathbb{E}^{(k)}\Big[Z^{(k)}_{T-t}e^{-\theta Z_{T-t}}\Big]}{\mathbb{E}^{(k)}\Big[Z^{(k)}_{T}e^{-\theta Z_{T}}\Big]}\\
&=\exp\left\{-\int_0^t r\left(1-\mathbb{E}\left[L\left(\mathbb{E}[e^{-\theta Z_{T-s}}]\right)^{L-1}\right]\right)\ud s\right\} \frac{\mathbb{E}^{(k)}\Big[Z^{(k)}_{T-t}e^{-\theta Z_{T-t}}\Big]}{\mathbb{E}^{(k)}\Big[Z^{(k)}_{T}e^{-\theta Z_{T}}\Big]},
\end{align*}
where in the last identity we have used Campbell's formula, i.e.
\begin{equation}\label{Campbell}
\begin{split}
\mathbb{E}^{(k)}\left[\prod_{u\preceq \xi_t^{(1)}}L_u\,e^{-\theta\sum_{i=1}^{L_u-1} Z_{T-\sigma_u}^{u,i}}\right]&=\mathbb{E}^{(k)}\left[\exp\left\{\sum_{i=1}^{\eta_t}\left(\ln (L_i)-\theta\sum_{j=1}^{L_i-1} Z_{T-\sigma_i}^{i,j}\right)\right\}\right]\\
&=\exp\left\{-\int_0^t r\left(1-\mathbb{E}\left[L\left(\mathbb{E}[e^{-\theta Z_{T-s}}]\right)^{L-1}\right]\right)\ud s\right\}.
\end{split}
\end{equation}
More precisely, let $\eta_t$ represents the number of births off spine up to time $t$ and $L_i$ the number of offspring of the $i$-th birth off spine. The process $(\eta_t,t\ge 0)$ is clearly  a Poisson  process with intensity $\lambda(t):=\int_0^t r \ud s$, under $\mathbb{P}^{(k)}$. Recall that in this case Campbell's formula is stated as follows: for  $Y_t=\sum_{i=1}^{\eta_t} X_i(\sigma_i)$, where $(X_i (\cdot), i\ge 1)$ are independent and $\sigma_i$ represents the time of the $i$-th arrival, we have
\[
\mathbb{E}\left[e^{-\theta Y_t}\right]=\exp\left\{\int_0^t\lambda(s) \left(1-\mathbb{E}\Big[e^{-\theta X(s)}\Big]\right)\ud s\right\}.
\]
This completes the proof.
\end{proof}

Whilst the above proof provides some clear probabilistic insight, note that an alternative approach to compute the event that all spines are togther at time $t$ was given in  \cite{HJR20}. 
More precisely in the current context, the probability $\mathbb{Q}^{(k)}_{\theta, T}(\psi_1>t)$  can be computed without using Campbell's formula and equivalently written in terms of the derivatives of $F_t(s)=\E(s^{Z_t})$, as follows.

\begin{lemma} \label{Lspinenotbis}Let $\tau_1$ be the first time that spines splits apart, then
\begin{equation}\label{Spinenotalt}
\begin{split}
\mathbb{Q}^{(k)}_{\theta, T}(\tau_1>t)&= \frac{F_{T-t}^{(k)}(e^{-\theta})}{F_T^{(k)}(e^{-\theta})}  \frac{F_T'(e^{-\theta})}{F_{T-t}'(e^{-\theta})}, 
\end{split}
\end{equation}
where  $F_t^{(j)}(s) := \frac{ \partial^j}{\partial s^j} F_t(s)$.
\end{lemma}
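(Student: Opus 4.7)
The plan is to start from the expression for $\mathbb{Q}^{(k)}_{\theta, T}(\tau_1>t)$ given in Lemma \ref{Lspinenot}, rewrite each factor in terms of the generating function $F_t(s) = \mathbb{E}[s^{Z_t}]$ evaluated at $s = e^{-\theta}$, and then invoke the Kolmogorov backward equation to convert the exponential term into a ratio of derivatives of $F$.

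First I would translate the two population-size expectations into generating-function quantities. Differentiating $F_T(s) = \mathbb{E}[s^{Z_T}]$ a total of $k$ times gives $F_T^{(k)}(s) = \mathbb{E}[Z_T^{(k)} s^{Z_T-k}]$, so evaluating at $s = e^{-\theta}$ yields
\[
\mathbb{E}^{(k)}[Z_T^{(k)} e^{-\theta Z_T}] = e^{-\theta k} F_T^{(k)}(e^{-\theta}),
\]
and the analogous identity at time $T-t$. The ratio of these two expectations in \eqref{Spinenot} therefore equals $F_{T-t}^{(k)}(e^{-\theta})/F_T^{(k)}(e^{-\theta})$, since the prefactors $e^{-\theta k}$ cancel. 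Next, since $f(u) = \mathbb{E}[u^L]$ has $f'(u) = \mathbb{E}[L u^{L-1}]$, and $\mathbb{E}[e^{-\theta Z_{T-s}}] = F_{T-s}(e^{-\theta})$, the integrand inside the exponential in \eqref{Spinenot} becomes $r\bigl(1 - f'(F_{T-s}(e^{-\theta}))\bigr)$.

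The main (and only nontrivial) step is then to identify
\[
\exp\!\left\{-\int_0^t r\bigl(1 - f'(F_{T-s}(e^{-\theta}))\bigr)\ud s\right\} = \frac{F_T'(e^{-\theta})}{F_{T-t}'(e^{-\theta})}.
\]
For this I would use the standard backward Kolmogorov equation for a continuous-time Galton-Watson process,
\[
\frac{\partial}{\partial u} F_u(s) = r\bigl(f(F_u(s)) - F_u(s)\bigr),
\]
differentiating in $s$ (and using that $F_u$ is smooth in $s \in [0,1)$ so differentiation and $\partial_u$ commute) to obtain the linear ODE in $u$
\[
\frac{\partial}{\partial u} F_u'(s) = r\bigl(f'(F_u(s)) - 1\bigr) F_u'(s).
\]
Solving this ODE gives $\log F_u'(s)$ as the integral of $r(f'(F_u(s))-1)$, and integrating between $u = T-t$ and $u = T$ yields
\[
\log \frac{F_T'(s)}{F_{T-t}'(s)} = -\int_0^t r\bigl(1 - f'(F_{T-\sigma}(s))\bigr)\ud \sigma,
\]
after the substitution $\sigma = T - u$. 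Evaluating at $s = e^{-\theta}$ gives exactly the required identity.

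Combining the two ingredients recasts the right-hand side of \eqref{Spinenot} into the claimed form
\[
\mathbb{Q}^{(k)}_{\theta, T}(\tau_1>t) = \frac{F_{T-t}^{(k)}(e^{-\theta})}{F_T^{(k)}(e^{-\theta})}\cdot\frac{F_T'(e^{-\theta})}{F_{T-t}'(e^{-\theta})}.
\]
I do not expect any serious obstacle: the first two ingredients are one-line generating-function manipulations, and the ODE step is standard once one writes down the backward equation. The only care required is to check that one may differentiate under the expectation in $s$ for $s \in (0,1)$ (which is routine for PGFs on compact subsets of the disk of convergence) and to keep track of the change of variable in the integral so that $\ud s$ in \eqref{Spinenot} corresponds to $-\ud u$ in the ODE integration.
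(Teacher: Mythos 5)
Your proof is correct, but it takes a genuinely different route from the one in the paper. The paper proves Lemma~\ref{Lspinenotbis} directly by a forward probabilistic argument: on the event $G_t$ that the spines have not split by time $t$, it decomposes $Z_T = Z_T' + Z_T''$ into the descendants of the spine-carrying particle at time $t$ and the rest, exploits their conditional independence and the branching Markov property to factor the numerator as $\mathbb{E}^{(k)}[Z_{T-t}^{(k)} e^{-\theta Z_{T-t}}]\,\mathbb{E}[Z_t\,F_{T-t}(e^{-\theta})^{Z_t-1}]$, recognises the second factor as $F_t'(F_{T-t}(e^{-\theta}))$, and then uses the semigroup identity $F_T = F_t \circ F_{T-t}$ to convert this into $F_T'(e^{-\theta})/F_{T-t}'(e^{-\theta})$. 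Your proof instead \emph{starts} from the already-established Lemma~\ref{Lspinenot}, translates the expectation ratios into derivatives of $F$ (being correctly careful that the $e^{-\theta k}$ prefactors cancel in the ratio, since strictly $\mathbb{E}[Z_T^{(k)}e^{-\theta Z_T}] = e^{-\theta k}F_T^{(k)}(e^{-\theta})$), and then reduces the exponential factor to $F_T'(e^{-\theta})/F_{T-t}'(e^{-\theta})$ by differentiating the backward Kolmogorov equation $\partial_u F_u(s) = r(f(F_u(s)) - F_u(s))$ in $s$ and solving the resulting linear first-order ODE for $F_u'(s)$. This is essentially the "standard calculation" the paper alludes to \emph{after} its proof, where it notes (citing Lemma~2.5 of \cite{CJ}) that the agreement of Lemmas~\ref{Lspinenot} and~\ref{Lspinenotbis} amounts to exactly the ODE identity you derive. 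The trade-off: the paper's argument is self-contained, not relying on Lemma~\ref{Lspinenot}, and yields direct probabilistic insight via the population decomposition; your argument is shorter, analytic, and makes transparent how the Campbell-formula exponential and the generating-function derivative are two faces of the same object — but it does depend on having Lemma~\ref{Lspinenot} already in hand.
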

\begin{proof}
Using the same notation as in the previous Lemma, we observe that under  $G_t$, we may decompose the population $Z_T$ as 
\begin{align*}
Z_T = Z_T' + Z_T'',
\end{align*}
where $Z_T'$ are the descendents of the unique particle the $k$ spines are following at time $t$, and $Z_T''$ counts the rest of the particles at time $T$. Observe that $Z_T'' = Z_T-Z_T'$ and that, conditionally on $G_t$, $Z_T'$ and $Z_T''$ are  independent. Hence
\[
\begin{split}
&\mathbb{E}^{(k)}\left[\left(\prod_{i=1}^k\prod_{v\prec\xi_T^{(i)}}L_v \right)e^{-\theta Z_{T}}\mathbf{1}_{\{ G_t; \,\, A_{k,T}\}}\right]\\
 &= \mathbb{E}^{(k)}\left[\mathbb{E}^{(k)}\left[\left(\prod_{i=1}^k\prod_{v\prec\xi_T^{(i)}}L_v \right)e^{-\theta Z_{T}}\mathbf{1}_{\{ G_t; \,\, A_{k,T}\}} \Bigg| \mathcal{F}^{(k)}_t \right] \right] \\
&= \mathbb{E}^{(k)}\left[\left(\prod_{i=1}^k\prod_{ v\prec\xi_t^{(i)}}L_v \right) \mathbb{E}^{(k)}\left[\left(\prod_{i=1}^k\prod_{\xi_t^{(i)} \preceq v\prec\xi_T^{(i)}}L_v \right)e^{-\theta Z'_{T}}\mathbf{1}_{\{ A_{k,T}\}} \Bigg| \mathcal{F}^{(k)}_t \right] \mathbb{E}[ e^{ - \theta Z_T'' } | \mathcal{F}_t] \mathbf{1}_{\{G_t \}} \right].
\end{split}
\]
On the one hand, we have $\mathbb{E}[ e^{ - \theta Z_T'' } | \mathcal{F}_t]  = F_{T-t}(e^{-\theta})^{Z_t -1}.$
On the other, we have
\[
\mathbb{E}^{(k)}\left[\left(\prod_{i=1}^k\prod_{\xi_t^{(i)} \preceq v\prec\xi_T^{(i)}}L_v \right)e^{-\theta Z'_{T}}\mathbf{1}_{\{ A_{k,T}\}} \Bigg| \mathcal{F}^{(k)}_t \right] = \mathbb{E}^{(k)} [ Z_{T-t}^{(k)} e^{ - \theta Z_{T-t}} ],
\]
independently of $\mathcal{F}^{(k)}_t$ given $G_t$. 
Putting all pieces together, we obtain 
\begin{align} \label{eq:timet2}
\mathbb{E}^{(k)}&\left[\left(\prod_{i=1}^k\prod_{v\prec\xi_T^{(i)}}L_v \right)e^{-\theta Z_{T}}\mathbf{1}_{\{ G_t; \,\, A_{k,T}\}}\right] \nonumber\\
&= \mathbb{E}^{(k)} [ Z_{T-t}^{(k)} e^{ - \theta Z_{T-t}} ] \mathbb{E}^{(k)}\left[\left(\prod_{i=1}^k\prod_{ v\prec\xi_t^{(i)}}L_v \right) F_{T-t}(e^{-\theta})^{Z_t -1} \mathbf{1}_{\{G_t \}} \right].
\end{align}
Finally, by summing over the possible elements of the time $t$ population we deduce
\begin{align} \label{eq:Y}
\mathbb{E} \left[ \left(\prod_{i=1}^k\prod_{ v\prec\xi_t^{(i)}}L_v \right) \mathbf{1}_{\{G_t \}}  \Bigg| \mathcal{F}^0_t \right] = Z_t.
\end{align}
Taking $\mathcal{F}_t$-conditional expectations in \eqref{eq:timet2} and using \eqref{eq:Y}, we deduce
\begin{equation} \label{eq:timet3}
\begin{split}
\mathbb{E}^{(k)}\left[\left(\prod_{i=1}^k\prod_{v\prec\xi_T^{(i)}}L_v \right)e^{-\theta Z_{T}}\mathbf{1}_{\{ G_t; \,\, A_{k,T}\}}\right]
&= \mathbb{E}^{(k)} [ Z_{T-t}^{(k)} e^{ - \theta Z_{T-t}} ] \mathbb{E}^{(k)}\left[ Z_t F_{T-t}(e^{-\theta})^{Z_t -1} \right]\\ &= F_{T-t}^{(k)}( e^{ - \theta}) F_t'(F_{T-t}(\theta)).
\end{split}
\end{equation}
Let us note that from the semigroup identity $F_T(s) = F_t(F_{T-t}(s))$ we have
\begin{align} \label{eq:mahler}
F_t'(F_{T-t}(s)) = F_T'(s)/F_{T-t}'(s).
\end{align}
Substituting \eqref{eq:timet3} into \eqref{eq:timet2} and using \eqref{eq:mahler} allows us to deduce \eqref{Spinenotalt}.
\end{proof}

Noting that $\mathbb{E}^{(k)}[Z^{(k)}_{t}e^{-\theta Z_{t}}] = F^{(k)}_t(e^{-\theta})$, the agreement of the two representations of $\mathbb{Q}_{\theta,T}^{(k)}(\psi_1 > t)$ in Lemmas \ref{Lspinenot} and \ref{Lspinenotbis} amounts to the identity
\begin{align*}
\exp\left\{-\int_0^t r\left(1-\mathbb{E}\left[L\left(\mathbb{E}[e^{-\theta Z_{T-s}}]\right)^{L-1}\right]\right)\ud s\right\}  =  F_t^{\prime}(F_{T-t}(e^{-\theta})),
\end{align*}
the proof of which is a standard calculation in the theory of continuous-time Galton-Watson processes; see e.g.\ Lemma 2.5 of \cite{CJ}.

We continue our exposition by computing the rate of births off the spine.
\begin{lemma}\label{lem:bots} 
Let $\sigma_1$ represent the first time there is a birth off the spine event, that is, a birth event where all the spines stay together.
The rate of births occurring off the $k$ spines is given by
\begin{equation}\label{firstbirthos}
\begin{split}
\mathbb{Q}^{(k)}_{\theta, T}&(\sigma_1\in \ud t; L_{\sigma_1}=\ell |\tau_1>t)=\ell\left(\mathbb{E}[e^{-\theta Z_{T-t}}]\right)^{\ell-1}p_\ell  r e^{-rt} \frac{F_{T-t}'(e^{-\theta})}{F_T'(e^{-\theta})}.\\
\end{split}
\end{equation}
\end{lemma}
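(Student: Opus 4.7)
My plan is to derive the formula by a simple conditional probability computation that combines the preceding two lemmas; essentially, the event that we first meet a birth-off-the-spine at time $t$ on $\{\tau_1>t\}$ coincides with the event studied in Lemma \ref{firstbe}, so one can divide by the quantity computed in Lemma \ref{Lspinenotbis}.

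First I would establish the set identity
\[
\{\sigma_1\in\ud t,\ L_{\sigma_1}=\ell,\ \tau_1>t\}\;=\;\{\chi_1\in\ud t,\ L_{\chi_1}=\ell,\ B_{\chi_1}\}.
\]
This is the one step that requires a little care. The $\supseteq$ direction follows because, on the right-hand event, the first population event occurs at $t$ and is a birth along the spine where all spines stay together; this is exactly the first off-spine birth ($\sigma_1=t$), and there is no spine split at or before $t$. For $\subseteq$, note that the process starts from the single particle $\emptyset$, which carries all $k$ spines, so the first population event $\chi_1$ is necessarily a spine event. On $\{\tau_1>t,\ \sigma_1=t\}$ no spine event of either type (split or off-spine birth) has occurred before $t$, hence no population event has occurred before $t$ at all, and $\chi_1=t$ with the birth-off-spine condition $B_{\chi_1}$.

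Second, Lemma \ref{firstbe} gives the joint law
\[
\mathbb{Q}^{(k)}_{\theta, T}\bigl(\chi_1\in\ud t,\ L_{\chi_1}=\ell,\ B_{\chi_1}\bigr)
=\ell\bigl(\mathbb{E}[e^{-\theta Z_{T-t}}]\bigr)^{\ell-1}p_\ell\,r e^{-rt}\,\frac{\mathbb{E}^{(k)}[Z^{(k)}_{T-t}e^{-\theta Z_{T-t}}]}{\mathbb{E}^{(k)}[Z^{(k)}_{T}e^{-\theta Z_{T}}]}\,\ud t,
\]
and the elementary moment identity $\mathbb{E}[Z^{(k)}_{s}e^{-\theta Z_s}]=e^{-k\theta}F^{(k)}_s(e^{-\theta})$ (obtained by differentiating $F_s(u)=\mathbb{E}[u^{Z_s}]$ $k$ times and evaluating at $u=e^{-\theta}$) lets me rewrite the right-hand ratio as $F^{(k)}_{T-t}(e^{-\theta})/F^{(k)}_{T}(e^{-\theta})$, the factors of $e^{-k\theta}$ cancelling.

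Third, I would divide by $\mathbb{Q}^{(k)}_{\theta,T}(\tau_1>t)$ using the representation from Lemma \ref{Lspinenotbis}, namely $F^{(k)}_{T-t}(e^{-\theta})F'_T(e^{-\theta})/\bigl(F^{(k)}_T(e^{-\theta})F'_{T-t}(e^{-\theta})\bigr)$. The $F^{(k)}_{T-t}/F^{(k)}_{T}$ factor cancels and one is left precisely with $F'_{T-t}(e^{-\theta})/F'_{T}(e^{-\theta})$, yielding the claimed identity.

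The main (minor) obstacle is the event identification in step one; everything else is a direct ratio. An alternative, slightly less economical, route would bypass Lemma \ref{Lspinenotbis} and use the representation \eqref{Spinenot} from Lemma \ref{Lspinenot} in the denominator, in which case the equivalence with $F'_{T-t}(e^{-\theta})/F'_T(e^{-\theta})$ is exactly the standard Galton-Watson identity $F'_t(F_{T-t}(e^{-\theta}))=F'_T(e^{-\theta})/F'_{T-t}(e^{-\theta})$ noted immediately after the proof of Lemma \ref{Lspinenotbis}.
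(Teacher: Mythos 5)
Your proposal is correct and follows essentially the same route as the paper: identify the events $\{\sigma_1\in\ud t,\ L_{\sigma_1}=\ell,\ \tau_1>t\}$ and $\{\chi_1\in\ud t,\ L_{\chi_1}=\ell,\ B_{\chi_1}\}$, read off the numerator from Lemma~\ref{firstbe}, divide by $\mathbb{Q}^{(k)}_{\theta,T}(\tau_1>t)$ from Lemma~\ref{Lspinenotbis}, and cancel. Your more careful tracking of the $e^{-k\theta}$ factor in $\mathbb{E}[Z^{(k)}_s e^{-\theta Z_s}]=e^{-k\theta}F^{(k)}_s(e^{-\theta})$ is a slight tightening over the paper's unadorned $\mathbb{E}[Z^{(k)}_s e^{-\theta Z_s}]=F^{(k)}_s(e^{-\theta})$, but both are harmless since only ratios appear.
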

\begin{proof}
In order to deduce this identity, we will use Lemmas \ref{firstbe} and \ref{Lspinenotbis}. Observe that under the event $\{\tau_1>t\}$, the spines stay together by time $t$ and that, by time  $t$, the first birth event  and the first birth off spines are   the same.  More precisely, we have
\[
\begin{split}
\mathbb{Q}^{(k)}_{\theta, T}(\sigma_1\in \ud t; &L_{\sigma_1}=\ell |\tau_1>t)=\frac{\mathbb{Q}^{(k)}_{\theta, T}(\sigma_1\in \ud t; L_{\sigma_1}=\ell , B_{\sigma_1}, \tau_1>t)}{\mathbb{Q}^{(k)}_{\theta, T}(\tau_1>t)}\\
&=\frac{\mathbb{Q}^{(k)}_{\theta, T}(\chi_1\in \ud t; L_{\chi_1}=\ell , B_{\chi_1})}{\mathbb{Q}^{(k)}_{\theta, T}(\tau_1>t)}\\
&=\frac{\ell\left(\mathbb{E}[e^{-\theta Z_{T-t}}]\right)^{\ell-1}p_\ell  re^{-rt}\frac{\mathbb{E}[Z^{{(k)}}_{T-t}e^{-\theta Z_{T-t}}]}{\mathbb{E}[Z^{{(k)}}_{T}e^{-\theta Z_{T}}]}{\rm d} t}{\frac{F_{T-t}^{(k)}(e^{-\theta})}{F_T^{(k)}(e^{-\theta})}  \frac{F_T'(e^{-\theta})}{F_{T-t}'(e^{-\theta})} }\\
&=\ell\left(\mathbb{E}[e^{-\theta Z_{T-t}}]\right)^{\ell-1}p_\ell  r e^{-rt} \frac{F_{T-t}'(e^{-\theta})}{F_T'(e^{-\theta})},
\end{split}
\]
as required.
\end{proof}
Let us make a brief remark interpreting \eqref{firstbirthos}. Note that we may alternatively write 
\begin{equation}\label{firstbirthos2}
\begin{split}
\mathbb{Q}^{(k)}_{\theta, T}&(\sigma_1\in \ud t; L_{\sigma_1}=\ell |\tau_1>t)=\frac{\ell (\mathbb{E}[e^{-\theta Z_{T-t}}])^{\ell-1}p_\ell}{\mathbb{E}\left[L\left(\mathbb{E}[e^{-\theta Z_{T-t}}]\right)^{L-1}\right]} \times \mathbb{E}\left[L\left(\mathbb{E}[e^{-\theta Z_{T-t}}]\right)^{L-1}\right] r e^{-rt} \frac{F_{T-t}'(e^{-\theta})}{F_T'(e^{-\theta})}\ud t.
\end{split}
\end{equation}
As mentioned in the opening of this section, the first term 
\[
\frac{\ell (\mathbb{E}[e^{-\theta Z_{T-t}}])^{\ell-1}p_\ell}{\mathbb{E}\left[L\left(\mathbb{E}[e^{-\theta Z_{T-t}}]\right)^{L-1}\right]}
\]
 in the above identity is a probability mass function in the variable $\ell$. The latter term represents the total rate of births off the spine. In other words, the rate of births off a spine branch and number of births off a spine branch are size biased and discounted by time to go, and are independent of the number of spines following that branch $k$.

Now, we compute the probability of the event that spines split at time $t$ with $\ell$ offspring.
Suppose $k$ spines are split into $g\in \{2, \ldots, k\}$ groups of sizes $k_1, k_2, \ldots k_g\ge 1$. Let $h_i$ be the number of groups of size $i$. We note that
\[
\sum_{i=1}^g k_i=k\qquad \textrm{and}\qquad \sum_{j=1}^kjh_j=k.
\]
\begin{lemma}\label{Splitsgspines}
Let $C_{g; k_1, \ldots, k_g}$ be the event that at the first splitting event, the spines split into $g$ groups of sizes $k_1, \ldots, k_g$. Then
\[
\begin{split}
\mathbb{Q}^{(k)}_{\theta, T}&\Big(\tau_1 \in \ud t; C_{g; k_1, \ldots, k_g}; L_{\tau_1}=\ell\Big)\\
&\hspace{2cm}=\ell^{(g)}\left(\mathbb{E}[e^{-\theta Z_{T-t}}]\right)^{\ell-g}p_\ell \frac{k!}{\prod_{j=1}^k h_j!\prod_{i=1}^g k_i!}\frac{\prod_{i=1}^g \mathbb{E}[Z^{(k_i)}_{T-t}e^{-\theta Z_{T-t}}]}{\mathbb{E}[Z^{(k)}_{T}e^{-\theta Z_{T}}]}\frac{F_T'(e^{-\theta})}{F_{T-t}'(e^{-\theta})}  r\ud t.
\end{split}
\]
\end{lemma}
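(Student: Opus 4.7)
The plan is to mimic the infinitesimal version of the proof of Lemma \ref{Lspinenotbis}. Fix small $\epsilon>0$ and consider the event $\mathcal{E}^\epsilon=\{\tau_1\in(t,t+\epsilon),\ L_{\tau_1}=\ell,\ C_{g;k_1,\ldots,k_g}\}$. On this event the $k$ spines share a unique time-$t$ ancestor $u^*$ (i.e.\ $G_t=\{\tau_1>t\}$ holds), which then branches inside the window $(t,t+\epsilon)$ with $\ell$ offspring, and $g$ of them carry spine groups with the prescribed multiset of sizes $\{k_1,\ldots,k_g\}$. The strategy is to compute
\[
\mathbb{E}^{(k)}\left[\left(\prod_{i=1}^k\prod_{v\prec \xi^{(i)}_T}L_v\right)e^{-\theta Z_T}\mathbf{1}_{A_{k,T}\cap \mathcal{E}^\epsilon}\right],
\]
divide by $\epsilon$, let $\epsilon\downarrow 0$, and normalise by $\mathbb{E}^{(k)}[Z_T^{(k)}e^{-\theta Z_T}]$ to produce the claimed density.

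On $\mathcal{E}^\epsilon$ the spine product factorises as $\bigl(\prod_{v\prec u^*}L_v\bigr)^k\cdot L_{u^*}^k\cdot \prod_{p=1}^g\mathcal{P}_p$, where $\mathcal{P}_p$ is the $k_p$-spine product inside the subtree rooted at the $p$-th spine-carrying child of $u^*$. The population $Z_T$ decomposes as an independent sum of contributions from (i) time-$t$ particles other than $u^*$, each distributed like an independent copy of $Z_{T-t}$, and (ii) the $\ell$ offspring subtrees of $u^*$, also independent copies of $Z_{T-t}$. By the branching Markov property and conditioning on the branching data at $u^*$, identity \eqref{expnprob} applied in each spine-carrying subtree yields a factor $\mathbb{E}^{(k_p)}[Z_{T-t}^{(k_p)}e^{-\theta Z_{T-t}}]$; each of the $\ell-g$ non-spine offspring contributes $\mathbb{E}[e^{-\theta Z_{T-t}}]$; and the non-spine-ancestor siblings of $u^*$ at times $<t$ collectively contribute $F_{T-t}(e^{-\theta})^{Z_t-1}$.

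Next handle the combinatorics and the pre-$t$ factor. The probability that $u^*$ branches in $(t,t+\epsilon)$ with $\ell$ offspring is $r\epsilon p_\ell+o(\epsilon)$. Under $\mathbb{P}^{(k)}$, given $\ell$ offspring, each of the $k$ spines picks one uniformly and independently, so a particular spine-to-offspring function has probability $\ell^{-k}$; the number of such functions whose preimage-size multiset equals $\{k_1,\ldots,k_g\}$ is $\binom{\ell}{g}\cdot\frac{g!}{\prod_j h_j!}\cdot\frac{k!}{\prod_i k_i!}=\frac{\ell^{(g)}k!}{\prod_j h_j!\prod_i k_i!}$, corresponding to the choice of $g$ spine-carrying offspring, the assignment of the multiset of sizes to them, and the multinomial distribution of labelled spines into the groups. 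The factor $L_{u^*}^k=\ell^k$ in the spine product exactly cancels the $\ell^{-k}$ per-configuration probability. Finally, repeating the argument in Lemma \ref{Lspinenotbis}, conditioning on $\mathcal{F}_t$ and summing the identity $\prod_{v\prec u}L_v^k\cdot\prod_{v\prec u}L_v^{-k}=1$ over candidate ancestors $u\in\mathcal{Z}_t$ gives $\mathbb{E}^{(k)}\bigl[(\prod_{v\prec u^*}L_v)^k\mathbf{1}_{G_t}\mid \mathcal{F}_t\bigr]=Z_t$, so the pre-$t$ factor reduces to $\mathbb{E}[Z_t F_{T-t}(e^{-\theta})^{Z_t-1}]=F_t'(F_{T-t}(e^{-\theta}))=F_T'(e^{-\theta})/F_{T-t}'(e^{-\theta})$ via the semigroup identity $F_T=F_t\circ F_{T-t}$. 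Assembling all factors and dividing by the normaliser $\mathbb{E}^{(k)}[Z_T^{(k)}e^{-\theta Z_T}]$ yields the stated formula. The main obstacle is the combinatorial bookkeeping: verifying that the count $\ell^{(g)}k!/(\prod_j h_j!\prod_i k_i!)$ of spine configurations emerges correctly after the $\ell^k$ cancellation, and that the offspring subtrees decouple exactly in the form required by \eqref{expnprob}.
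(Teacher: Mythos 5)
Your proof is correct and follows the same overall skeleton as the paper's: factor the spine product into the pre-$t$ common part $\bigl(\prod_{v\prec u^*}L_v\bigr)^k$, the branching factor $L_{u^*}^k$, and the post-$t$ products; split $Z_T$ into independent subpopulations; apply \eqref{expnprob} in each spine-carrying subtree to extract the $\mathbb{E}[Z_{T-t}^{(k_i)}e^{-\theta Z_{T-t}}]$ factors; and count the spine-to-offspring configurations exactly as the paper does, with the $\ell^k$ cancellation giving $\ell^{(g)}$. The one place you genuinely diverge is the treatment of the pre-$t$ factor. The paper decomposes the contribution of $[0,t]$ by enumerating the births off the spine at their birth times $\sigma_u$, isolating $\mathbb{E}^{(k)}\bigl[\prod_{u\preceq\xi_t^{(1)}} L_u\,e^{-\theta\sum_{j} Z_{T-\sigma_u}^{u,j}}\bigr]$ and evaluating it via Campbell's formula \eqref{Campbell} to obtain $\exp\{-\int_0^t r(1-\mathbb{E}[L(\mathbb{E}[e^{-\theta Z_{T-s}}])^{L-1}])\ud s\}$, then implicitly relying on the identity equating this exponential with $F_t'(F_{T-t}(e^{-\theta}))$. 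You instead condition on $\mathcal{F}_t$, sum the identity $\prod_{v\prec u}L_v^{k}\cdot\prod_{v\prec u}L_v^{-k}=1$ over candidate ancestors $u\in\mathcal{Z}_t$ to get $\mathbb{E}^{(k)}\bigl[(\prod_{v\prec u^*}L_v)^k\mathbf{1}_{G_t}\mid\mathcal{F}_t\bigr]=Z_t$, and then compute $\mathbb{E}[Z_t F_{T-t}(e^{-\theta})^{Z_t-1}]=F_t'(F_{T-t}(e^{-\theta}))=F_T'(e^{-\theta})/F_{T-t}'(e^{-\theta})$ directly — which is precisely the device the paper uses for the alternative proof in Lemma \ref{Lspinenotbis}. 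Both are valid; yours arrives at the closed-form $F_T'/F_{T-t}'$ without passing through Campbell's formula and the exponential-to-derivative identity, which is arguably cleaner and makes the match to the stated formula more immediate.
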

Observe that the term $\ell^{(g)}\left(\mathbb{E}[e^{-\theta Z_{T-t}}]\right)^{\ell-g}p_\ell$  represents the $g$-size biased discounted offspring.
\begin{proof} Recall that  $A_{k_j,T}$ represents the event that $k_j$ spines are separated and alive by time $T$, then
\[
\begin{split}
\mathbb{Q}^{(k)}_{\theta, T}&\Big(\tau_1 \in \ud t; C_{g; k_1, \ldots, k_g}; L_{\tau_1}=\ell\Big)\\
&=\frac{\mathbb{E}^{(k)}\left[\left(\displaystyle\prod_{i=1}^k\prod_{v\prec \xi^{(i)}_T}L_v \right) e^{-\theta Z_{T}};\tau_1 \in \ud t; G_t; C_{g; k_1, \ldots, k_g}; L_{\tau_1}=\ell; A_{k,T}\right]}{\mathbb{E}[Z^{(k)}_{T}e^{-\theta Z_{T}}]}\\
&=\frac{1}{\mathbb{E}[Z^{(k)}_{T}e^{-\theta Z_{T}}]} \mathbb{E}^{(k)}\left[\displaystyle\prod_{u\preceq \xi^{(1)}_t} \left(\frac{1}{L_u}\right)^{k-1}\binom{\ell}{g}\frac{g!}{\prod_{i=1}^{k-1} h_i!}\frac{k!}{\prod_{j=1}^{g} k_j!}\left(\frac{1}{\ell}\right)^k\mathbf{1}_{\{\tau_1 \in \ud t;L_{\tau_1}=\ell\}} \right.\\
&\hspace{3cm}\times\displaystyle\prod_{i=1}^{\ell-g}e^{-\theta Z^{(i)}_{T-t}}\prod_{u\preceq \xi^{(1)}_t}e^{-\theta\sum_{m=1}^{L_u-1} Z^{u,m}_{T-\sigma_u}}\left(\prod_{j=1}^g \mathbf{1}_{A_{k_j,T}} \prod_{i=1}^{k_j}\prod_{\xi^{(j)}_t\prec v\prec \xi^{(j,i)}_T}L_v \right)\\
&\hspace{10cm}\left. \times\prod_{u\preceq \xi^{(1)}_t}L_u^k \left(\prod_{n=1}^g e^{-\theta Z^{(n)}_{T-t}}\right)\right],
\end{split}
\]
where $\xi^{(j,i)}_T$ denotes the $i$-th spine at time $T$ whose ancestor is $\xi^{(j)}_t$ at time $t$ and, similarly in the proof of Lemma \ref{Lspinenot}, $\sigma_u$ denotes the time of  the birth off spine of particle $u$
 and  $Z^{u,j}_{T-\sigma_u}$ represents the contribution of the $j$-child of the particle $u$ to the population alive at time $T$.  Moreover,  $Z^{(i)}_{T-t}$ denotes the contribution of the $i$-th offspring    at first spine splitting to the population alive at time $T$. In other words,   $Z_T$ can be rewritten as follows
 \[
 Z_T=\sum_{u\preceq\xi_t^{(1)}}\sum_{j=1}^{L_u-1} Z_{T-\sigma_u}^{u,j} + \sum_{i=1}^{\ell}Z^{(i)}_{T-t}.
 \]
 It is important to note that for each node $u$, the random variables $(Z^{u,j}_{T-\sigma_u}, j\ge 1)$ are i.i.d., conditionally on $\sigma_u$, and that for $u\prec v\preceq \xi^{(1)}_t$ the families $(Z^{u,j}_{T-\sigma_u}, j\ge 1)$ and $(Z^{v,j}_{T-\sigma_v}, j\ge 1)$ are independent, conditionally on $(\sigma_u, \sigma_v)$.

Before we continue with the simplification of the terms inside the expectation, let us provide some explanations about each term. The first term, inside the expectation, in the second identity represents that all spines stay together at births before $t$. The second term inside the expectation (combinatorics) can be interpreted as follows: the  term $\binom{\ell}{g}$ represents the way we choose offspring which get spine groups, the next term is the way we choose which group gets which group size and the  third term is the number of ways dividing $k$ spines into groups with sizes $k_1,k_2,\ldots, k_g$. All three terms together represents the number of ways to get $g$ groups of sizes $k_1,k_2,\ldots, k_g$ with $\ell$ offsprings from $k$ spines. Finally the fourth term is the probability of particular spine choice when $\ell$ offspring.  The first term in the second row (just after the product sign) represents the contribution at time $T$ of the non-spine individuals. The next term are the contributions  to $Z_T$ from births off spines before time $t$. The three terms inside the brackets  provides the contributions from  $g$ groups with $k_1, \ldots, k_g$ spines after time $t$.  The last two terms are nothing but  $k$ spines follow the same path before time $t$ and the contribution  to $Z_T$ of the spines. 

Therefore, from the Markov branching property we deduce 
\[
\begin{split}
\mathbb{Q}^{(k)}_{\theta, T}&\Big(\tau_1 \in \ud t; C_{g; k_1, \ldots, k_g}; L_{\tau_1}=\ell\Big)=r\ud t\mathbb{E}\left[\prod_{u\prec \xi^{(1)}_{t}} L_ue^{-\theta\sum_{j=1}^{L_u-1} Z^{u,j}_{T-\sigma_u}}\right]\\
&\hspace{2cm}\times\frac{\ell!}{(\ell -g)!}\left(\mathbb{E}\left[e^{-\theta Z_{T-t}}\right]\right)^{\ell -g}p_\ell\frac{k!}{\prod_{i=1}^{k-1} h_i!\prod_{j=1}^{g} k_j!}\frac{\prod_{i=1}^g\mathbb{E}\Big[Z^{(k_i)}_{T-t}e^{-\theta Z_{T-t}}\Big]}{\mathbb{E}\Big[Z^{(k)}_{T}e^{-\theta Z_{T}}\Big]},
\end{split}
\]
where the desired identity follows after applying Campbell's formula, see \eqref{Campbell}.
\end{proof}
Combining Lemmas \ref{Lspinenot} and \ref{Splitsgspines}, we can also obtain the rate of $k$-spines splitting, that is,
\begin{equation}\label{spinesplit}
\begin{split}
\mathbb{Q}^{(k)}_{\theta, T}&\Big(\tau_1 \in \ud t; C_{g; k_1, \ldots, k_g}; L_{\tau_1}=\ell\Big| \tau_1>t\Big)=\frac{\ell^{(g)}\left(\mathbb{E}[e^{-\theta Z_{T-t}}]\right)^{\ell-g}}{\mathbb{E}\Big[L^{(g)}\left(\mathbb{E}[e^{-\theta Z_{T-t}}]\right)^{L-g}\Big]}p_\ell \\
&\hspace{3cm}\times\frac{k!}{\prod_{j=1}^k h_j!\prod_{i=1}^g k_i!}\frac{\prod_{i=1}^g \mathbb{E}[Z^{(k_i)}_{T-t}e^{-\theta Z_{T-t}}]}{\mathbb{E}[Z^{(k)}_{T-t}e^{-\theta Z_{T-t}}]} r\mathbb{E}\Big[L^{(g)}\left(\mathbb{E}[e^{-\theta Z_{T-t}}]\right)^{L-g}\Big]\ud t.
\end{split}
\end{equation}
It is important to note that  the  first term in the right hand side of \eqref{spinesplit}  represents the $g$-size biased and discounted offspring when split into $g$-groups and the remaining terms represent the rate of $k$-spines splitting into $g$-groups of sizes $k_1,\ldots,k_g$.

Importantly, there is another way of reinterpreting the above result  in terms of partitions. Recall that we write $(\mathcal{T}_0,\mathcal{T}_1,\ldots,\mathcal{T}_m)$ for the topology of the spine process - i.e. $\mathcal{T}_i$ is the spine partition after the $i^{\text{th}}$ split time. Hence  the joint law of the first spine split time $\tau_1$, the offspring size $L_{\tau_1}$ at the split, as well as $\mathcal{T}_1$, the partition describing the grouping of the spines after this first split satisfies.
\begin{lemma} \label{lem:firstsplit}
Let $\beta_1$ be a partition of $\{1,\ldots,k\}$ into $g$ blocks of sizes $k_1,\ldots,k_g$. Then
\begin{align} \label{eq:firstsplit}
\mathbb{Q}^{(k)}_{\theta, T}\Big(\tau_1 \in \ud t; \mathcal{T}_1 = \beta_1 ; L_{\tau_1}=\ell\Big) =\ell^{(g)}\left(\mathbb{E}[e^{-\theta Z_{T-t}}]\right)^{\ell-g}p_\ell \frac{\prod_{i=1}^g \mathbb{E}[Z^{(k_i)}_{T-t}e^{-\theta Z_{T-t}}]}{\mathbb{E}[Z^{{(k)}}_{T}e^{-\theta Z_{T}}]}\frac{F_T'(e^{-\theta})}{F_{T-t}'(e^{-\theta})}   r \ud t.
\end{align}
\end{lemma}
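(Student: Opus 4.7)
The plan is to derive Lemma \ref{lem:firstsplit} directly from Lemma \ref{Splitsgspines} via a symmetry argument. The essential observation is that under $\mathbb{Q}^{(k)}_{\theta,T}$ the $k$ spine labels are exchangeable. This is because the Radon-Nikodym density in \eqref{newprob} depends on the spines only through the symmetric product $\prod_{i=1}^{k}\prod_{v\prec\xi^{(i)}_T}L_v$ and the symmetric event $A_{k,T}$, while under the reference measure $\mathbb{P}^{(k)}$ the spines are constructed by independent uniform selections among offspring at each branching event (cf.\ \eqref{mea-k-spine}). Both ingredients are invariant under the action of the symmetric group $S_k$ on the spine labels, so the joint law of $(\xi^{(1)},\ldots,\xi^{(k)})$ under $\mathbb{Q}^{(k)}_{\theta,T}$ is exchangeable, and hence so is the induced partition-valued process.

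Given exchangeability, I would argue that conditional on the event $C_{g;k_1,\ldots,k_g}$ that the first spine split produces $g$ groups whose unordered sizes form the multiset $\{k_1,\ldots,k_g\}$, the random partition $\mathcal{T}_1$ is uniformly distributed over the set of all set partitions of $\{1,\ldots,k\}$ with block sizes forming that multiset. A standard counting identity then says that the number of such partitions is
\[
N(k_1,\ldots,k_g) \;=\; \frac{k!}{\prod_{j=1}^{k} h_j!\,\prod_{i=1}^{g} k_i!},
\]
where $h_j := \#\{i : k_i = j\}$. Consequently, for any fixed partition $\beta_1$ having block sizes $k_1,\ldots,k_g$,
\[
\mathbb{Q}^{(k)}_{\theta,T}\!\left(\tau_1 \in \ud t,\, \mathcal{T}_1 = \beta_1,\, L_{\tau_1} = \ell\right) \;=\; \frac{1}{N(k_1,\ldots,k_g)}\; \mathbb{Q}^{(k)}_{\theta,T}\!\left(\tau_1 \in \ud t,\, C_{g;k_1,\ldots,k_g},\, L_{\tau_1} = \ell\right).
\]

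Substituting the explicit expression supplied by Lemma \ref{Splitsgspines} into the right-hand side, the combinatorial factor $k!/\bigl(\prod_j h_j!\,\prod_i k_i!\bigr)$ cancels against $N(k_1,\ldots,k_g)$ exactly, and what remains is precisely \eqref{eq:firstsplit}. The main place I would want to be careful is the exchangeability step: although intuitively obvious, it requires checking that the construction in Definition \ref{Pkdescription} together with the factor in \eqref{newprob} is invariant under relabelling, and that the combinatorial counting reproducing $N(k_1,\ldots,k_g)$ agrees with the one already appearing implicitly in Lemma \ref{Splitsgspines}. Once those two bookkeeping points are confirmed, the result follows with no further computation.
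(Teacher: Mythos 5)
Your proposal matches the paper's own proof exactly: both argue that the spines are exchangeable under $\mathbb{Q}^{(k)}_{\theta,T}$, deduce that conditionally on the split-size multiset $\{k_1,\ldots,k_g\}$ the resulting partition $\mathcal{T}_1$ is uniform over the $k!/(\prod_j h_j!\,\prod_i k_i!)$ set partitions with those block sizes, and then divide the formula of Lemma \ref{Splitsgspines} by that count. Your additional remarks justifying exchangeability (symmetry of the Radon--Nikodym factor in \eqref{newprob} together with the exchangeable construction of \eqref{mea-k-spine}) are correct and merely make explicit what the paper leaves implicit.
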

\begin{proof}
Let $k_1 \geq \ldots \geq k_g$ be integers with $k_1 + \ldots + k_g = k$. Let $h_j := \# \{ i : k_i = j \}$. Then there are
\begin{align*}
\frac{k!}{ k_1! \ldots k_g! h_1! \ldots h_k! } 
\end{align*}
different set partitions of $\{1,\ldots,k\}$ such that the block sizes, listed in decreasing order, are given by $k_1,k_2,\ldots,k_g$. Since the spines are exchangeable, it follows that given the event in Lemma \ref{Splitsgspines}, it is equally likely that any of these partitions occur. The result therefore follows by dividing through the formula in Lemma \ref{Splitsgspines} by the combinatorial factor ${k!}/{(k_1! \ldots k_g! h_1! \ldots h_k! )} $.
\end{proof}

We are now ready to state the main result of this section, providing an explicit description of the joint law of the spine ancestry process $(\pi_t^k)_{t \in [0,T]}$ (through its split times $\tau_1,\ldots,\tau_m$ and topology $(\beta_0,\ldots,\beta_m)$) and the birth sizes $L_{\tau_1},\ldots,L_{\tau_m}$ at these split times.

\begin{lemma} \label{lem:Qmain}
Let $0  < t_1 < \ldots < t_m < T$. Let $(\beta_0,\ldots,\beta_m)$ be a splitting process of $\{1,\ldots,k\}$ and let $g_1,\ldots,g_m$ be the associated split sizes. Let $\ell_1,\ldots,\ell_m$ be integers with $\ell_i \geq g_i$. Then 
\begin{align}
& \mathbb{Q}^{(k)}_{\theta, T}\Big(\tau_1 \in \ud t_1,\ldots,\tau_m \in \ud t_m, \mathcal{T}(\xi) = (\beta_0,\ldots,\beta_m), L_{\tau_1} = \ell_1,\ldots, L_{\tau_m} = \ell_m \Big) \nonumber \\
& = \frac{F_T'(e^{-\theta}) }{\mathbb{E}[Z_T^{(k)}e^{ - \theta Z_T }]}  \prod_{ i = 1}^m \ell_i^{(g_i)}\left(\mathbb{E}[e^{-\theta Z_{T-t_i}}]\right)^{\ell_i-g_i}p_{\ell_i} F_{T-t_i}'(e^{-\theta})^{g_i-1}\ud t_i.
\end{align}
\end{lemma}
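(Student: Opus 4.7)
The plan is to proceed by induction on $m$, using Lemma \ref{lem:firstsplit} to peel off the first split and the branching Markov property (Lemma \ref{branlem}) to handle the independent evolution of the sub-trees that emerge from that first split.

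For the base case $m=1$, the only admissible splitting sequence $(\beta_0,\beta_1)$ ends at the partition of $\{1,\ldots,k\}$ into singletons, forcing $g_1=k$ and $k_j^{(1)}=1$ for all $j$. Since $\mathbb{E}[Z_{T-t_1}^{(1)}e^{-\theta Z_{T-t_1}}]$ is (in the paper's convention) $F_{T-t_1}'(e^{-\theta})$, the product $\prod_{j=1}^{g_1}\mathbb{E}[Z_{T-t_1}^{(k_j^{(1)})}e^{-\theta Z_{T-t_1}}]$ appearing in \eqref{eq:firstsplit} becomes $F_{T-t_1}'(e^{-\theta})^{g_1}$, which together with the $1/F_{T-t_1}'(e^{-\theta})$ already in \eqref{eq:firstsplit} collapses to the factor $F_{T-t_1}'(e^{-\theta})^{g_1-1}$ of the claimed formula.

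For the inductive step, condition on the first split via Lemma \ref{lem:firstsplit}. On the event $\{\tau_1\in\ud t_1,\,\mathcal{T}_1=\beta_1,\,L_{\tau_1}=\ell_1\}$ the time-$t_1$ population contains $g_1$ distinguished spine-carrying offspring particles whose spine marks form the blocks $B_1^{(1)},\ldots,B_{g_1}^{(1)}$ of $\beta_1$, of sizes $k_1^{(1)},\ldots,k_{g_1}^{(1)}$. By the branching Markov property (Lemma \ref{branlem}), the sub-trees rooted at these $g_1$ offspring then evolve independently, with the $j$-th sub-tree distributed as under $\mathbb{Q}^{(k_j^{(1)})}_{\theta,T-t_1}$. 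A splitting sequence $(\beta_0,\ldots,\beta_m)$ is uniquely encoded by $\beta_1$ together with, for each $j\in\{1,\ldots,g_1\}$, a splitting sub-sequence $\bar\beta^{(j)}$ of the block $B_j^{(1)}$; correspondingly the indices $\{2,\ldots,m\}$ partition into disjoint sets $I_1,\ldots,I_{g_1}$ recording in which sub-tree each subsequent split takes place, and the split sizes $g_i$ and offspring sizes $\ell_i$ for $i\in I_j$ are precisely the data for $\bar\beta^{(j)}$ and the associated marked births in the $j$-th sub-tree. Applying the inductive hypothesis to each sub-tree, the joint conditional density (given the first-split event) factorises over $j$ as
\[
\prod_{j=1}^{g_1}\frac{F_{T-t_1}'(e^{-\theta})}{\mathbb{E}[Z_{T-t_1}^{(k_j^{(1)})}e^{-\theta Z_{T-t_1}}]}\prod_{i\in I_j}\ell_i^{(g_i)}\bigl(\mathbb{E}[e^{-\theta Z_{T-t_i}}]\bigr)^{\ell_i-g_i}p_{\ell_i}F_{T-t_i}'(e^{-\theta})^{g_i-1}\ud t_i.
\]

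Finally, multiplying this conditional factorisation by the first-split density from \eqref{eq:firstsplit} yields the desired global formula after an elementary cancellation: the numerator $\prod_j\mathbb{E}[Z_{T-t_1}^{(k_j^{(1)})}e^{-\theta Z_{T-t_1}}]$ in \eqref{eq:firstsplit} exactly cancels with the product of denominators from the $g_1$ sub-tree contributions, while the $g_1$ copies of $F_{T-t_1}'(e^{-\theta})$ produced by the sub-trees combine with the single factor $1/F_{T-t_1}'(e^{-\theta})$ from \eqref{eq:firstsplit} to give $F_{T-t_1}'(e^{-\theta})^{g_1-1}$, which is precisely the split factor for $i=1$ in the target product. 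The remaining prefactor $F_T'(e^{-\theta})/\mathbb{E}[Z_T^{(k)}e^{-\theta Z_T}]$ is carried through unchanged from \eqref{eq:firstsplit}. The one part requiring care --- and really the only obstacle --- is the combinatorial bookkeeping of the unique decomposition of a splitting sequence into sub-sequences along the $g_1$ initial blocks so that the branching Markov property can be applied cleanly; once this decomposition is in place, the rest is the straightforward cancellation above.
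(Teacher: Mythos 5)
Your proof is correct and follows essentially the same route as the paper's own argument: induction on $m$, base case $m=1$ via Lemma~\ref{lem:firstsplit}, and inductive step by conditioning on the first split, invoking the branching Markov property (Lemma~\ref{branlem}) to obtain independent copies of $\mathbb{Q}^{(k_j)}_{\theta,T-t_1}$ on the sub-trees, and multiplying through the first-split density with the cancellation $\prod_j \mathbb{E}[Z_{T-t_1}^{(k_j)}e^{-\theta Z_{T-t_1}}]\cdot F'_{T-t_1}(e^{-\theta})^{g_1}/F'_{T-t_1}(e^{-\theta}) = F'_{T-t_1}(e^{-\theta})^{g_1-1}$. The only cosmetic differences are that you spell out the $m=1$ case explicitly (which the paper calls ``immediate'') and use index sets $I_1,\ldots,I_{g_1}$ instead of the paper's double-indexed reindexing $(\tau_{i,j})$, but the underlying decomposition of the splitting sequence along the blocks of $\beta_1$ is the same.
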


\begin{proof}
We induct on $m$. The case $m = 1$ follows immediately from the previous lemma.

We now assume the result holds for $m' = 1,\ldots,m-1$ splits, and prove the result holds for $m' =m$ splits. Let $p_1 := \{ \Gamma_1,\ldots,\Gamma_{g_1} \}$ be the partition after the first split, so that $\Gamma_i$ contains $k_i$ elements. According to the symmetry lemma, after the first split time $\tau_1 = t_1$, the spines in each of the $g_1$ different groups behave independently from one another, and as if under $\mathbb{Q}^{(k_i)}_{\theta,T-t_1}$.

Consider now each of the subsequent splitting events $\tau_2,\ldots,\tau_m$. Each of these splitting events corresponds to some subblock of some $\Gamma_j$ ($1 \leq j \leq g_1$) breaking into smaller blocks. As such, we can reindex the $(\tau_i)_{i = 2,\ldots,m}$ as $(\tau_{i,j})_{1 \leq i \leq m_j, 1 \leq j \leq g_1}$, where $m_j\geq 0$ is the number of subsequent splitting event involving a subblock of $\Gamma_j$. Specifically, for $i \geq 1$, $\tau_{i,j}$ is the $i^{\text{th}}$ time, after $\tau_1$, some subblock of $\Gamma_j$ (possibly equal to $\Gamma_j$ itself) breaks into smaller sub-blocks. Under the same reindexing we can reindex $(t_i)_{i =2,\ldots,m}$ to $(t_{i,j})_{1 \leq i \leq m_j, 1 \leq j \leq g_1}$, $(\ell_i)_{i =2,\ldots,m}$ to $(\ell_{i,j})_{1 \leq i \leq m_j, 1 \leq j \leq g_1}$, and $(g_i)_{i=2,\ldots,m}$  to $(g_{i,j})_{1 \leq i \leq m_j, 1 \leq j \leq g_1}$.

Let $\mathcal{T}(\xi) := (\mathcal{T}_0,\ldots,\mathcal{T}_m)$ be the splitting sequence of the partition $\{1,\ldots,k\}$ associated with the spine splitting times. Given a block $\Gamma_j$ of $\mathcal{T}_1$, we define $\mathcal{T}^{\Gamma_j}(\xi) = (\mathcal{T}^{\Gamma_j}_0,\ldots,\mathcal{T}^{\Gamma_j}_{m_j}) $ to be the splitting sequence of $\Gamma_j$ associated with any spine splitting event involving a spine in $\Gamma_j$ after time $t_1$. $\mathcal{T}^{\Gamma_j}(\xi) = (\mathcal{T}^{\Gamma_j}_0,\ldots,\mathcal{T}^{\Gamma_j}_{m_j})$ is a sequence of partitions of $\Gamma_j$ with the property that $\mathcal{T}^{\Gamma_j}_0 = \{ \Gamma_j\}$, $\mathcal{T}^{\Gamma_j}_{m_j}$ is the singletons, and each $\mathcal{T}^{\Gamma_j}_{\ell+1}$ is obtained from $\mathcal{T}^{\Gamma_j}_\ell$ by breaking precisely one block of $\mathcal{T}^{\Gamma_j}_\ell$ into two or more sub-blocks. 

\begin{figure}[h!]
\centering
\begin{tikzpicture}[xscale=1,yscale=1]
\draw[gray] (0,-4.7) -- (0,3.3);
\draw[gray] (10,-4.7) -- (10,3.3);
\node at (0,-5)   (a) {Time $0$};
\node at (10,-5)   (a) {Time $T$};

\draw[very thick] (0,0) -- (3,0);
\node at (0.7,0.3)   (a) {\footnotesize{$\{1,2,3,4,5,6,7,8,9\}$}};
\draw[very thick] (3,-2) -- (3,2);
\node at (3.7,-1.7)   (a) {\footnotesize{$\{3,4,6,7,8,9\}$}};
\draw[very thick] (3,-2) -- (5.9,-2);
\draw[very thick] (5.9,-3.6) -- (5.9,-0.5);
\draw[very thick] (5.9,-0.5) -- (10,-0.5);
\node at (10.3,-0.5)   (a) {$3$};

\draw[thick, dotted] (3,-2) -- (3, -4.7);
\node at (3,-5)   (a) {$\tau_1$};

\draw[thick, dotted] (5.9,-3.6) -- (5.9, -4.7);
\node at (5.9,-5)   (a) {$\tau_2$};

\draw[thick, dotted] (7.6,1) -- (7.6, -4.7);
\node at (7.6,-5)   (a) {$\tau_3$};

\draw[thick, dotted] (8.3,-4.0) -- (8.3, -4.7);
\node at (8.3,-5)   (a) {$\tau_4$};

\draw[very thick] (5.9,-1.4) -- (10,-1.4);
\node at (10.3,-1.4)   (a) {$8$};

\draw[very thick] (5.9,-3.6) -- (8.3,-3.6);

\draw[very thick] (8.3,-2.2) -- (8.3,-4.0);
\draw[very thick] (8.3,-2.2) -- (10,-2.2);
\node at (10.3,-2.2)   (a) {$4$};
\node at (10.3,-2.8)   (a) {$6$};
\node at (10.3,-3.4)   (a) {$7$};
\node at (10.3,-4.0)   (a) {$9$};

\draw[very thick] (8.3,-4.0) -- (10,-4.0);
\draw[very thick] (8.3,-3.4) -- (10,-3.4);
\draw[very thick] (8.3,-2.8) -- (10,-2.8);

\draw[very thick] (3,0.3) -- (10,0.3);
\node at (10.3,0.3)   (a) {$1$};

\node at (3.7,0.6)   (a) {\footnotesize{$\{1\}$}};

\draw[very thick] (3,2) -- (7.6,2);
\node at (3.7,2.3)   (a) {\footnotesize{$\{2,5\}$}};
\draw[very thick] (7.6,1) -- (7.6,2.5);
\draw[very thick] (7.6,1) -- (10,1);
\node at (10.3,1)   (a) {$5$};
\draw[very thick] (7.6,2.5) -- (10,2.5);
\node at (10.3,2.5)   (a) {$2$};
\end{tikzpicture}
\caption{The ancestral tree of $k=9$ spines. After the initial split at time $\tau_1$ into blocks three blocks, the remaining split times and the splitting process can be divided across these three blocks.}
\label{fig:mahl}
\end{figure}
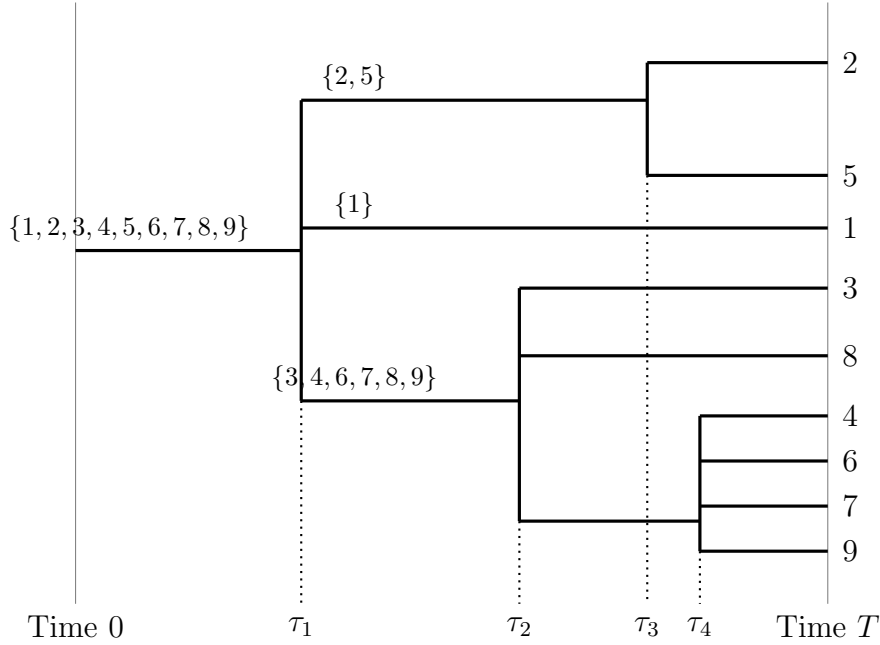

Let us work through some aspects of the example in Figure \ref{fig:mahl}. Here after the first split we break into three blocks, $\Gamma_1 = \{1\}, \Gamma_2 = \{2,5\}$ and $\Gamma_3 = \{3,4,6,7,8,9\}$. Each block $\Gamma_i$ has $m_i$ subsequent splits; here, $m_1 = 0, m_2 = 1, m_3 = 2$. We can reindex the subsequent split times $\tau_2,\tau_3,\tau_4$, so that for instance $\tau_{1,3} = \tau_2$ and $\tau_{2,3} = \tau_4$. Let us consider the sub- splitting process associated with the block $\Gamma_3 = \{3,4,6,7,8,9\}$. This is given by $\mathcal{T}^{\Gamma_3} = (\mathcal{T}^{\Gamma_3}_0,\mathcal{T}^{\Gamma_3}_1,\mathcal{T}^{\Gamma_3}_2)$, where 
$\mathcal{T}^{\Gamma_3}_0 = \{ \Gamma_3 \}$ is the partition of $\Gamma_3$ into a single block, $\mathcal{T}^{\Gamma_3}_1 = \{ \{3\}, \{4,6,7,9\}, \{8\} \}$, and finally $\mathcal{T}^{\Gamma_3}_2 = \{ \{ x\} : x \in \Gamma_3 \}$ is the singletons.

Write $\Gamma := \{ \tau_1 \in \ud t_1,\ldots,\tau_m \in \ud t_m, \mathcal{T}(\xi) = (\beta_0,\ldots,\beta_m), L_{\tau_1} = \ell_1,\ldots, L_{\tau_m} = \ell_m \}$. We can then decompose the event $\Gamma$ as
\begin{align} \label{eq:basil}
\Gamma = \{ \tau_1 \in \ud t_1, \mathcal{T}_1 = \beta_1, L_{\tau_1 } = \ell_1 \} \cap \bigcap_{j=1}^{g_1} \Gamma_j
\end{align}
where 
\begin{align*}
\Gamma_j = \{ \tau_{i,j} \in \ud t_{i,j} ~ \forall 1 \leq i \leq m_j , \mathcal{T}^{\Gamma_j} = (\beta^{\Gamma_j}_0,\ldots,\beta^{\Gamma_j}_{m_j}) \}.
\end{align*}
Now according to the symmetry lemma, conditionally on the event $ \{ \tau_1 \in \ud t_1, \mathcal{T}_1 = \beta_1, L_{\tau_1}=\ell_1 \}$, the spines in group $\Gamma_j$ behave as if under $\mathbb{Q}_{\theta,T-t_1}^{(k_1)}$. Using the inductive hypothesis (which we may use since $m_j < m$), we have
\begin{align} \label{eq:basil2}
&\mathbb{Q}_{\theta,T}^{(k)}( \Gamma_j | \tau_1 = t_1,\mathcal{T}_1 = \beta_1, L_{\tau_1} = \ell_1 ) \nonumber \\
&=  \frac{F_{T-t_1}'(e^{-\theta}) }{\mathbb{E}[Z_{T-t_1}^{(k_j)}e^{ - \theta Z_{T-t_1} }]}  \prod_{ i = 1}^{m_j} \ell_{i,j}^{(g_{i,j})} \left(\mathbb{E}[e^{-\theta Z_{(T-t_1)-(t_{i,j}-t_1)}}]\right)^{\ell_{i,j}-g_{i,j}}p_{\ell_{i,j}} F_{(T-t_1)-(t_{i,j}-t_1)}'(e^{-\theta})^{g_{i,j}-1}\ud t_{i, j}.
\end{align}
Additionally using the fact that the spines in groups $\Gamma_1,\ldots,\Gamma_{g_1}$ are independent after time $t_1$, using \eqref{eq:basil2} in \eqref{eq:basil} (and using the simplification $(T-t_1)-(t_{i,j}-t_1) = T-t_{i,j}$) we have 
\begin{align} \label{eq:basil3}
&\mathbb{Q}_{\theta,T}^{(k)}( \Gamma | \tau_1 = t_1,\mathcal{T}_1 = \beta_1, L_{\tau_1} = \ell_1 ) \nonumber\\
&= \prod_{j=1}^{g_1} \frac{F_{T-t_1}'(e^{-\theta}) }{\mathbb{E}[Z_{T-t_1}^{(k_j)}e^{ - \theta Z_{T-t_1} }]}  \prod_{ i = 1}^{m_j} \ell_{i,j}^{(g_{i,j})} \left(\mathbb{E}[e^{-\theta Z_{T-t_{i,j}}}]\right)^{\ell_{i,j}-g_{i,j}}p_{\ell_{i,j}} F_{T-t_{i,j}}'(e^{-\theta})^{g_{i,j}-1}\ud t_{i, j}.
\end{align}
Reaggregating all of the indices, \eqref{eq:basil3} simplifies to 
\begin{align} \label{eq:wroclaw}
&\mathbb{Q}^{(k)}_{\theta, T}\Big( \Gamma  \Big|  \tau_1 = t_1, \mathcal{T}_1 = \beta_1, L_{\tau_1} = \ell_1    \Big) \nonumber \\
&= \frac{ F^\prime_{T-t_1}(e^{-\theta})^{g_1} }{ \prod_{j=1}^{g_1} \mathbb{E}[Z_{T-t}^{(k_j)} e^{ - \theta Z_{T-t_1} } ] }  \prod_{ i = 2}^m \ell_i^{(g_i)}\left(\mathbb{E}[e^{-\theta Z_{(T-t_1)-(t_i-t_1)}}]\right)^{\ell_i-g_i}p_{\ell_i} F_{(T-t_1)-(t_i-t_1)}'(e^{-\theta})^{g_i-1} \mathrm{d}t_i.
\end{align}
Now according to Lemma \ref{lem:firstsplit} we have 
\begin{align} \label{eq:wroclaw2}
\mathbb{Q}^{(k)}_{\theta, T}\Big(\tau_1 \in \ud t_1; \mathcal{T}_1 = \beta_1 ; L_{\tau_1}=\ell_1 \Big) =\ell_1^{(g_1)}\left(\mathbb{E}[e^{-\theta Z_{T-t_1}}]\right)^{\ell_1-g_1}p_\ell \frac{\prod_{i=1}^{g_1} \mathbb{E}[Z^{(k_i)}_{T-t_1}e^{-\theta Z_{T-t_1}}]}{\mathbb{E}[Z^{{(k)}}_{T}e^{-\theta Z_{T}}]}\frac{F_T'(e^{-\theta})}{F_{T-t_1}'(e^{-\theta})}   r \ud t_1.
\end{align}
Multiplying \eqref{eq:wroclaw} and \eqref{eq:wroclaw2} and using the definition of $\Gamma$ we obtain
\begin{align}
& \mathbb{Q}^{(k)}_{\theta, T}\Big(\tau_1 \in \ud t_1,\ldots,\tau_m \in \ud t_m, \mathcal{T}(\xi) = (\beta_0,\ldots,\beta_m), L_{\tau_1} = \ell_1,\ldots, L_{\tau_m} = \ell_m \Big) \nonumber \\
& = \frac{F_T'(e^{-\theta}) }{\mathbb{E}[Z_T^{(k)}e^{ - \theta Z_T }]}  \prod_{ i = 1}^m \ell_i^{(g_i)}\left(\mathbb{E}[e^{-\theta Z_{T-t_i}}]\right)^{\ell_i-g_i}p_{\ell_i} F_{T-t_i}'(e^{-\theta})^{g_i-1}\ud t_{i},
\end{align}
as required.

\end{proof}

\begin{rem} \label{rem:cris}
We remark that one consequence of Lemma \ref{lem:Qmain} is that given that a split of size $g_i$ occurs at time $t_i$, the size of the birth at this time is distributed according to the probability mass function $\ell \mapsto  \ell^{(g)}\left(\mathbb{E}[e^{-\theta Z_{T-t}}]\right)^{\ell-g}p_{\ell}/\mathbb{E}[\mathbb{E}[L^{(g)}e^{-\theta Z_{T-t}}]^{L-g}]$. In particular, in the case $g=1$, we see that births off the spine are (i.e.\ births after which every spine follows the same particle) are distributed according to the distribution $\ell \mapsto  \ell \left(\mathbb{E}[e^{-\theta Z_{T-t_i}}]\right)^{\ell-1}p_{\ell}/\mathbb{E}[L\mathbb{E}[e^{-\theta Z_{T-t}}]^{L-1}]$.
\end{rem}

\begin{rem} \label{rem:cris2}
We also note it is possible by summing over $\ell_i$ to give the following projected version of Lemma \ref{lem:Qmain}:
\begin{align} \label{eq:Qmainproj}
& \mathbb{Q}^{(k)}_{\theta, T}\Big(\tau_1 \in \ud t_1,\ldots,\tau_m \in \ud t_m, \mathcal{T}(\xi) = (\beta_0,\ldots,\beta_m) \Big) \nonumber \\
& = \frac{F_T'(e^{-\theta}) }{\mathbb{E}[Z_T^{(k)}e^{ - \theta Z_T }]}  \prod_{ i = 1}^m \mathbb{E}[ L^{(g_i)} \left(\mathbb{E}[e^{-\theta Z_{T-t_i}}]\right)^{L-g_i} ] F_{T-t_i}'(e^{-\theta})^{g_i-1}\ud t_{i}.
\end{align}
\end{rem}

Finally, our results so far have given a detailed description of the spine particles under $\mathbb{Q}_{\theta,T}^{(k)}$. In our final lemma of this section, we describe the behaviour of the \emph{non-spine} particles under $\mathbb{Q}_{\theta,T}^{(k)}$.
\begin{lemma} \label{lem:nonspine}
Under $\mathbb{Q}_{\theta,T}^{(k)}$, non-spine particles behave independently of the other particles, and independently of the history of the process. Moreover, at time $t$, any non-spine particles behave like the initial ancestor under a copy of the measure $\mathbb{P}_{\theta,T-t}$ defined in \eqref{PthetaT}.

In particular, under $\mathbb{Q}_{\theta,T}^{(k)}$, at time $t$ non-spine particles undergo branching at rate
\[
r \frac{\E\left[\left(\E[e^{-\theta Z_{T-t}}]\right)^L\right]}{\E[e^{-\theta Z_{T-t}}]},
\]
and, given that they branch at time $t$, their offspring distribution is given by 
\[
\mathbb{P}_{\theta,T}(L(t)=\ell)= p_\ell \,\frac{\left(\E[e^{-\theta Z_{T-t}}]\right)^\ell}{\E\left[\left(\E[e^{-\theta Z_{T-t}}]\right)^L\right]}.
\]

\end{lemma}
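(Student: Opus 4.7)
The argument splits naturally into two steps: first, identify the law of the subtree rooted at any non-spine particle as a copy of $\mathbb{P}_{\theta,T-t}$; second, read off the branching rate and offspring distribution at time $t$ from a direct change-of-measure computation under $\mathbb{P}_{\theta,T-t}$. For the first step, fix a non-spine particle $u$ alive at time $t$ and write $\mathcal{T}^u$ for its subtree from time $t$ onward. The key observation is that because spine trajectories only flow forward in time, no descendant of $u$ can ever lie on any of the $k$ spines, so neither the indicator $\mathbf{1}_{A_{k,T}}$ nor the product $\prod_{i=1}^k \prod_{v \prec \xi_T^{(i)}} L_v$ appearing in the Radon-Nikodym derivative \eqref{newprob} depends on $\mathcal{T}^u$. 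Decomposing $Z_T = Z^u_{T-t} + (Z_T - Z^u_{T-t})$ exhibits the factor $e^{-\theta Z_T}$ as a product $e^{-\theta Z^u_{T-t}} \cdot e^{-\theta(Z_T-Z^u_{T-t})}$. Combining this factorisation with the branching property under $\mathbb{P}^{(k)}$, the subtree $\mathcal{T}^u$ is independent of the rest of the process and its law is tilted by $e^{-\theta Z^u_{T-t}}/\mathbb{E}[e^{-\theta Z_{T-t}}]$, which is exactly the definition \eqref{PthetaT} of $\mathbb{P}_{\theta,T-t}$. Equivalently, one may invoke the branching Markov property (Lemma \ref{branlem}) with $j=0$ spines, then observe that $\mathbb{Q}^{(0)}_{\theta,T-t}$ reduces to $\mathbb{P}_{\theta,T-t}$ by direct inspection of \eqref{newprob}.

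\textbf{Second step.} Let $\chi$ be the first branch time of the initial ancestor of a $\mathbb{P}_{\theta,T-t}$ process and $L$ its offspring count. On the event $\{\chi = s, L = \ell\}$, the population at time $T-t$ decomposes as a sum of $\ell$ i.i.d.\ copies of $Z_{T-t-s}$ under $\mathbb{P}$, so the change of measure \eqref{PthetaT} gives
\[
\mathbb{P}_{\theta,T-t}(\chi \in \mathrm{d}s,\, L = \ell) = \frac{r\, e^{-rs}\, p_\ell \left(\mathbb{E}[e^{-\theta Z_{T-t-s}}]\right)^\ell}{\mathbb{E}[e^{-\theta Z_{T-t}}]} \, \mathrm{d}s.
\]
Summing over $\ell$ and evaluating at $s=0$ (using $\mathbb{P}_{\theta,T-t}(\chi > 0) = 1$) recovers the instantaneous branching rate at absolute time $t$ as $r\,\mathbb{E}[(\mathbb{E}[e^{-\theta Z_{T-t}}])^L]/\mathbb{E}[e^{-\theta Z_{T-t}}]$, while dividing the joint density by the marginal density of $\chi$ at $s=0$ produces the claimed size-biased and discounted offspring distribution.

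\textbf{Main obstacle.} There is no conceptually deep difficulty here; the entire argument is essentially bookkeeping. The one subtlety, perhaps easy to overlook, is that the offspring product $\prod_{i,v} L_v$ in \eqref{newprob} is entirely insensitive to any subtree rooted at a non-spine particle---a consequence of the forward-in-time propagation of spines---which is precisely what makes the Radon-Nikodym derivative cleanly decouple into a spine piece and a non-spine piece and brings $\mathbb{P}_{\theta,T-t}$ into the picture.
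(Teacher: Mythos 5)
Your proof is correct and follows essentially the same route as the paper: you factor the Radon--Nikodym derivative \eqref{newprob} through the decomposition $Z_T = Z_T^u + (Z_T - Z_T^u)$, observe that the spine-dependent factors are insensitive to the subtree of a non-spine particle, and thereby identify the subtree's conditional law as $\mathbb{P}_{\theta,T-t}$, before reading off the rate and offspring law from a short-time change-of-measure calculation. The only cosmetic difference is that you compute the joint density $\mathbb{P}_{\theta,T-t}(\chi\in\mathrm{d}s, L=\ell)$ and evaluate at $s=0$, whereas the paper expands $\mathbb{P}_{\theta,T}(\Gamma(h,\ell))$ to order $h$; these are the same computation.
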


\begin{proof}
Suppose $v$ is a non-spine particle alive at time $t$. Then the Radon-Nikodym derivative of $\mathbb{Q}_{\theta,T}^{(k)}$ against $\mathbb{P}$ defined in \eqref{newprob} may be written
\begin{align*}
\frac{\mathbf{1}_{A_{k, T}}\left(\prod_{i=1}^k\prod_{v\prec \xi^{(i)}_T} L_v\right) e^{-\theta Z_T}}{\mathbb{E}^{(k)}\Big[Z_T^{(k)} e^{-\theta Z_T}\Big]} = e^{ - \theta Z_T^v } \frac{\mathbf{1}_{A_{k, T}}\left(\prod_{i=1}^k\prod_{v\prec \xi^{(i)}_T} L_v\right) e^{-\theta (Z_T-Z_T^v)}}{\mathbb{E}^{(k)}\Big[Z_T^{(k)} e^{-\theta Z_T }\Big]} ,
\end{align*}
where $Z_T^v$ is the number of particles alive at time $t$ descended from $v$. Now under $\mathbb{P}$, since $v$ is not carrying any marks at time $t$, the random variables $(Z_T - Z_T^v)$ and $\mathbf{1}_{A_{k, T}}\left(\prod_{i=1}^k\prod_{v\prec \xi^{(i)}_T} L_v\right)$ are independent of the particle $v$ and of the behaviour of $v$ over the course of $[t,T]$. It follows that we can write 
\begin{align*}
\frac{\mathbf{1}_{A_{k, T}}\left(\prod_{i=1}^k\prod_{v\prec \xi^{(i)}_T} L_v\right) e^{-\theta Z_T }}{\mathbb{E}^{(k)}\Big[Z_T^{(k)} e^{-\theta Z_T}\Big]} = e^{ - \theta Z_T^v } Q_T^{(v)},
\end{align*}
where $Q_T^{(v)}$ is a random variable independent of $v$ and the descendents of $v$. By the branching property, under $\mathbb{P}$, $Z_T^v$ is distributed like $Z_{T-t}$. It follows that under $\mathbb{Q}_{\theta,T}^{(k)}$, the non-spine particle $v$ behaves independently of the other particles in the system at time $t$, and has its behaviour tilted by the exponential factor $e^{ - \theta Z_T^v}$. In short, $v$ behaves like the initial ancestor under a copy of the measure $\mathbb{P}_{\theta,T-t}$. 

In order to now describe the branching of $v$ at time $t$, we therefore need only describe the branching of the initial ancestor under a copy of $\mathbb{P}_{\theta,T}$. (Thereafter we can replace $T$ by $T-t$ as necessary.)

We now note that if $\Gamma(h,\ell)$ is the indicator function of the event that the initial ancestor dies in the time interval $[0,h)$ and has $\ell$ offspring, then using the definition \eqref{PthetaT} of the change of measure $\mathbb{P}_{\theta,T}$ we have
\begin{align} \label{eq:crr}
\mathbb{P}_{\theta,T}( \Gamma(h,\ell) ) := \frac{\mathbb{P} [ \Gamma(h,\ell) e^{ - \theta Z_T } ]  }{\mathbb{P}[e^{ - \theta Z_T} ] } 
\end{align}
For small $h$, we then have
\begin{align*}
\mathbb{P} [ \Gamma(h,\ell) e^{ - \theta Z_T } ]  &= r h p_\ell ( h + o(h)) \mathbb{P} [ e^{ - \theta Z_T } | \Gamma(h,\ell)   ]\\
&=  r h p_\ell ( h + o(h)) \mathbb{P}[e^{ - \theta Z_{T-h}}]^\ell\\
&=  r h p_\ell \mathbb{P}[e^{ - \theta Z_{T}}]^\ell  + o(h).
\end{align*}
Plugging this into \eqref{eq:crr}, we see that $\mathbb{P}_{\theta,T}( \Gamma(h,\ell) ) = r h p_\ell \mathbb{P}[e^{ - \theta Z_{T}}]^\ell / \mathbb{P}[e^{- \theta Z_T}] + o(h) $. 

Since under $\mathbb{P}_{\theta,T}$ the particles at time $t$ behave independently, and as if under an independent copy of $\mathbb{P}_{\theta,T-t}$, it follows that the time $t$ rate of splitting into $\ell$ particles  is given by $ r p_\ell \mathbb{P}[e^{ - \theta Z_{T-t}}]^\ell / \mathbb{P}[e^{- \theta Z_{T-t}}]$. Rearranging so that this quantity is proportional to a probability mass function in $\ell$, we obtain the stated branching rates and probabilities.
\end{proof}

\subsection{Subpopulation sizes}
It will be important later to understand the sizes of subpopulations coming off the spine under $\mathbb{Q}_{\theta,T}^{(k)}$.

Suppose a non-spine particle $v$ is alive at time $t$ and $Z_T^v$ represents the number of descendants of $v$ alive at time $T$, then, by Lemma \ref{lem:nonspine} and Definition \eqref{PthetaT}, we have 
\begin{equation}\label{nonspinepop}
\mathbb{Q}_{\theta,T}^{(k)} [ e^{ - \phi Z_T^v } | \mathcal{F}_t] = \mathbb{P}_{\theta,T-t}[e^{ - \phi Z_{T-t}}]=
\frac{ \mathbb{E} [ e^{ - (\phi + \theta) Z_{T-t} } ]}{ \mathbb{E} [ e^{ - \theta Z_{T-t}} ] } .
\end{equation}

Similarly, if a particle $v$ alive at time $t$ is carrying $j$ spines and  $Z_T^v$ represents the number of descendants of $v$ alive at time $T$, then recalling Lemma \ref{branlem} and \eqref{newprob1}, we find
\begin{align*}
\mathbb{Q}_{\theta,T}^{(k)} [ e^{ - \phi Z_T^v } | \mathcal{F}_t^{(k)}] 
=\mathbb{Q}_{\theta,T-t}^{(j)} [ e^{ - \phi Z_{T-t} } ]
= \frac{ \mathbb{E} [ Z_{T-t}^{(j)} \,e^{ - (\phi + \theta) Z_{T-t} } ]}{ \mathbb{E} [ Z_{T-t}^{(j)} \,e^{ - \theta Z_{T-t}} ] } .
\end{align*}
In particular, the Laplace transform for number of births coming off a single spine branch ($k=1$) started at time $t$ (plus the spine itself) is given by
\begin{equation}\label{spinepop}
\frac{ \mathbb{E} [ Z_{T-t} \,e^{ - (\phi + \theta) Z_{T-t} } ]}{ \mathbb{E} [ Z_{T-t} \,e^{ - \theta Z_{T-t}} ] }.
\end{equation}
Moreover, since we observed that the rate of births off any spine particle and the corresponding offspring distribution are \emph{independent} of the actual number of spines following it,  the above expression remains unchanged for the number of descendants coming off any single spine branch  over time period $[T-t,T]$.

\subsection{Lineage and population decompositions for the ancestral tree} \label{sec:lineage}
We now define the subpopulations off spines. We begin by decomposing the spine tree into lineages. Recall that the split times $\tau_1< \ldots< \tau_m$ associated with the $k$ spines are the times at which a particle carrying some spines dies and these spines do not all follow the same child. With these in mind, we now define the \emph{separation times} of the spines. Define $\varsigma_1 := 0$, and for $1 \leq j \leq k-1$ define the $(j+1)^{\text{th}}$ separation time $\varsigma_{j+1}$ to be the first time that the $(j+1)^{\text{th}}$ spine is separate from spines  $1,\ldots,j$, i.e.
\begin{align} \label{eq:vardef}
\varsigma_{j+1} := \inf \{ t \geq 0: \xi_t^{(j+1)} \neq \xi_t^{(i)} ~ \text{for all $1 \leq i \leq j$} \}.
\end{align}
We note that for all $j \geq 1$, $\varsigma_{j+1}$ is equal to some $\tau_i$. In fact, if $g_i$ is the size of the split at time $\tau_i$, then there are exactly $g_i-1$ elements $j \in \{1,\ldots,k\}$ for which $\varsigma_j = \tau_i$. 
\begin{figure}[h!]\label{fig:sample}
\centering
\begin{tikzpicture}[xscale=1,yscale=1]
\draw[gray] (0,-3.7) -- (0,3.3);
\draw[gray] (10,-3.7) -- (10,3.3);
\node at (0,-4)   (a) {Time $0$};
\node at (10,-4)   (a) {Time $T$};

\draw[very thick] (0,0) -- (3,0);
\draw[very thick] (3,-2) -- (3,2);
\draw[very thick] (3,-2) -- (5.5,-2);
\draw[very thick] (5.5,-3) -- (5.5,-1.5);
\draw[very thick] (5.5,-3) -- (10,-3);
\node at (10.3,-3)   (a) {$2$};
\draw[very thick] (5.5,-1.5) -- (10,-1.5);
\node at (10.3,-1.5)   (a) {$4$};
\draw[very thick] (3,-0.5) -- (10,-0.5);
\node at (10.3,-0.5)   (a) {$1$};
\draw[very thick] (3,2) -- (6,2);
\draw[very thick] (6,1) -- (6,2.5);
\draw[very thick] (6,1) -- (10,1);
\node at (10.3,1)   (a) {$5$};
\draw[very thick] (6,2.5) -- (10,2.5);
\node at (10.3,2.5)   (a) {$3$};

\draw[thick, dotted] (3,-2) -- (3, -3.7);
\node at (3,-4)   (a) {$\tau_1$};
\draw[thick, dotted] (5.5,-2) -- (5.5, -3.7);
\node at (5.5,-4)   (a) {$\tau_2$};
\draw[thick, dotted] (6,1) -- (6, -3.7);
\node at (6,-4)   (a) {$\tau_3$};

\draw[very thick, OliveGreen] (0,0.1) -- (3.1,0.1);
\draw[very thick, OliveGreen] (3.1,-0.4) -- (3.1,0.1);
\draw[very thick, OliveGreen] (3.1,-0.4) -- (10,-0.4);

\draw[very thick, OliveGreen] (3,-1.9) -- (5.6,-1.9);
\draw[very thick, OliveGreen] (5.6,-2.9) -- (5.6,-1.9);
\draw[very thick, OliveGreen] (5.6,-2.9) -- (10,-2.9);

\draw[very thick, OliveGreen] (3,2.1) -- (5.9,2.1);
\draw[very thick, OliveGreen] (5.9,2.1) -- (5.9, 2.6);
\draw[very thick, OliveGreen] (5.9,2.6) -- (10,2.6);

\draw[very thick, OliveGreen] (5.5,-1.4) -- (10,-1.4);

\draw[very thick, OliveGreen] (6,1.1) -- (10,1.1);
\end{tikzpicture}
\caption{
A decomposition of the spine tree into lineages. Using \eqref{eq:vardef}, we see that here the separation times are given by $\varsigma_1 = 0, \varsigma_2 = \tau_1, \varsigma_3 = \tau_1, \varsigma_4 = \tau_2, \varsigma_5 = \tau_3)$.}

\end{figure}
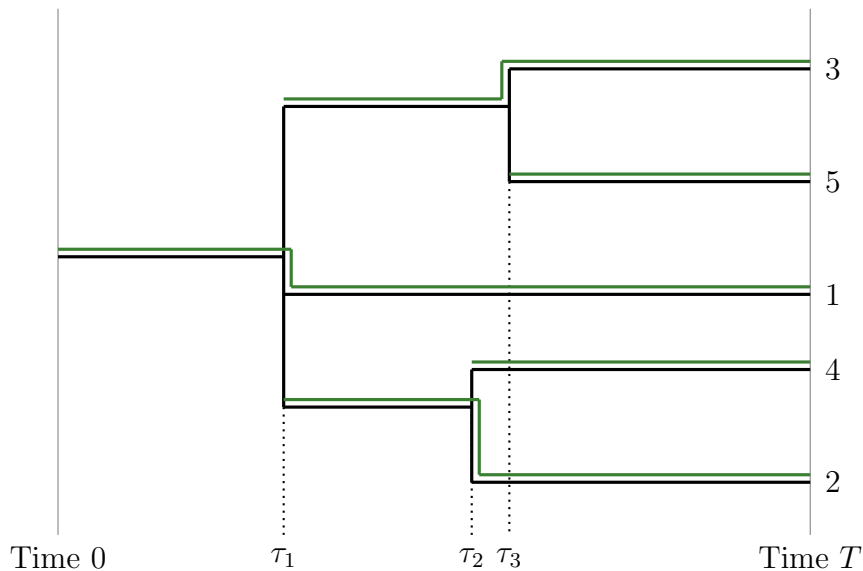

With the separation times at hand we can decompose the entire ancestral tree of the spine into $j$ lineages, where the length of lineage $j$ is given by $T - \varsigma_j$. Of course then the total length of the tree is given by $kT - \sum_{j=1}^k \varsigma_j$. This lineage decomposition affords us a decomposition of the entire population at time $T$. Giving first an informal definition, let us define
\begin{align*}
\mathcal{Z}_{T,j} := \{ u \in \mathcal{Z}_T : \text{$u$ is a descendant of a birth off the spine of the $j^{\text{th}}$ lineage} \}.
\end{align*}
Now let us a give a more precise definition. We say a particle carrying spines is \emph{non-splitting} if its death is a birth off the spine, i.e. at the death of this particle, all of the spines it was carrying follow the same child of this particle. Now define, for $1 \leq j \leq k-1$, $\mathcal{Z}_{T,j+1}$ to be the set of particles at time $T$ (excluding $\xi_T^{(j+1)}$) who are descended from any non-splitting ancestor of $\xi_T^{(j+1)}$ who is not an ancestor of any $\xi_T^{(1)},\ldots,\xi_T^{(j)}$, i.e.
\begin{align*}
\mathcal{Z}_{T,j+1} := \{ u \in \mathcal{Z}_T - \{ \xi_T^{(j+1)}\} : \exists t, \exists v \in \mathcal{Z}_t \text{ non-splitting} : v \prec u, v\prec \xi_T^{(j+1)}, v \nprec \xi_T^{(i)} ~\forall 1 \leq i \leq j  \}.
\end{align*}
We now define $\tilde{\mathcal{Z}}_{T,1},\ldots,\tilde{\mathcal{Z}}_{T,m}$ by letting
\begin{align*}
\tilde{\mathcal{Z}}_{T,j} := \{ u \in \mathcal{Z}_T : \text{$u$ is a descendent of a non-spine particle born at splitting event $\tau_j$} \}.
\end{align*} 
We now that every particle alive at time $T$ is either a spine, a descendent of a birth off the spine, or a descendent of a non-spine particle born at a splitting event. This creates a decomposition of the population at time $T$ via
\begin{align*}
\mathcal{Z}_T := \{\xi_T^{(1)},\ldots,\xi_T^{(k)}\} \cup \bigcup_{j=1}^k \mathcal{Z}_{T,j} \cup \bigcup_{j=1}^m \tilde{\mathcal{Z}}_{T,j}.
\end{align*}
Accordingly, if $Z_{T,j} := \# \mathcal{Z}_{T,j}$ and $\tilde{Z}_{T,j} := \# \tilde{\mathcal{Z}}_{T,j}$ denote cardinalities then we have
\begin{align*}
Z_T = k + \sum_{j=1}^k Z_{T,j} + \sum_{ j = 1}^m \tilde{Z}_{T,j}.
\end{align*}
Further, conditional on the separation times, the Laplace transforms for each subpopulation appearing in this decomposition can readily be written down using \eqref{nonspinepop} and \eqref{spinepop}.

\section{Uniform sampling for  critical Galton-Watson trees in the regularly varying regime} \label{sec:gen}
Throughout this section, whenever $A(\lambda)$ and $B(\lambda)$ are functions depending on a real or integer-valued parameter, we use the notation
\begin{align*}
A(\lambda) \sim B(\lambda) \qquad\textrm{as} \quad \lambda \to \infty
\end{align*}
to denote $\lim_{\lambda \to \infty} B(\lambda)/A(\lambda) = 1$. 
Let us consider a random variable $L$ taking values in $\mathbb{Z}_+=\{0,1,\ldots\}$ with the same distribution as $L_{\emptyset}$, and recall that $Z$ denotes a critical Galton-Watson process whose offspring distribution is given by $L$.   In other words,  
\[
p_n=\mathbb{P}(L=n)\quad \textrm{for } n\ge 0,\qquad  \mathbb{E}[L]=\sum_{j\ge 0}jp_j=1\qquad \textrm{and}\qquad f(s)=\mathbb{E}[s^L].
\]
Recall that we are assuming that  \eqref{eq:hyp1} holds, that is  $p_0>0$ and that for $\alpha\in(1,2]$, 
\[
f(s)=s+(1-s)^\alpha\ell\left(\frac{1}{1-s}\right),
\]
where $\ell$ is a slowly varying function at $\infty$. The asymptotic behaviour of $f$ provides   enough information about the behaviour of the offspring probabilities $p_k$, for $k$ large.

Indeed, first let us observe from using the geometric sum and interchanging the order of summation 
\[
1-f(s)=\sum_{j\ge 1}p_j(1-s^{j})=(1-s)\sum_{j\ge 1}p_j\sum_{k=0}^{j-1}s^{k} =(1-s)\sum_{k\ge 0}\overline{p}_{k}s^{k},
\]
where $\overline{p}_{j}:=\sum_{i>j}p_i$. 

Using a similar argument to obtain the  final equality below we have 
\[
\begin{split}
\frac{f(s)-s}{1-s} &= 1 - \frac{1-f(s)}{1-s} = 1-\sum_{k\ge 0}\overline{p}_{k}s^k= (1-s)\sum_{k\ge0}\overline{\overline{p}}_{k}s^k,
\end{split}
\]
where $\overline{\overline{p}}_k:=\sum_{j>k}\overline{p}_j$. 

In other words, we have
\begin{align*}
(1-s)^{\alpha - 2} \ell \left( \frac{1}{1-s} \right)  = \frac{f(s)-s}{(1-s)^2} = \sum_{k\ge0}\overline{\overline{p}}_{k}s^k.
\end{align*}
Hence Karamata's Tauberian Theorem for power series (see for instance Corollary 1.7.3 in Bingham, Goldie and Teugels \cite{BGT87}) allows us to deduce in the case $\alpha \in (1,2)$ that 
\begin{align} \label{eq:orange}
\overline{\overline{p}}_{k}\sim \frac{1}{\Gamma(2-\alpha)}k^{1-\alpha}\ell(k), \qquad\textrm{as} \quad k\to \infty,
\end{align}
and in the case $\alpha = 2$ that
\[
\sum_{ j = 0}^k  \overline{\overline{p}}_j \sim  \ell(k) \qquad\textrm{as} \quad k\to \infty.
\]
Now since the sequence $\{\overline{\overline{p}}_{k}\}_{k\ge 0}$ is monotone, from the Monotone Density Theorem (see for instance Theorem 1.7.2 in \cite{BGT87}) we obtain in the case $\alpha = 2$ that 
\begin{align} \label{eq:apple}
\overline{\overline{p}}_k \sim k^{-1} \ell(k) \qquad\textrm{as} \quad k\to \infty.
\end{align}
Consolidating \eqref{eq:orange} and \eqref{eq:apple} we have 
\begin{align} \label{eq:pear}
\overline{\overline{p}}_k \sim \tilde{Q}_\alpha k^{1-\alpha} \ell(k) \qquad\textrm{as} \quad k\to \infty,
\end{align}
where $\tilde{Q}_\alpha := \frac{1}{\Gamma(2-\alpha)}$ in the case $\alpha \in (1,2)$, and $Q_2 := 1$. 

Since the sequences $\{\overline{p}_{k}\}_{k\ge 0}$ is monotone, it follows from the Monotone density Theorem (see for instance Theorem 1.7.2 in \cite{BGT87}) we obtain in the case $\alpha \in (1,2]$ that 
\[
\overline{p}_{k}\sim Q_\alpha k^{-\alpha}\ell(k), \qquad\textrm{as} \quad k\to \infty.
\]
where $Q_\alpha = (\alpha-1)\tilde{Q}_\alpha$, so that $Q_\alpha = \frac{\alpha-1}{\Gamma(2-\alpha)}$ when $\alpha \in (1,2)$ and $Q_\alpha = 1$ when $\alpha=2$.

Our assumption \eqref{eq:hyp1} also implies that the probability of survival is regularly varying at infinity with a precise index that we  deduce below. Recall that  the moment generating function  of $Z_t$, for $t>0$, is denoted by $F_t(s)$. Thus from the backward Kolmogorov equation it satisfies
\[
\int_s^{F_t(s)}\frac{\ud y}{f(y)-y}=r t.
\]
When $s=0$, it is clear that 
\[
F(t):=F_t(0)=\mathbb{P}_1(Z_t=0)=\mathbb{P}_1(T_0\le t),
\] 
where $T_0=\inf\{s: Z_{s}=0\}$ and therefore
\[
\int_0^{F(t)}\frac{\ud y}{f(y)-y}=r t.
\]
We also introduce
\[
V(x):=\int_0^{1-1/x} \frac{\ud y}{f(y)-y}, \qquad \textrm{for}\quad x\ge1,
\]
which is increasing and concave, and denote by $R$ its inverse which is convex and increasing (see Lemma 2.1 in Pakes \cite{Pakes10}). Under our assumptions, the function $V$ can be rewritten as follows
\[
V(x)=\int_1^{x} \frac{\ud u}{u^{2-\alpha}\ell (u)}.
\]
Following the same ideas as in the proof of Proposition 1.5.8 in \cite{BGT87}, we see that
\[
(\alpha-1) V(x)\sim \frac{x^{\alpha-1}}{\ell (x)}=x^{\alpha-1}\ell_1(x), \qquad \textrm{as} \quad x\to \infty,
\]
where 
\begin{equation} \label{eq:ell1}
\ell_1(x) :=\frac{1}{ \ell (x)}
\end{equation}
is also a slowly varying function at $\infty$.  From Theorem 1.5.12 in \cite{BGT87}, we have that $R$ is regularly varying at infinity with index $1/(\alpha-1)$, i.e.
\[
R(x)\sim x^{\frac{1}{\alpha-1}}\ell_2(x), \qquad \textrm{as} \quad x\to \infty,
\]
where $\ell_2$ is another slowly varying function at $\infty$. Thus under  assumption \eqref{eq:hyp1}, we deduce  that  the survival probability $\overline{F}(t):=\mathbb{P}(Z_t>0)$ satisfies the following identity
\[
(\alpha-1)V\left(\frac{1}{\overline{F}(t)}\right)=r(\alpha-1) t=V(R((\alpha-1)rt)),
\]
implying that 

\begin{align} \label{eq:tchaik}
\overline{F}(t)\sim\frac{1}{R((\alpha-1)rt)}=c_{\alpha,r}\frac{t^{-1/(\alpha-1)}}{\ell_2(t)}, \qquad \textrm{as} \quad t\to\infty, 
\end{align}
where $c_{\alpha,r}=((\alpha-1)r)^{-1/(\alpha-1)}$ and moreover
\begin{equation}\label{poplarge}
\mathbb{E}\left[e^{-\theta{\overline{F}(t)Z_t}}\Bigg| Z_t>0\right]\xrightarrow[t\to \infty]{}1-\Big(1+\theta^{1-\alpha}\Big)^{-1/(\alpha-1)}.
\end{equation}
Let us denote by $W_{\alpha-1}$ for the r.v. whose Laplace transform is such that
\[
\mathbb{E}\left[e^{-\theta W_{\alpha-1}}\right]=1-\Big(1+\theta^{1-\alpha}\Big)^{-1/(\alpha-1)}.
\]
We note that when $\alpha=2$, the limiting random variable $W_1$ is an  exponential random variable with parameter equals 1 regardless if $\sigma^2$ is finite or infinite.

There is an interesting relationship between $\ell_1$ and $\ell_2$ which will be very useful in the sequel. To describe this relationship, we 
 introduce De Bruijn's conjugate of $\ell^*(x):=(\ell_1(x))^{\frac{1}{\alpha-1}}$, that is to say  a slowly varying function $\ell^\sharp$ such that 
\begin{equation}\label{inversel}
\ell^*(x)\ell^\sharp(x\ell^*(x))\to 1\qquad \textrm{and}\qquad\ell^\sharp(x)\ell^*(x\ell^\sharp(x))\to 1 \qquad \textrm{as} \quad x\to \infty.
\end{equation}
Moreover $\ell^{\sharp\sharp}(x)\sim \ell^*(x)$ as $x\to \infty$ (see Theorem 1.5.13 in \cite{BGT87}). According to Proposition 1.5.15 in \cite{BGT87}, we have 
\begin{equation} \label{inversel2}
\ell_2(x)\sim \ell^\sharp(x^{1/(\alpha-1)})\qquad \textrm{as}\quad x\to \infty.
\end{equation}
For instance if $\ell(x)\to q$, as $x\to \infty$, we observe that $\ell^*(x)$ goes to $q^{-1/(\alpha-1)}$ as $x\to \infty$; and moreover $\ell_2(x)$ goes to $q^{1/(\alpha-1)}$ as $x\to \infty$.

\subsection{Properties of $W_{\alpha-1}$}
When $\alpha=2$, it is not so difficult to verify that $W_1$ possesses all positive moments. More precisely,   for  $k\ge 1$, we have
\begin{equation}\label{explicitk}
\mathbb{E}\Big[W_1^k\Big]=k! \qquad\textrm{and}\qquad \mathbb{E}\Big[W_1^k e^{-\theta W_1}\Big]=\frac{ k!}{(1+\theta)^{k+1}}.
\end{equation}
The compensated moments of $W_{\alpha-1}$ in the setting $\alpha \in (1,2)$ are more involved. To compute these, write
\[
f_\alpha(\theta):= \mathbb{E}[e^{-\theta W_{\alpha-1}}]= g_\alpha(h_\alpha(\theta))
\]
where $g_\alpha(\theta) =  1 - (1 + \theta)^{-\frac{1}{\alpha-1}}$ and $h_\alpha(\theta) = \theta^{1-\alpha}$.
We can compute the derivatives of $f_\alpha(\theta)$ using Fa\`a di Bruno formula's (equation (2) of \cite{JP22}), which states that given sufficiently differentiable functions $f$ and $g$ we have 
\begin{align*}
\frac{\mathrm{d}^k}{\mathrm{d}\theta^k} g(h(\theta)) = \sum_{ \pi \in \mathcal{P}_k } g^{(\# \pi)}(h(\theta)) \prod_{\Gamma \in \pi} h^{(\# \Gamma)}(\theta),
\end{align*}
where the sum is taken over all set partitions $\pi$ of $\{1,\ldots,k\}$, and given a partition $\pi$, the product is taken over all blocks $\Gamma$ of $\pi$. 

Indeed, note the $j^{\text{th}}$ derivatives of $g_\alpha$ and $h_\alpha$ are given by 
\begin{align*}
g_\alpha^{(j)}(\theta) = (-1)^{j-1} (1+\theta)^{-\frac{1}{\alpha-1} - j } \prod_{i=1}^j \left( \frac{1}{\alpha-1} + i-1\right) ,
\end{align*}
and
\begin{align*}
h_\alpha^{(j)}(\theta) = (-1)^{j}\theta^{1-\alpha-j} \prod_{i=1}^j (\alpha+i-2).
\end{align*}
It follows from Fa\`a di Bruno's formula that
\begin{align*}
\frac{\mathrm{d}^k}{\mathrm{d}\theta^k} f_\alpha(\theta) = \sum_{ \pi \in \mathcal{P}_k } (-1)^{\# \pi -1} (1 + \theta^{1-\alpha})^{ - \frac{1}{\alpha-1} - \# \pi } \prod_{i=1}^{\# \pi} \left( \frac{1}{\alpha-1}+i-1 \right)  \prod_{\Gamma \in \pi} \left\{ (-1)^{\# \Gamma} \theta^{1-\alpha- \# \Gamma } \prod_{i=1}^{\# \Gamma} (\alpha+i-2) \right\},
\end{align*}
which, using $\sum_{\Gamma \in \pi} \# \Gamma = k$, simplifies to 
\begin{align} \label{eq:gsp}
\frac{\mathrm{d}^k}{\mathrm{d}\theta^k} f_\alpha(\theta) = (-1)^{k-1} \theta^{-k} \sum_{ \pi \in \mathcal{P}_k } (-1)^{\# \pi} (1 + \theta^{1-\alpha})^{ - \frac{1}{\alpha-1} - \# \pi } \theta^{(1-\alpha)\# \pi} \prod_{i=1}^{\# \pi} \left( \frac{1}{\alpha-1}+i-1 \right) \prod_{\Gamma \in \pi} \prod_{i=1}^{\# \Gamma} (\alpha+i-2).
\end{align}
It follows that the first compensated moment is
\begin{equation} \label{eq:firstmoment}
\mathbb{E}\left[W_{\alpha-1}e^{-\theta W_{\alpha-1}}\right]=-f^\prime_\alpha(\theta)=\frac{1}{(1+\theta^{\alpha-1})^{1+1/(\alpha-1)}},
\end{equation}
and in particular, $\mathbb{E}[W_{\alpha-1}]=1$.
For the second compensated moment, we get
\[
\mathbb{E}\Big[W^2_{\alpha-1}e^{-\theta W_{\alpha-1}}\Big]=f^{\prime\prime}_\alpha(\theta)=\frac{\alpha}{(1+\theta^{\alpha-1})^{2+1/(\alpha-1)}\theta^{2-\alpha}}.
\]
Finally, the third compensated moment satisfies 
\[
\begin{split}
\mathbb{E}\Big[W^3_{\alpha-1}e^{-\theta W_{\alpha-1}}\Big]&=-f^{(3)}_\alpha(\theta)=\frac{\alpha}{(1+\theta^{\alpha-1})^{2+1/(\alpha-1)}}\left(\frac{2\alpha-1}{(1+\theta^{\alpha-1})\theta^{2(2-\alpha)}}+\frac{2-\alpha}{\theta^{3-\alpha}}\right)\\
&=\frac{\alpha}{\theta^2(1+\theta^{\alpha-1})^{3+1/(\alpha-1)}}\left((\alpha+1)\theta^{2(\alpha-1)}+(2-\alpha)\theta^{\alpha-1}\right).
\end{split}
\]
We note at this stage that $\mathbb{E}[W_{\alpha-1}^2]$ is infinite whenever $\alpha \in (1,2)$, and hence every higher moment $\mathbb{E}[W_{\alpha-1}^k]$ is also infinite for $k \geq 2$. 
We can, however, study the asymptotics of $\mathbb{E}[W_{\alpha-1}^ke^{-\theta W_{\alpha-1}}]$ as $\theta \to 0$. Indeed, from \eqref{eq:gsp}, we see that these asymptotics concentrate on the partition minimising the power $(1-\alpha)\# \pi$, i.e. the partition of $\{1,\ldots,k\}$ into $k$ singletons. In particular, one can show that 
\begin{equation}\label{kmomW}
\mathbb{E}\Big[W^k_{\alpha-1}e^{-\theta W_{\alpha-1}}\Big]\sim \alpha(k-1-\alpha)\cdots(2-\alpha)\theta^{\alpha-k}=\alpha\frac{\Gamma(k-\alpha)}{\Gamma(2-\alpha)}\theta^{\alpha-k}\qquad \textrm{as}\quad \theta\to 0.
\end{equation}
\subsection{Critical Galton-Watson processes in the regularly varying regime.}
In the sequel we will require an understanding of the factorial compensation moments of our Galton-Watson processes at large times. To begin computing these in this section, we start by noting from \eqref{eq:tchaik} to obtain the second line below, and \eqref{poplarge} to obtain the third, for $\rho \in [0,1)$ we have 
\begin{align} \label{eq:plantain}
1 - \mathbb{E}\left[e^{-\theta \overline{F}(T)Z_{T(1-\rho)}}\right]&= \overline{F}(T(1-\rho)) \mathbb{E}\left[ 1 - e^{-\theta \frac{\overline{F}(T)}{\overline{F}(T(1-\rho)} \overline{F}(T(1-\rho)) Z_{T(1-\rho)}} \Big| Z_{T(1-\rho)} > 0 \right] \nonumber \\
&\sim  (1-\rho)^{-\frac{1}{\alpha-1}} \overline{F}(T) \mathbb{E}\left[ 1 - e^{-\theta (1-\rho)^{\frac{1}{\alpha-1}} \overline{F}(T(1-\rho)) Z_{T(1-\rho)}} \Big| Z_{T(1-\rho)} > 0 \right] \nonumber \\
&\sim \frac{\overline{F}(T)}{ (1 - \rho + \theta^{-(\alpha-1)} )^{\frac{1}{\alpha-1}} }
\end{align}
as $T \to \infty$. We remark that in each case above, the convergence in question is a consequence of the monotone convergence theorem.

Recall that $n^{{(k)}}=n(n-1)\cdots(n-k+1)$ for $n\ge k$. Then for $k \geq 1$ and $\rho \in [0,1)$, as a consequence of \eqref{poplarge} we have 

\begin{equation}\label{kmomentsrho}
\begin{split}
\mathbb{E}\left[Z_{T(1-\rho)}^{{(k)}}e^{-\theta \overline{F}(T)Z_{T(1-\rho)}}\right]\sim
\overline{F}(T(1-\rho))^{-(k-1)} \mathbb{E}\left[W_{\alpha-1}^k e^{-\theta (1-\rho)^{1/(\alpha-1)}W_{\alpha-1}}\right], 
\end{split}
\end{equation}
as $T$ goes to infinity.

When $\alpha=2$, the previous asymptotic is much simpler to write.  Indeed, for $T$ large enough, we have 
\begin{equation*} 
\mathbb{E}\left[Z^{(k)}_{T(1-\rho)}e^{-\theta \overline{F}(T)Z_{T(1-\rho)}}\right]\sim r^{k-1} (1-\rho)^{k-1}T^{k-1} \ell^{k-1}_3(T)\frac{ k!}{(1+\theta(1-\rho))^{k+1}},
\end{equation*}
and
\[
\mathbb{E}\left[Z^{{(k)}}_{(1-p)T}e^{-\theta \overline{F}(T) Z_{(1-p)T}} \Bigg| Z_{(1-p)T}\ge k\right]\sim r^{k} (1-\rho)^{k}T^{k} \ell^{k}_3(T)\frac{ k!}{(1+\theta(1-p))^{k+1}},
\]
where  $\ell_3(T)=\frac{1}{\ell_2(T)}$.
Recalling the definition \eqref{newprob} of $\mathbb{Q}^{(k)}_{\theta, T}$ we study the asymptotics of $Z_T$ under the rescaling $\theta \to \theta \overline{F}(T)$. From the asymptotic in \eqref{kmomentsrho} with $\rho=0$, we have
\begin{equation}\label{qklaplace}
\begin{split}
\mathbb{Q}^{(k)}_{\theta \overline{F}(T), T}\Big[e^{ \sam{-}\varphi \overline{F}(T)Z_T}\Big]&=\frac{\mathbb{E}\Big[Z^{{(k)}}_Te^{\sam{-}(\theta+\varphi )\overline{F}(T)Z_T}\Big]}{\mathbb{E}\Big[Z^{{(k)}}_T e^{\sam{-}\theta \overline{F}(T)Z_T}\Big]}\xrightarrow[T\to\infty]{}\frac{\mathbb{E}\left[W_{\alpha-1}^k e^{-(\theta +\varphi)W_{\alpha-1}}\right]}{\mathbb{E}\left[W_{\alpha-1}^k e^{-\theta W_{\alpha-1}}\right]}.
\end{split}
\end{equation}
In particular, the population off a single spine satisfies
\[
\begin{split}
\mathbb{Q}^{(1)}_{\theta  \overline{F}(T), T}\left[e^{-\varphi \overline{F}(T) Z_T}\right]&\xrightarrow[T\to\infty]{}\frac{\mathbb{E}[W_{\alpha-1} e^{-(\theta+\varphi) W_{\alpha-1}}]}{\mathbb{E}[W_{\alpha-1} e^{-\theta W_{\alpha-1}}]}\\
&=\left(\frac{1+\theta^{\alpha-1}}{1+(\theta+\varphi)^{\alpha-1}}\right)^{1+1/(\alpha-1)}.
\end{split}
\]
Moreover subpopulations along spine branch of length $(1-\rho)T$ with $\rho\in[0,1)$ satisfy
\[
\begin{split}
\mathbb{Q}^{(1)}_{\theta F(T), T}\left[e^{-\varphi \overline{F}(T) Z_{(1-\rho)T}}\right]&\xrightarrow[T\to \infty]{}\left(\frac{1+\theta^{\alpha-1}(1-\rho)}{1+(\theta+\varphi)^{\alpha-1}(1-\rho)}\right)^{1+1/(\alpha-1)}.\\
\end{split}
\]
When $\alpha=2$ we have an explicit expression,  for $k\ge 1$, that is 
\[
\begin{split}
\mathbb{Q}^{(k)}_{\theta \overline{F}(T), T}\Big[e^{\sam{-} \varphi \overline{F}(T)Z_T}\Big]&\xrightarrow[T\to\infty]{}\frac{\mathbb{E}\Big[W_1^k e^{\sam{-} (\theta+\varphi )W_1}\Big]}{\mathbb{E}\Big[W_1^k e^{\sam{-}\theta W_1}\Big]}=\left(\frac{1 +\theta}{1+\theta+\varphi}\right)^{k+1},
\end{split}
\]
where the right-hand side of the previous asymptotic is nothing but the Laplace exponent of a Gamma random variable with parameters $k+1$ and $1+\theta$, here denoted by $\Gamma_{k+1, 1+\theta}$.
In other words when $\alpha=2$, $\overline{F}(T)Z_T$, under $\mathbb{Q}^{(k)}_{\theta\overline{F}(T), T}$, tends to   $\Gamma_{k+1, 1+\theta}$, under a new probability measure that we denote as $\mathbb{Q}^{(k)}_{\theta, \infty}$. That is to say, the sum of $k+1$ independent $\mathbf{e}_{1+\theta}$, exponential random variables with parameter $1+\theta$. In particular, the mass coming off a single  spine branch is distributed as  $\Gamma_{2, 1+\theta}$.

\subsection{ Large offspring.}

\begin{lemma} \label{lem:palm}
If $L$ is an offspring variable satisfying \textbf{(H1)}. Then as $\lambda \to 0$ we have
\begin{equation} \label{tomate1}
\mathbb{E}[Le^{-\lambda L} ] \sim 1.
\end{equation} 
More generally, for either $g = 2,\alpha=2$, or for any $g \geq 2$ and $\alpha \in (1,2]$, we have 
\begin{equation}\label{tomate2}
\mathbb{E}[L^{(g)} e^{-\lambda L}] \sim \frac{\Gamma(g-\alpha)}{\Gamma(-\alpha)} \lambda^{\alpha-g} \ell\left( \frac{1}{\lambda} \right),
\end{equation}
as $\lambda \to 0$.
\end{lemma}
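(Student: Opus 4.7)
My plan is to pass to generating functions via the identity $\mathbb{E}[L^{(g)} e^{-\lambda L}] = e^{-g\lambda} f^{(g)}(e^{-\lambda})$ (obtained by $g$-fold differentiation of $f(s)=\sum_n p_n s^n$) and then extract the Laplace asymptotic using the Karamata-Tauberian machinery, feeding in the tail asymptotic $\overline{p}_n \sim Q_\alpha n^{-\alpha}\ell(n)$ with $Q_\alpha = (\alpha-1)/\Gamma(2-\alpha)$ already derived earlier in this section. The case $g=1$ is immediate: criticality gives $f^\prime(1) = 1$, so by dominated convergence $\mathbb{E}[L e^{-\lambda L}] = e^{-\lambda} f^\prime(e^{-\lambda}) \to 1$ as $\lambda \to 0$.

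For $g \geq 2$, to avoid differentiating the slowly-varying factor $\ell$ directly, I would work with the coefficient expansion $f^{(g)}(s) = \sum_{n \geq g} n^{(g)} p_n s^{n-g}$ and apply the Karamata-Tauberian theorem to the non-negative sequence $a_n := n^{(g)} p_n$. Abel summation with $p_n = \overline{p}_{n-1} - \overline{p}_n$ together with the identity $(m+1)^{(g)} - m^{(g)} = g\, m^{(g-1)}$ gives
\begin{equation*}
S_N := \sum_{n=g}^N a_n = g!\,\overline{p}_{g-1} + g \sum_{m=g}^{N-1} m^{(g-1)} \overline{p}_m - N^{(g)} \overline{p}_N.
\end{equation*}
For $g > \alpha$ (which is automatic when $\alpha \in (1,2)$ and $g \geq 2$, and also when $\alpha = 2$ and $g \geq 3$), substituting $\overline{p}_n \sim Q_\alpha n^{-\alpha}\ell(n)$ and invoking Karamata's theorem on integrals of regularly varying functions yields $S_N \sim \tfrac{\alpha Q_\alpha}{g-\alpha} N^{g-\alpha}\ell(N)$, since the two nontrivial contributions combine as $\tfrac{g}{g-\alpha} - 1 = \tfrac{\alpha}{g-\alpha}$. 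The Karamata Tauberian theorem for non-negative coefficients then produces
\begin{equation*}
\mathbb{E}[L^{(g)} e^{-\lambda L}] \sim \frac{\alpha Q_\alpha}{g-\alpha}\,\Gamma(g-\alpha+1)\,\lambda^{\alpha-g}\ell(1/\lambda) = \alpha Q_\alpha \,\Gamma(g-\alpha)\,\lambda^{\alpha-g}\ell(1/\lambda).
\end{equation*}
Finally, two applications of $\Gamma(z+1)=z\Gamma(z)$ give $\Gamma(2-\alpha) = \alpha(\alpha-1)\Gamma(-\alpha)$, which simplifies $\alpha Q_\alpha$ to $1/\Gamma(-\alpha)$, matching the claimed constant.

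The main obstacle is the borderline case $\alpha = 2, g = 2$, where $g-\alpha = 0$, the summation-by-parts estimate degenerates (the contributions from $\sum m^{(g-1)} \overline{p}_m$ and $N^{(g)} \overline{p}_N$ are of the same order with exact cancellation of leading terms), and both $\Gamma(g-\alpha)$ and $\Gamma(-\alpha)$ become singular. For this case I would work directly from the identity $(f(s)-s)/(1-s)^2 = \sum_k \overline{\overline{p}}_k s^k$ recorded at the start of this section: at $\alpha = 2$ this simplifies to $\sum_k \overline{\overline{p}}_k s^k = \ell(1/(1-s))$. Differentiating $f(s) = s + (1-s)^2 \sum_k \overline{\overline{p}}_k s^k$ twice and substituting $s = e^{-\lambda}$, the leading contribution to $s^2 f''(s) = \mathbb{E}[L(L-1)e^{-\lambda L}]$ is $2 \sum_k \overline{\overline{p}}_k e^{-\lambda k} \sim 2\ell(1/\lambda)$, while the cross-terms involving derivatives of $\sum_k \overline{\overline{p}}_k s^k$ are $o(\ell(1/\lambda))$ by the standard bound $x \ell'(x) = o(\ell(x))$ (obtained from Karamata's representation theorem, possibly after smoothing $\ell$). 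The resulting constant $2$ agrees with $\lim_{\alpha \uparrow 2} \Gamma(g-\alpha)/\Gamma(-\alpha)$ as computed from the residues of $\Gamma$ at $0$ and $-2$, reconciling the two parts of the statement.
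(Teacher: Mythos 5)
Your route for $g=1$ matches the paper (dominated/monotone convergence from criticality). For $g \ge 2$ your route is genuinely different: the paper differentiates the exact representation $f(s)-s=(1-s)^{\alpha}\ell(1/(1-s))$ directly, using the monotone density theorem on the (ultimately monotone) functions $1-f', f'', f^{(3)},\dots$ to propagate the asymptotic through $g$ derivatives; you instead pass to the coefficient sequence $a_n=n^{(g)}p_n$, perform Abel summation to express $\sum_{n\le N}a_n$ in terms of the tail $\overline{p}_n$, apply Karamata's integral theorem to get $S_N\sim \frac{\alpha Q_\alpha}{g-\alpha}N^{g-\alpha}\ell(N)$, and then the power-series Tauberian theorem. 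For $\alpha\in(1,2)$ and $g\ge 2$ your argument is valid and the constant bookkeeping is right: $\alpha Q_\alpha \Gamma(g-\alpha)=\Gamma(g-\alpha)/\Gamma(-\alpha)$. The trade-off is that your route needs the tail asymptotic $\overline{p}_n\sim Q_\alpha n^{-\alpha}\ell(n)$ (itself a Tauberian-plus-MDT consequence of (H1)), whereas the paper goes straight from (H1) to $f^{(g)}$.

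There is a real gap in your treatment of the boundary case $g=\alpha=2$. You write $f''(s)=2B(s)-4(1-s)B'(s)+(1-s)^2B''(s)$ with $B(s)=\sum_k\overline{\overline{p}}_ks^k=\ell(1/(1-s))$ and dismiss the last two terms as $o(\ell(1/(1-s)))$ by invoking "$x\ell'(x)=o(\ell(x))$, possibly after smoothing $\ell$". But this bound characterizes the normalized (Zygmund) subclass of slowly varying functions and does not hold for an arbitrary slowly varying $\ell$; and $\ell$ here is not free to be smoothed — it is pinned down exactly by $\ell(u)=B(1-1/u)$, since (H1) is an identity, not an asymptotic. If you replace $\ell$ by a smooth Zygmund-class equivalent $\tilde\ell$, you only retain $f(s)-s\sim(1-s)^2\tilde\ell(1/(1-s))$, and passing such an asymptotic through two derivatives is precisely the content of the monotone density theorem — you cannot get it from the chain rule. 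The repair is the paper's argument: $1-f'(s)=\int_s^1 f''$ and $f(s)-s=\int_s^1(1-f')$, with $1-f'$ and $f''$ ultimately monotone, so two applications of the monotone density theorem (at indices $\alpha=2$ and $\alpha-1=1$, both nonzero) give $f''(s)\sim 2\ell(1/(1-s))$. Separately, note that your parenthetical claim that the $g>\alpha$ argument "also" covers $\alpha=2,g\ge 3$ should be dropped: the asserted constant $\Gamma(g-\alpha)/\Gamma(-\alpha)$ vanishes there (pole of $\Gamma$ at $-2$), and the ingredient $\overline{p}_n\sim Q_2 n^{-2}\ell(n)$ you would feed in is not actually valid at $\alpha=2$ (the monotone density theorem breaks down at index $0$ in its derivation, as the paper's own example \eqref{a2infinite} shows). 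These cases are not in the lemma's intended scope — the "$\alpha\in(1,2]$" in the statement is really "$\alpha\in(1,2)$" for $g\ge 3$, as the proof and Lemma \ref{lem:cebolla} make clear.
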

\begin{proof}
The equation \eqref{tomate1} is a consequence of the fact that $\mathbb{E}[L] =1$ and the monotone convergence theorem.

As for \eqref{tomate2}, the assumption \textbf{(H1)} states that $f(s) = \mathbb{E}[s^L]$ takes the form $f(s) = s + (1-s)^{\alpha} \ell \left( \frac{1}{1-s} \right)$ as $s \to 1$, where $\ell(\cdot)$ is slowly varying at $\infty$. We note that every derivative of $f$ is monotone decreasing, and accordingly, by the monotone density theorem, whenever either $g = 2,\alpha=2$ or $g \geq 2, \alpha \in (1,2)$ we have 
\begin{align*}
f^{(g)}(s) := \mathbb{E}[L^{(g)}s^{L-g}] \sim (-1)^g \alpha(\alpha-1)\ldots(\alpha-g+1)(1-s)^{\alpha-g}\ell \left( \frac{1}{1-s} \right),
\end{align*}
as $s \to 1$. Now set $s = e^{-\lambda}$ and use the definition of the Gamma function.
\end{proof}

The previous results implies the following useful Lemma.
\begin{lemma} \label{lem:cebolla}
For any $\alpha \in (1,2]$, as $T \to \infty$ we have
\begin{align} \label{cebolla1}
\mathbb{E}&\left[L \left(\mathbb{E}\left[e^{-\theta \overline{F}(T)Z_{T(1-\rho)}}\right]\right)^{L-1}\right] \sim 1 ,
\end{align}
and for either $g = 2,\alpha=2$, or for any $g \geq 2$ and $\alpha \in (1,2)$,
we have 
\begin{equation}\label{cebolla}
\begin{split}
\mathbb{E}&\left[L^{(g)}\left(\mathbb{E}\left[e^{-\theta \overline{F}(T)Z_{T(1-\rho)}}\right]\right)^{L-g}\right]\sim\frac{\Gamma(g-\alpha)}{\Gamma(-\alpha)}\left( \frac{\overline{F}(T)}{ (1 - \rho + \theta^{-(\alpha-1)} )^{\frac{1}{\alpha-1}} } \right)^{\alpha-g}\ell_2(T)^{(\alpha-1)},
\end{split}
\end{equation}
as $T \to \infty$.
\end{lemma}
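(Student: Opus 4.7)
The strategy is to reduce both asymptotics to Lemma~\ref{lem:palm} by setting $s_T := \mathbb{E}[e^{-\theta\overline{F}(T)Z_{T(1-\rho)}}]$ and $\lambda_T := -\log s_T$, and noting that $\overline{F}(T)\to 0$ (by \eqref{eq:tchaik}) forces $s_T\to 1^-$ and $\lambda_T\to 0^+$ with $\lambda_T \sim 1-s_T$.

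For \eqref{cebolla1} I would just observe $\mathbb{E}[L s_T^{L-1}] = f'(s_T)$, where $f'$ is the derivative of the offspring PGF. By monotone convergence (or \eqref{tomate1} plus $s_T^{-1}\to 1$), $f'(s_T)\to f'(1)=1$, giving the claim.

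For \eqref{cebolla} I would write $\mathbb{E}[L^{(g)}s_T^{L-g}] = s_T^{-g}\,\mathbb{E}[L^{(g)} e^{-\lambda_T L}]$, with $s_T^{-g}\to 1$. Applying \eqref{tomate2} yields
\[
\mathbb{E}[L^{(g)}e^{-\lambda_T L}] \sim \frac{\Gamma(g-\alpha)}{\Gamma(-\alpha)}\,\lambda_T^{\alpha-g}\,\ell(1/\lambda_T).
\]
The asymptotic size of $\lambda_T$ follows directly from \eqref{eq:plantain}:
\[
\lambda_T \sim 1-s_T \sim \frac{\overline{F}(T)}{(1-\rho+\theta^{-(\alpha-1)})^{1/(\alpha-1)}},
\]
so the power factor $\lambda_T^{\alpha-g}$ already produces the first two factors of the right-hand side of \eqref{cebolla}. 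It remains to identify $\ell(1/\lambda_T)$ with $\ell_2(T)^{\alpha-1}$; this is the one step that requires real work.

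The main obstacle is thus the slow-variation identity $\ell(1/\overline{F}(T))\sim \ell_2(T)^{\alpha-1}$ (the constant in $\lambda_T$ versus $\overline{F}(T)$ disappears by slow variation of $\ell$). Here I would argue as follows. Using \eqref{eq:tchaik}, $1/\overline{F}(T)\sim c\, T^{1/(\alpha-1)}\,\ell_2(T)$ for a constant $c$, and by \eqref{inversel2}, $\ell_2(T)\sim \ell^{\sharp}(y)$ where $y:=T^{1/(\alpha-1)}$. Slow variation then gives $\ell(1/\overline{F}(T))\sim \ell\bigl(y\,\ell^{\sharp}(y)\bigr)$. Since $\ell=1/\ell_1=(\ell^*)^{-(\alpha-1)}$ by \eqref{eq:ell1} and the definition $\ell^*=\ell_1^{1/(\alpha-1)}$, and since the De~Bruijn conjugate relation \eqref{inversel} gives $\ell^*(y\,\ell^\sharp(y))\sim 1/\ell^\sharp(y)$, we conclude
\[
\ell(y\,\ell^{\sharp}(y)) = \ell^*(y\,\ell^{\sharp}(y))^{-(\alpha-1)} \sim \ell^{\sharp}(y)^{\alpha-1} \sim \ell_2(T)^{\alpha-1}.
\]
Assembling the three factors $s_T^{-g}\to 1$, $\lambda_T^{\alpha-g}$, and $\ell(1/\lambda_T)\sim \ell_2(T)^{\alpha-1}$ in the asymptotic for $\mathbb{E}[L^{(g)}e^{-\lambda_T L}]$ then matches the right-hand side of \eqref{cebolla} exactly, completing the proof.
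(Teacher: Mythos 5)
Your proposal is correct and follows essentially the same route as the paper: both reduce to Lemma~\ref{lem:palm} via the asymptotic $1-s_T\sim\lambda_T$ from \eqref{eq:plantain} and then identify $\ell(1/\lambda_T)\sim\ell_2(T)^{\alpha-1}$ through \eqref{eq:ell1}, \eqref{eq:tchaik}, \eqref{inversel}, \eqref{inversel2}. Your De~Bruijn-conjugate chain is precisely the ``brief calculation'' the paper leaves implicit, and your bookkeeping of the harmless factor $s_T^{-g}\to1$ is a tidy extra detail.
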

\begin{proof} 
Recall that by \eqref{eq:plantain} $1- \mathbb{E}\left[e^{-\theta \overline{F}(T)Z_{T(1-\rho)}}\right] \sim \lambda_T$ where
\begin{align*}
\lambda_T =\frac{\overline{F}(T)}{ (1 - \rho + \theta^{-(\alpha-1)} )^{\frac{1}{\alpha-1}} } \to 0
\end{align*}
as $T \to \infty$.

The first part \eqref{cebolla1} now follows from \eqref{tomate1}.

As for the latter equation, \eqref{cebolla}, we note first that by \eqref{tomate2} we have  
\begin{align*}
\mathbb{E}&\left[L^{(g)}\left(\mathbb{E}\left[e^{-\theta \overline{F}(T)Z_{T(1-\rho)}}\right]\right)^{L-1}\right]\sim \frac{\Gamma(g-\alpha)}{\Gamma(-\alpha)} \lambda_T^{\alpha-g} \ell\left( \frac{1}{\lambda_T} \right).
\end{align*} 
A brief calculation using \eqref{eq:ell1}, \eqref{eq:tchaik}, \eqref{inversel} and \eqref{inversel2} tells us that $\ell(1/\lambda_T) \sim \ell_2(T)^{(\alpha-1)}$ as $T \to \infty$. The result follows. 
\end{proof}

The next result tells us that the asymptotic law of the offspring size at a size $g$ splitting event under $\mathbb{Q}_{\theta \overline{F}(T),T}^{(k)}$. 

\begin{lemma} \label{lem:splitsize}
Let $\alpha \in (1,2)$ and $g \geq 2$. Let $J$ denote the number of offspring at a splitting event of size $g$ at time $\rho T$. Then 
\begin{align*}
&\mathbb{Q}_{\theta \overline{F}(T),T}^{(k)} [ e^{ - \varphi \overline{F}(T) J  } | \mathcal{G}_T^{(k)}, \text{split of size $g$ at time $\rho T$}] \sim \left( 1 + \varphi (1 - \rho + \theta^{1-\alpha})^{\frac{1}{\alpha-1}} \right)^{-(g-\alpha)}.
\end{align*}
\end{lemma}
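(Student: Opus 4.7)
The plan is to work directly from Remark \ref{rem:cris}, which tells us that conditional on a splitting event of size $g$ at time $\rho T$ occurring under $\mathbb{Q}^{(k)}_{\theta\overline{F}(T),T}$, the number of offspring $J$ at that event is distributed according to the probability mass function
\[
\mathbb{P}(J = \ell) = \frac{\ell^{(g)} A_T^{\ell-g} p_\ell}{\mathbb{E}[L^{(g)} A_T^{L-g}]}, \qquad \text{where } A_T := \mathbb{E}\!\left[e^{-\theta\overline{F}(T) Z_{T(1-\rho)}}\right].
\]
So my first step is simply to write the conditional Laplace transform as
\[
\mathbb{Q}^{(k)}_{\theta\overline{F}(T),T}\!\left[e^{-\varphi\overline{F}(T) J}\,\big|\,\mathcal{G}_T^{(k)},\text{split of size $g$ at $\rho T$}\right]
= \frac{e^{-\varphi\overline{F}(T) g}\,\mathbb{E}\!\left[L^{(g)} \tilde{A}_T^{L-g}\right]}{\mathbb{E}\!\left[L^{(g)} A_T^{L-g}\right]},
\]
where $\tilde{A}_T := e^{-\varphi\overline{F}(T)} A_T$, by absorbing the $e^{-\varphi\overline{F}(T)\ell}$ factor into the base of the exponent.

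Next I would analyze the small-$(1-s)$ behaviour of the two base quantities $A_T$ and $\tilde{A}_T$. The denominator is already handled by Lemma \ref{lem:cebolla}, which gives
\[
\mathbb{E}\!\left[L^{(g)} A_T^{L-g}\right] \sim \frac{\Gamma(g-\alpha)}{\Gamma(-\alpha)} \left(\frac{\overline{F}(T)}{(1-\rho+\theta^{-(\alpha-1)})^{1/(\alpha-1)}}\right)^{\alpha-g} \ell_2(T)^{\alpha-1}.
\]
For the numerator, I use \eqref{eq:plantain} together with $1 - e^{-\varphi\overline{F}(T)} \sim \varphi\overline{F}(T)$ to get
\[
1 - \tilde{A}_T \;\sim\; \overline{F}(T)\left[\frac{1}{(1-\rho+\theta^{-(\alpha-1)})^{1/(\alpha-1)}} + \varphi\right],
\]
and then re-run the same proof as Lemma \ref{lem:cebolla} (which is just Lemma \ref{lem:palm} applied to $\lambda = -\log \tilde{A}_T \sim 1-\tilde{A}_T \to 0$, combined with slow variation of $\ell$ to identify $\ell(1/(1-\tilde{A}_T)) \sim \ell_2(T)^{\alpha-1}$) to obtain
\[
\mathbb{E}\!\left[L^{(g)} \tilde{A}_T^{L-g}\right] \sim \frac{\Gamma(g-\alpha)}{\Gamma(-\alpha)} \overline{F}(T)^{\alpha-g}\left[\frac{1}{(1-\rho+\theta^{-(\alpha-1)})^{1/(\alpha-1)}} + \varphi\right]^{\alpha-g}\!\!\ell_2(T)^{\alpha-1}.
\]

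Finally I divide: the prefactors $\Gamma(g-\alpha)/\Gamma(-\alpha)$, the powers of $\overline{F}(T)$, and the slowly varying factor $\ell_2(T)^{\alpha-1}$ all cancel, leaving
\[
\frac{\left[(1-\rho+\theta^{-(\alpha-1)})^{-1/(\alpha-1)} + \varphi\right]^{\alpha-g}}{\left[(1-\rho+\theta^{-(\alpha-1)})^{-1/(\alpha-1)}\right]^{\alpha-g}} = \left[1 + \varphi(1-\rho+\theta^{1-\alpha})^{1/(\alpha-1)}\right]^{-(g-\alpha)},
\]
using $\theta^{-(\alpha-1)} = \theta^{1-\alpha}$. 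The factor $e^{-\varphi\overline{F}(T)g}$ tends to $1$ and is absorbed in the asymptotic. The only real subtlety is verifying that the slowly varying piece $\ell(1/(1-\tilde{A}_T))$ has the same asymptotic as $\ell(1/(1-A_T))$, which follows immediately from slow variation since the two arguments differ by a bounded multiplicative factor; the rest is bookkeeping already done in Lemmas \ref{lem:palm} and \ref{lem:cebolla}.
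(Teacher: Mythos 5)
Your proof is correct and follows essentially the same route as the paper: both start from Remark \ref{rem:cris} to write the conditional Laplace transform of $J$ as the ratio
$e^{-\varphi\overline{F}(T)g}\,\mathbb{E}[L^{(g)}\tilde A_T^{L-g}]/\mathbb{E}[L^{(g)}A_T^{L-g}]$ with $\tilde A_T=A_T e^{-\varphi\overline{F}(T)}$, then apply Lemma \ref{lem:palm} (equivalently re-run Lemma \ref{lem:cebolla}) to numerator and denominator separately and cancel the slowly varying pieces by slow variation, since $1-\tilde A_T$ and $1-A_T$ differ only by a bounded multiplicative factor. The one cosmetic difference is that the paper writes the reduction via $\lambda_T$ and $\lambda_T+\varphi\overline{F}(T)$ in the exponent, whereas you phrase it through $1-A_T$ and $1-\tilde A_T$; these are asymptotically equivalent, so the two write-ups are the same argument.
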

\begin{proof}
According to remark \ref{rem:cris}, given that a splitting event of size $g$ occurs at time $t$, under $\mathbb{Q}_{\theta,T}^{(k)}$, the conditional distribution of the size of offspring event is given by $\ell^{(g)} \mathbb{E}[ e^{ - \theta Z_{T-t} } ]^{\ell-g}p_\ell / \mathbb{E}[ L^{(g)} \mathbb{E}[ e^{ - \theta Z_{T-t} } ]^{L-g} ]$. 

In particular, setting $t = \rho T$ we have
\begin{align*}
\mathbb{Q}_{\theta \overline{F}(T),T}^{(k)} [ e^{ - \varphi \overline{F}(T) J } | \mathcal{G}_T^{(k)}, \text{split of size $g$ at time $\rho T$}] &\sim \frac{  \mathbb{E}[ L^{(g)} \mathbb{E}[ e^{ -  \theta \overline{F}(T) Z_{T(1-\rho)} } ]^{L-g} e^{-\varphi \overline{F}(T) Z_{T(1-\rho)} } ] }{  \mathbb{E}[ L^{(g)} \mathbb{E}[ e^{ - \theta \overline{F}(T)  Z_{T(1-\rho)} } ]^{L-g} ]}.
\end{align*}
Now with $\lambda_T$ as in the proof of Lemma \ref{lem:cebolla}, we have $\mathbb{E}[ e^{ -  \theta \overline{F}(T) Z_{T(1-\rho)} }] = e^{ - (1 + o(1)) \lambda_T }$, so that 
\begin{align*}
\mathbb{Q}_{\theta \overline{F}(T),T}^{(k)} [ e^{ - \varphi \overline{F}(T) J } | \mathcal{G}_T^{(k)}, \text{split of size $g$ at time $\rho T$}]\sim \frac{  \mathbb{E}[ L^{(g)} e^{ - (  \lambda_T L + \varphi \overline{F}(T) ) L } ] }{  \mathbb{E}[ L^{(g)}e^{ - \lambda_T L }  ]}.
\end{align*}
Now using \eqref{tomate2} we have
\begin{align*}
\mathbb{Q}_{\theta \overline{F}(T),T}^{(k)} [ e^{ - \varphi \overline{F}(T) J } | \mathcal{G}_T^{(k)}, \text{split of size $g$ at time $\rho T$}]&\sim \left( \frac{\lambda_T}{\lambda_T+ \varphi \overline{F}(T) } \right)^{g-\alpha}\\
 &\sim \left( 1 + \varphi (1 - \rho + \theta^{1-\alpha})^{\frac{1}{\alpha-1}} \right)^{-(g-\alpha)}.
\end{align*}
where the final display follows from using the definition of $\lambda_T$ in the proof of the previous lemma. That completes the proof.
\end{proof}

We note that the previous lemma states that $\overline{F}(T)J$ is asymptotically distributed like a Gamma random variable with shape parameter $g-\alpha$ and scale parameter $(1-\rho+\theta^{1-\alpha})^{\frac{1}{\alpha-1}}$. More explicitly, we have the following immediate corollary.

\begin{corollary} \label{cor:splitsize}
Let $\alpha \in (1,2)$ and $g \geq 2$. Let $J_j$ denote the number of offspring at a splitting event of size $g_j$ at time $\rho_j T$. Then  
\begin{align*}
&\mathbb{Q}_{\theta \overline{F}(T),T}^{(k)} ( \overline{F}_T J_j \in \mathrm{d}x  | \mathcal{G}_T^{(k)}, \text{split of size $g_j$ at time $\rho_j T$}) = \Delta^\theta_{g_j,\rho_j} (x) \mathrm{d}x,
\end{align*}
where
\begin{align} \label{eq:Delta}
\Delta^\theta_{g,\rho}(x) := \Delta^\theta_{g,\rho,\alpha,\theta}(x) := \frac{ x^{g-\alpha-1} }{ \Gamma(g-\alpha) (1 - \rho+\theta^{1-\alpha})^{\frac{g-\alpha}{\alpha-1}}}  \exp \left\{  - \frac{x}{(1-\rho+\theta^{1-\alpha})^{\frac{1}{\alpha-1}} } \right\}
\end{align}
is the density function of $(1-\rho+\theta^{1-\alpha})^{\frac{1}{\alpha-1}} $ times a Gamma random variable with parameter $g-\alpha$.
\end{corollary}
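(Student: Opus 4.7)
The corollary is essentially an immediate re-packaging of Lemma \ref{lem:splitsize}: the Laplace transform identified there is recognisable as that of a Gamma distribution, and one only needs to invert it and appeal to the continuity theorem. So my plan is as follows.

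First, I would restate what Lemma \ref{lem:splitsize} provides: conditional on the event $\mathcal{G}_T^{(k)}$ together with a splitting event of size $g_j$ at time $\rho_j T$, the Laplace transform of $\overline{F}(T) J_j$ converges, as $T \to \infty$, to
\[
\psi(\varphi) := \left( 1 + \varphi\, s \right)^{-(g_j - \alpha)}, \qquad \text{where} \quad s := (1-\rho_j + \theta^{1-\alpha})^{\frac{1}{\alpha-1}}.
\]
The key observation is that $\psi$ is precisely the Laplace transform of a Gamma random variable with shape parameter $a := g_j - \alpha$ and rate parameter $\lambda := 1/s$. Indeed, a Gamma$(a,\lambda)$ variable has Laplace transform $(\lambda/(\lambda+\varphi))^a = (1+\varphi/\lambda)^{-a}$.

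Second, I would invert this by writing down the Gamma density directly. For $x > 0$, the density of the Gamma$(a,\lambda)$ law is $\lambda^a x^{a-1} e^{-\lambda x}/\Gamma(a)$, and substituting $a = g_j - \alpha$ and $\lambda = (1-\rho_j + \theta^{1-\alpha})^{-\frac{1}{\alpha-1}}$ yields exactly the expression $\Delta^\theta_{g_j,\rho_j}(x)$ in \eqref{eq:Delta}. This is a routine verification.

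Finally, by the continuity theorem for Laplace transforms (or equivalently L\'evy's continuity theorem applied after the change of variable $\varphi \mapsto i\varphi$), pointwise convergence of the Laplace transforms on $[0,\infty)$ to the Laplace transform of a probability distribution implies convergence in distribution. Since the limiting law is absolutely continuous with density $\Delta^\theta_{g_j,\rho_j}$, the stated asymptotic for the conditional law of $\overline{F}(T) J_j$ follows. There is no real obstacle here; the only thing to be careful about is the interpretation of the ``$=$'' in the statement as describing the asymptotic (limiting) conditional density, consistent with the ``$\sim$'' appearing in Lemma \ref{lem:splitsize} from which it is derived.
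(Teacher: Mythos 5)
Your proposal is correct and takes essentially the same route as the paper, which remarks that the corollary is ``immediate'' from Lemma \ref{lem:splitsize} because the limiting Laplace transform is recognised as that of a Gamma random variable with shape $g-\alpha$ and scale $(1-\rho+\theta^{1-\alpha})^{1/(\alpha-1)}$; your explicit identification of the shape/rate parameters and the appeal to the continuity theorem for Laplace transforms are the same argument spelled out.
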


Our final result tells us about the asymptotic distribution of the number of children descended from non-spine particles born at a splitting event. Recall from Section \ref{sec:lineage} that $\tilde{Z}_{T,j}$ counts the number of particles who are descended from non-spine particles born at the $j^{\text{th}}$ splitting event $\tau_j$.

\begin{lemma} \label{lem:spliteffect}
Let $\alpha \in (1,2]$. Then under $\mathbb{Q}_{\theta \overline{F}(T),T}^{(k)}$, the asymptotic distribution of $\tilde{Z}_{T,j}$ conditional on $\overline{F}(T)J_j = x_j$ is given by  
\begin{align*}
&\mathbb{Q}_{\theta \overline{F}(T),T}^{(k)} [ e^{ - \varphi \overline{F}(T) \tilde{Z}_{T,j} } | \mathcal{G}_T^{(k)}, \text{split of size $g$ at time $\rho T$}, \overline{F}(T)J_j = x_j ]\\
&\sim \exp \left\{ - \left( \frac{1}{(1 - \rho_j + (\varphi+\theta)^{1-\alpha} )^{\frac{1}{\alpha-1}} } -   \frac{1}{(1 - \rho_j + \theta^{1-\alpha} )^{\frac{1}{\alpha-1}} }  \right) x_j \right\},
\end{align*}
regardless of the size $g$ of the split. 
\end{lemma}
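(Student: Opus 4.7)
The plan is to condition on the number $J_j$ of offspring at the splitting event $\tau_j = \rho_j T$ and use Lemma \ref{lem:nonspine}. That lemma tells us that under $\mathbb{Q}_{\theta \overline{F}(T),T}^{(k)}$ the non-spine particles behave independently of each other and of the rest of the process, each evolving as the initial ancestor under an independent copy of $\mathbb{P}_{\theta \overline{F}(T), T(1-\rho_j)}$. Consequently, given the splitting event of size $g$ with $J_j$ offspring, exactly $J_j - g$ of them are non-spine, and $\tilde{Z}_{T,j}$ is the sum of $J_j - g$ i.i.d.\ copies of $Z_{T(1-\rho_j)}$ sampled under $\mathbb{P}_{\theta \overline{F}(T), T(1-\rho_j)}$.

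Next I would compute the Laplace transform of a single such subtree at parameter $\varphi \overline{F}(T)$. By \eqref{nonspinepop} (with $\theta$ replaced by $\theta \overline{F}(T)$), this equals
\begin{align*}
\Phi_T(\varphi;\theta) \; := \; \frac{\mathbb{E}\!\left[e^{-(\varphi+\theta)\overline{F}(T) Z_{T(1-\rho_j)}}\right]}{\mathbb{E}\!\left[e^{-\theta \overline{F}(T) Z_{T(1-\rho_j)}}\right]}.
\end{align*}
Writing $A_u := (1-\rho_j + u^{1-\alpha})^{-1/(\alpha-1)}$ and applying the asymptotic \eqref{eq:plantain} to both numerator and denominator, I would obtain
\begin{align*}
\Phi_T(\varphi;\theta) \; = \; 1 - \overline{F}(T)\bigl(A_{\varphi+\theta} - A_{\theta}\bigr) + o\!\left(\overline{F}(T)\right) \qquad \text{as } T \to \infty.
\end{align*}

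The final step is to use the i.i.d.\ structure: conditionally on $\overline{F}(T) J_j = x_j$,
\begin{align*}
\mathbb{Q}_{\theta \overline{F}(T),T}^{(k)}\!\left[ e^{-\varphi \overline{F}(T) \tilde{Z}_{T,j}} \,\big|\, \mathcal{G}_T^{(k)},\, \text{split of size } g \text{ at time } \rho_j T,\, \overline{F}(T) J_j = x_j \right] \; = \; \Phi_T(\varphi;\theta)^{J_j - g}.
\end{align*}
Taking logarithms and using $(J_j - g)\overline{F}(T) \to x_j$ (since $g$ is a fixed integer while $\overline{F}(T) \to 0$, which is exactly why the limit is independent of $g$), the right-hand side is asymptotic to $\exp\bigl\{-x_j(A_{\varphi+\theta} - A_{\theta})\bigr\}$, matching the claimed expression.

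The only genuine technical point to verify is that the error term $o(\overline{F}(T))$ in the expansion of $\Phi_T$ remains harmless once raised to the power $J_j - g = O(1/\overline{F}(T))$. I would handle this by a second-order Taylor expansion of $\log \Phi_T$: since $\Phi_T \to 1$ and the second derivative of $\log$ is bounded near $1$, the remainder is of order $\overline{F}(T)^2$ and vanishes after multiplication by $J_j - g$. All the necessary asymptotics come directly from \eqref{eq:plantain}, so no new input is required beyond what is already developed in the section.
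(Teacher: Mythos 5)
Your proposal is correct and follows essentially the same route as the paper: use Lemma \ref{lem:nonspine} to see that the $J_j - g$ non-spine offspring at $\tau_j$ evolve independently as under $\mathbb{P}_{\theta\overline{F}(T), T(1-\rho_j)}$, compute the single-subtree Laplace transform as a ratio of expectations, raise it to the power $J_j - g \approx x_j/\overline{F}(T)$, and conclude from \eqref{eq:plantain}. Your explicit check that the $o(\overline{F}(T))$ remainder survives exponentiation is a welcome bit of extra care that the paper leaves implicit.
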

\begin{proof}
Recall that particles not carrying spines behave at time $t$ as if under the original measure $\mathbb{P}$ but with discounting by $e^{-\theta \overline{F}(T)Z_{T-t}}$ under $\mathbb{Q}^{(k)}_{\theta\overline{F}(T), T}$. In other words, if $Y_T$ denotes the number of descendants at time $T$ of a non-spine particle $u$ living at time $t$, we have
\begin{align*}
\mathbb{Q}_{\theta \overline{F}(T),T}^{(k)} [e^{- \varphi \overline{F}(T) Y_T } | \mathcal{F}_t^{(k)} ]  = F_T(e^{ - (\theta+ \varphi )\overline{F}(T) })/F_T(e^{-\theta \overline{F}(T)}).
\end{align*}
Moreover $Y_T$ is independent of the remainder of the population. 
In particular, since $\frac{x_j}{\overline{F}(T)} - g$ non-spine particles are born at time $\tau_j$, we have
\begin{align*}
&\mathbb{Q}_{\theta \overline{F}(T),T}^{(k)} [ e^{ - \varphi \overline{F}(T) \tilde{Z}_{T,j} } | \mathcal{G}_T^{(k)}, \text{split of size $g$ at time $\rho T$}, \overline{F}(T)J_j = x_j ] = \left( \frac{  F_T(e^{ - (\theta+ \varphi )\overline{F}(T) })}{ F_T(e^{-\theta \overline{F}(T)})} \right)^{\frac{x_j}{\overline{F}(T)} - g }.
\end{align*}
The result now follows from \eqref{eq:plantain}.
\end{proof}

We remark that in Lemma \ref{lem:spliteffect}, the linear dependence of the Laplace transform on $x$ is due to a branching property.

\section{Inverting the change of measure} \label{sec:inversion}

\subsection{The ancestral tree probability under $\mathbb{Q}_{\theta \overline{F}(T)}^{(k),T}$}

\begin{lemma}
Let $(\beta_0,\ldots,\beta_m)$ be a splitting process of $\{1,\ldots,k\}$. Let $0 < t_1 < \ldots < t_m < 1$. Then 
\begin{align}
& \lim_{T \to \infty} \mathbb{Q}^{(k)}_{\theta \overline{F}(T) , T}\Big(\tau_1/T \in \ud  t_1,\ldots,\tau_m/T \in \ud t_m, \mathcal{T}(\xi) = (\beta_0,\ldots,\beta_m) \Big) \nonumber \\
& =\frac{ \prod_{i=1}^m \ud t_i  }{ \mathbb{E}[W_{\alpha-1}^k e^{ - \theta W_{\alpha-1} } ] } \prod_{i=1}^m \frac{\alpha \Gamma(g_i-\alpha)}{\Gamma(2-\alpha)} (1 - t_i + \theta^{1-\alpha} )^{\frac{g_i-\alpha}{\alpha-1}}  \prod_{i=0}^m \theta^{-\alpha(g_i-1)} (\theta^{1-\alpha} + 1 - t_i )^{- \frac{\alpha}{\alpha-1}(g_i-1)} .
\end{align}
\end{lemma}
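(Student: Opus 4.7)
The plan is to start from the projected spine decomposition formula of Remark \ref{rem:cris2}, namely
\begin{align*}
\mathbb{Q}^{(k)}_{\theta, T}\Big(\tau_1 \in \ud t_1,\ldots,\tau_m \in \ud t_m,\, \mathcal{T}(\xi) = (\beta_0,\ldots,\beta_m) \Big) = \frac{F_T'(e^{-\theta})}{\mathbb{E}[Z_T^{(k)} e^{-\theta Z_T}]} \prod_{i=1}^m \mathbb{E}\!\left[L^{(g_i)} \left(\mathbb{E}[e^{-\theta Z_{T-t_i}}]\right)^{L-g_i}\right] F_{T-t_i}'(e^{-\theta})^{g_i-1} \ud t_i,
\end{align*}
and to insert the rescaling $\theta \mapsto \theta \overline{F}(T)$ and $t_i \mapsto t_i T$. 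The change of variables introduces a Jacobian factor of $T^m$ into the density, so the work reduces to computing the $T\to\infty$ asymptotics of each factor on the right-hand side at this rescaled point, then verifying that the resulting combinations collapse to the stated closed form.

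The three inputs needed have all been established in Section \ref{sec:gen}. First, \eqref{kmomentsrho} at $\rho=0$ gives the denominator
\[
\mathbb{E}\!\left[Z_T^{(k)} e^{-\theta \overline{F}(T) Z_T}\right] \sim \overline{F}(T)^{-(k-1)} \mathbb{E}\!\left[W_{\alpha-1}^k e^{-\theta W_{\alpha-1}}\right].
\]
Second, \eqref{kmomentsrho} with $k=1$ and $\rho = t_i$, combined with the explicit Laplace transform \eqref{eq:firstmoment}, gives
\[
F_{T(1-t_i)}'(e^{-\theta \overline{F}(T)}) \sim (1 + \theta^{\alpha-1}(1-t_i))^{-\alpha/(\alpha-1)} = \theta^{-\alpha}(\theta^{1-\alpha} + 1 - t_i)^{-\alpha/(\alpha-1)},
\]
using the algebraic identity $1+\theta^{\alpha-1}(1-t_i) = \theta^{\alpha-1}(\theta^{1-\alpha}+1-t_i)$; raised to the power $g_i-1$ this produces the second product in the statement (with the $F_T'$ factor in the numerator corresponding to the convention $t_0=0$, $g_0 - 1 = 1$ in the product). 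Third, Lemma \ref{lem:cebolla} gives
\[
\mathbb{E}\!\left[L^{(g_i)}\!\left(\mathbb{E}[e^{-\theta \overline{F}(T) Z_{T(1-t_i)}}]\right)^{L-g_i}\right] \sim \frac{\Gamma(g_i-\alpha)}{\Gamma(-\alpha)} \, \overline{F}(T)^{\alpha-g_i} (1-t_i+\theta^{1-\alpha})^{(g_i-\alpha)/(\alpha-1)} \ell_2(T)^{\alpha-1},
\]
and the Gamma-function identity $\Gamma(2-\alpha) = \alpha(\alpha-1)\Gamma(-\alpha)$ converts the Gamma ratio into the stated $\alpha\Gamma(g_i-\alpha)/\Gamma(2-\alpha)$, producing the first product (up to an overall constant coming from the bookkeeping below).

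The main obstacle is bookkeeping: one must verify that the $T$-dependent factors $T^m$, $\overline{F}(T)^{\bullet}$ and $\ell_2(T)^{\bullet}$ cancel exactly. Collecting powers of $\overline{F}(T)$, the denominator contributes $\overline{F}(T)^{k-1}$ and the $m$ offspring factors contribute $\overline{F}(T)^{\sum_i (\alpha-g_i)} = \overline{F}(T)^{m(\alpha-1) - (k-1)}$ using the identity $\sum_{i=1}^m (g_i - 1) = k-1$; together these produce $\overline{F}(T)^{m(\alpha-1)}$. The slowly varying factors contribute $\ell_2(T)^{m(\alpha-1)}$, and combined with the Jacobian $T^m$ the whole $T$-dependent product is $T^m \overline{F}(T)^{m(\alpha-1)} \ell_2(T)^{m(\alpha-1)}$. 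By \eqref{eq:tchaik}, $\overline{F}(T)\ell_2(T) \sim c_{\alpha,r} T^{-1/(\alpha-1)}$, so this combination converges to a finite nonzero constant, and all $T$-dependence cancels as required.

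The only remaining technical point is to upgrade the pointwise limit of densities to convergence of the corresponding probability law. Since the error terms in the asymptotic estimates of Section \ref{sec:gen} are uniform on compact subsets of the open simplex $\{0 < t_1 < \cdots < t_m < 1\}$ and the limit density integrates (as will be checked when these results are used to identify $\mathbb{P}^{(k,\alpha)}$), standard dominated-convergence arguments give the stated limit.
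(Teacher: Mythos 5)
Your proposal is essentially the paper's own argument: you start from the projected spine-density formula \eqref{eq:Qmainproj}, rescale $\theta\mapsto\theta\overline{F}(T)$ and $t_i\mapsto t_iT$, apply \eqref{kmomentsrho}, \eqref{eq:firstmoment} and Lemma~\ref{lem:cebolla} factor by factor, and then track the powers of $T$, $\overline{F}(T)$ and $\ell_2(T)$ using $\sum_i(g_i-1)=k-1$ and \eqref{eq:tchaik} to see that the $T$-dependence drops out. The one place you are slightly glib is the constant bookkeeping: the $T$-dependent combination converges to $((\alpha-1)r)^{-m}$ rather than to $1$, and this must be absorbed together with the $r^m$ coming from the split rate in \eqref{eq:Qmainproj} (cf.\ the $r\,\ud t$ factor in Lemma~\ref{lem:firstsplit}) and the $(\alpha-1)^m$ released by the identity $\Gamma(2-\alpha)=\alpha(\alpha-1)\Gamma(-\alpha)$; you defer this to an unspecified "overall constant," whereas the paper verifies the cancellation explicitly in \eqref{eq:copper}. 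That is a cosmetic omission, not a gap in the argument.
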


\begin{proof}
According to \eqref{eq:Qmainproj} we have 
\begin{align}
& \lim_{T \to \infty} \mathbb{Q}^{(k)}_{\theta \overline{F}(T) , T}\Big(\tau_1/T \in \ud  t_1,\ldots,\tau_m/T \in \ud T t_m, \mathcal{T}(\xi) = (\beta_0,\ldots,\beta_m) \Big) \nonumber \\
& = \lim_{T \to \infty} \frac{F_T'(e^{-\theta \overline{F}(T) }) }{\mathbb{E}[Z_T^{(k)}e^{ - \theta  \overline{F}(T)   Z_T }]}  \prod_{ i = 1}^m \mathbb{E}\left[ L^{(g_i)} \left(\mathbb{E}[e^{-\theta   \overline{F}(T)  Z_{T(1-t_i)}}]\right)^{L-g_i} \right] p_{\ell_i} F_{T(1-t_i)}'(e^{-\theta})^{g_i-1}. 
\end{align}
Now according to \eqref{kmomentsrho} and then \eqref{eq:firstmoment} we have $F'_{T(1-\rho)}(e^{-\theta \overline{F}(T) }) \sim (1+\theta^{\alpha-1}(1-\rho))^{- \frac{\alpha}{\alpha-1}}$. Using this fact in conjunction with the asymptotics in \eqref{cebolla} and \eqref{kmomentsrho} we obtain
\begin{align} \label{eq:bronze}
& \lim_{T \to \infty} \mathbb{Q}^{(k)}_{\theta \overline{F}(T) , T}\Big(\tau_1/T \in \ud  t_1,\ldots,\tau_m/T \in \ud T t_m, \mathcal{T}(\xi) = (\beta_0,\ldots,\beta_m) \Big) \nonumber \\
& = \lim_{T \to \infty} \prod_{i=1}^m \Big(rT\ud t_i\Big) \frac{  \overline{F}(T)^{(k-1)} }{ \mathbb{E}[W_{\alpha-1}^k e^{ - \theta W_{\alpha-1} } ] } \nonumber  \\
\times &\prod_{i=1}^m \frac{\Gamma(g_i-\alpha)}{\Gamma(-\alpha)} \left( \frac{ \overline{F}(T) }{ (1 - t_i + \theta^{1-\alpha} )^{\frac{1}{\alpha-1}} } \right)^{\alpha-g_i} \ell_2(T)^{(\alpha-1)}  \prod_{i=0}^m (1+\theta^{\alpha-1}(1-\rho))^{- \frac{\alpha}{\alpha-1}(g_i-1)} .
\end{align}
We now gather the asymptotic terms in $T$. Recall from \eqref{eq:tchaik} that $\overline{F}(T) \sim ((\alpha-1)r)^{- \frac{1}{\alpha-1} } T^{-\frac{1}{\alpha-1}} \ell_3(T)$. Using this fact in conjunction with the simple identity $\sum_{i=1}^m (g_i-1)=k-1$ we have 
\begin{align} \label{eq:copper}
T^m \overline{F}(T)^{k-1} \prod_{ i = 1}^m \overline{F}(T)^{\alpha-g_i} \ell_2(T)^{(\alpha-1)} \sim \frac{1}{((\alpha-1)r)^m}
\end{align}
as $T \to \infty$. Plugging \eqref{eq:copper} into \eqref{eq:bronze} and using the identity $(\alpha-1)\Gamma(-\alpha) = \Gamma(2-\alpha)/\alpha$, we obtain 
\begin{align} \label{eq:bronze2}
& \lim_{T \to \infty} \mathbb{Q}^{(k)}_{\theta \overline{F}(T) , T}\Big(\tau_1/T \in \ud  t_1,\ldots,\tau_m/T \in \ud T t_m, \mathcal{T}(\xi) = (\beta_0,\ldots,\beta_m) \Big) \nonumber \\
& =\frac{ \prod_{i=1}^m \ud t_i  }{ \mathbb{E}[W_{\alpha-1}^k e^{ - \theta W_{\alpha-1} } ] } \prod_{i=1}^m \frac{\alpha \Gamma(g_i-\alpha)}{\Gamma(2-\alpha)} (1 - t_i + \theta^{1-\alpha} )^{\frac{g_i-\alpha}{\alpha-1}}  \prod_{i=0}^m \theta^{-\alpha(g_i-1)} (\theta^{1-\alpha} + 1 - t_i )^{- \frac{\alpha}{\alpha-1}(g_i-1)} .
\end{align}
The result follows by aggregating the terms in the product and using the identity $\sum_{i=1}^m (g_i-1) = k-1$. 
\end{proof}

Combining the previous result and Corollary \ref{cor:splitsize}, we obtain the following result. 

\begin{corollary} \label{cor:grapefruit}
We have 
\begin{align}\label{comptree2}
& \lim_{T \to \infty} \mathbb{Q}^{(k)}_{\theta \overline{F}(T) , T}\left(\frac{\tau_1}{T} \in \ud  t_1,\ldots,\frac{\tau_m}{T }\in \ud  t_m, \mathcal{T}(\xi) = (\beta_0,\ldots,\beta_m) , \overline{F}(T) L_{\tau_1} \in \mathrm{d}x_1,\ldots,  \overline{F}(T) L_{\tau_m} \in \mathrm{d}x_m \right) \nonumber \\
&=  \frac{ \theta^{ - \alpha k} (\theta^{1-\alpha} + 1 )^{-\frac{\alpha}{\alpha-1}} }{ \mathbb{E}[W_{\alpha-1}^ke^{-\theta W_{\alpha-1}} ] } \prod_{ i=1}^m \frac{\alpha \Gamma(g_i-\alpha)}{\Gamma(2-\alpha)} (1 - t_i + \theta^{1-\alpha} )^{ - g_i}  \mathrm{d}t_i \prod_{i=1}^m \Delta^\theta_{g_i,t_i}(\mathrm{d}x_i) .
\end{align}
\end{corollary}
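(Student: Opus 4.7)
The plan is exactly as the hint suggests: combine the marginal limit of the split times and topology supplied by the preceding lemma with the conditional limit of the rescaled offspring sizes supplied by Corollary \ref{cor:splitsize}. Since $\mathcal{G}_T^{(k)}$ records the split times, topology and split sizes, Corollary \ref{cor:splitsize} identifies the limiting conditional density $\prod_{i=1}^m \Delta^\theta_{g_i,t_i}(\mathrm{d}x_i)$ of the rescaled offspring sizes given this information. The factorised form of the exact joint density in Lemma \ref{lem:Qmain} makes transparent that the $(L_{\tau_i})_{i=1}^m$ are conditionally independent across splits. Multiplying the conditional density by the marginal density from the preceding lemma therefore produces the claimed joint density.

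The algebraic matching with the target rests on the single identity
\begin{equation*}
\frac{g_i-\alpha}{\alpha-1} - \frac{\alpha(g_i-1)}{\alpha-1} = -g_i,
\end{equation*}
which collapses the two distinct $(1-t_i+\theta^{1-\alpha})$-exponents in the marginal formula into the single factor $(1-t_i+\theta^{1-\alpha})^{-g_i}$ appearing in the target. The remaining $\theta$-prefactors aggregate via $\sum_{i=0}^m \alpha(g_i-1) = \alpha k$, with the $i=0$ index of the marginal formula (which encodes the initial $F_T'(e^{-\theta\overline{F}(T)})$ asymptotic $\sim \theta^{-\alpha}(\theta^{1-\alpha}+1)^{-\alpha/(\alpha-1)}$) simultaneously supplying both the extra $\theta^{-\alpha}$ needed to reach $\theta^{-\alpha k}$ and the extra $(\theta^{1-\alpha}+1)^{-\alpha/(\alpha-1)}$ factor appearing in the target.

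The main obstacle is essentially illusory: a careful reader might worry that the conditional convergence of Corollary \ref{cor:splitsize} and the marginal weak convergence of the preceding lemma do not automatically combine into joint density convergence. This concern is sidestepped by arguing directly from the exact joint formula of Lemma \ref{lem:Qmain}. Rescaling $\theta \mapsto \theta \overline{F}(T)$, $\tau_i \mapsto t_i T$, and $\ell_i \mapsto x_i/\overline{F}(T)$, one takes the limit factor by factor using the survival probability asymptotic \eqref{eq:tchaik}, the factorial moment asymptotics \eqref{kmomentsrho}, and the offspring-event asymptotics of Lemmas \ref{lem:palm}--\ref{lem:cebolla}. The Gamma density $\Delta^\theta_{g_i,t_i}$ then emerges from the $\ell_i^{(g_i)}(\mathbb{E}[e^{-\theta \overline{F}(T) Z_{T(1-t_i)}}])^{\ell_i-g_i} p_{\ell_i}$ factor under the rescaling, exactly as in the proof of Corollary \ref{cor:splitsize}, and the result follows as a one-step asymptotic computation parallel to the derivation of the preceding lemma.
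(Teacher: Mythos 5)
Your proposal is correct and follows the paper's own route: the corollary is obtained by multiplying the marginal limit of the preceding lemma by the conditional density from Corollary \ref{cor:splitsize}, and you have correctly verified the exponent bookkeeping $\tfrac{g_i-\alpha}{\alpha-1}-\tfrac{\alpha(g_i-1)}{\alpha-1}=-g_i$ and $\sum_{i=0}^m\alpha(g_i-1)=\alpha k$ (with $g_0=2$, $t_0=0$) that makes the two formulas match. Your remark that any marginal-versus-conditional gluing concern dissolves by working directly from the exact pre-limit density in Lemma \ref{lem:Qmain} is also sound, and is in effect what the paper's one-line proof is leaning on since both ingredients are themselves consequences of that lemma.
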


\subsection{The conditional distribution of $Z_T$ given ancestral tree under $\mathbb{Q}_{\theta \overline{F}(T)}^{(k),T}$}
In the previous section we computed the $\mathbb{Q}_{\theta,\infty}^{(k)}$ probabilities associated with the limiting ancestral tree. In this section, we compute the conditional Laplace transform for the entire population $Z_T$ at time $T$ \emph{given} a certain ancestral tree with certain offspring sizes at splitting events. 

Let us begin by recalling from Section \ref{sec:lineage} the decomposition 
\begin{align} \label{eq:decomp2}
Z_T = k + \sum_{ j = 1}^k Z_T^{(j)} + \sum_{ j = 1}^m \tilde{Z}_{T,j} 
\end{align}
of the entire population at time $T$ into conditionally independent consistuents. Here $Z_T^{(j)}$ counts the number of individuals born off the spine of lineage $j$, and $\tilde{Z}_{T,j}$ counts the number of descendants of non-spine particles born at the time of the $j^{\text{th}}$ splitting event. Our next two lemmas characterise the conditional laws of $Z_T^{(j)}$ and $\tilde{Z}_{T,j}$ given the ancestral tree, split times, and offspring sizes.

It transpires that asymptotically under $\mathbb{Q}^{(k)}_{\theta\overline{F}(T), T}$, the spine subpopulations have a highly tractable form:

\begin{lemma}
Conditional on $\mathcal{G}_T^{(k)}$ given $\varsigma_j = \rho T$, we have 
\begin{align} \label{eq:lineage}
\lim_{T \to \infty} \mathbb{Q}^{(k),T}_{\theta\overline{F}(T)}  [ e^{ - \varphi \overline{F}(T) Z_T^{(j)} } | \mathcal{G}_T^{(k)}, \varsigma_j = \rho T ] = \left( \frac{ 1 + (1-\rho)\theta^{\alpha-1} }{ 1 + (1-\rho)(\theta+\varphi)^{\alpha-1} }  \right)^{\frac{\alpha}{\alpha-1}},
\end{align}
as $T \to \infty$. 
\end{lemma}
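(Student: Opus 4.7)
The plan is to reduce the conditional Laplace transform of $Z_T^{(j)}$ to that of the off-spine subpopulation along a single-spine branch of length $(1-\rho)T$, a quantity for which explicit asymptotics have already been established earlier in this section.

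The key structural fact (Lemma \ref{lem:bots} and the remark following Lemma \ref{Qconstr}) is that, under $\mathbb{Q}^{(k)}_{\theta \overline{F}(T), T}$, the rate and offspring distribution of births off any spine branch are \emph{independent} of the number of spines travelling along that branch; moreover, by Lemma \ref{lem:nonspine}, non-spine particles thereafter evolve independently as copies of $\mathbb{P}_{\theta \overline{F}(T),\, T-t}$. Since the filtration $\mathcal{G}_T^{(k)}$ records only spine-splitting information (and nothing about off-spine births), it follows that, conditional on $\mathcal{G}_T^{(k)}$ and on $\varsigma_j = \rho T$, the collection of off-spine births along lineage $j$'s path from time $\rho T$ to $T$ --- together with the non-spine subtrees they generate --- has exactly the joint distribution it would have along a single-spine branch of length $(1-\rho)T$ under $\mathbb{Q}^{(1)}_{\theta \overline{F}(T),\,(1-\rho)T}$. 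Applying \eqref{spinepop} at $t = \rho T$ with discount $\theta \overline{F}(T)$ and adjusting by a factor of $e^{\varphi \overline{F}(T)}$ to remove the spine particle itself (which is not counted by $Z_T^{(j)}$), I obtain the exact identity
\begin{align*}
\mathbb{Q}^{(k)}_{\theta \overline{F}(T), T}\!\left[ e^{-\varphi \overline{F}(T) Z_T^{(j)}} \,\Big|\, \mathcal{G}_T^{(k)},\, \varsigma_j = \rho T \right]
= e^{\varphi \overline{F}(T)} \cdot \frac{\mathbb{E}\big[ Z_{(1-\rho)T} \, e^{-(\theta+\varphi)\overline{F}(T) Z_{(1-\rho)T}} \big]}{\mathbb{E}\big[ Z_{(1-\rho)T} \, e^{-\theta \overline{F}(T) Z_{(1-\rho)T}} \big]}.
\end{align*}

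The conclusion is then a passage to the limit. The prefactor $e^{\varphi \overline{F}(T)}$ tends to $1$ since $\overline{F}(T) \to 0$. For the ratio, applying the asymptotic \eqref{kmomentsrho} at $k=1$ together with the explicit Laplace transform \eqref{eq:firstmoment} for $W_{\alpha-1}$, and using the identity $1 + 1/(\alpha-1) = \alpha/(\alpha-1)$, I get
\begin{align*}
\mathbb{E}\big[ Z_{(1-\rho)T} \, e^{-\eta \overline{F}(T) Z_{(1-\rho)T}} \big] \longrightarrow \frac{1}{\big(1 + (1-\rho)\eta^{\alpha-1}\big)^{\alpha/(\alpha-1)}} \qquad \text{as } T \to \infty,
\end{align*}
for each $\eta \geq 0$. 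Setting $\eta = \theta$ and $\eta = \theta + \varphi$ and taking the ratio yields the claimed limit \eqref{eq:lineage}.

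The main obstacle is a careful justification of the conditional-independence claim used above, because lineage $j$'s trajectory may be shared with some later-indexed spines for part of the interval $[\rho T, T]$, so it is not literally a single-spine branch under $\mathbb{Q}^{(k)}_{\theta \overline{F}(T), T}$. The resolution is that $\mathcal{G}_T^{(k)}$ records no information about off-spine births, and that the rate and offspring distribution of off-spine births are invariant in the number of spines on the branch. A fully rigorous version follows by iteratively applying the branching Markov property (Lemma \ref{branlem}) at each spine-split along lineage $j$'s path, using this invariance at each segment to match the off-spine dynamics to those of a true single-spine branch.
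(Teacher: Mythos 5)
Your proof is correct and follows essentially the same route as the paper's: both reduce to the one-spine subpopulation off a branch of length $(1-\rho)T$ using the invariance of off-spine birth rates with respect to the number of spines carried (Lemma \ref{lem:bots}), then express the resulting Laplace transform as the ratio $\mathbb{E}[Z_{T(1-\rho)} e^{-(\theta+\varphi)\overline{F}(T)Z_{T(1-\rho)}}]/\mathbb{E}[Z_{T(1-\rho)} e^{-\theta\overline{F}(T)Z_{T(1-\rho)}}]$ via \eqref{newprob1} at $k=1$, and finally pass to the limit using \eqref{kmomentsrho} together with \eqref{eq:firstmoment}. The only difference is that you explicitly account for the one spine particle not counted in $Z_T^{(j)}$ via the factor $e^{\varphi\overline{F}(T)} \to 1$, a small bookkeeping point that the paper glosses over; your version is in this respect slightly more careful.
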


\begin{proof}
Note that according to Lemma \ref{lem:bots}, the rate of births occuring off the spine is independent of the number of spines follows a particle. 
In particular, it follows that the contribution to births off the spine along a lineage $[\rho T,T]$ is identical to as under the $1$-spine measure run across $[0,(1-\rho)T]$. It follows that 
\begin{align*}
\mathbb{Q}^{(k),T}_{\theta\overline{F}(T)}  [ e^{ - \varphi \overline{F}(T) Z_T^{(j)} } | \mathcal{G}_T^{(k)}, \varsigma_j = \rho T ]  =  \mathbb{Q}^{(1),T(1-\rho)}_{\theta\overline{F}(T)}  [ e^{ - \varphi \overline{F}(T) Z_{T(1-\rho)}  }  ] .
\end{align*}
Now appealing to the case $k=1$ of \eqref{newprob1} we have
\begin{align*}
\mathbb{Q}^{(1),T(1-\rho)}_{\theta\overline{F}(T)}  [ e^{ - \varphi \overline{F}(T) Z_{T(1-\rho)}  }  ]  = \frac{ \mathbb{E}[Z_{T(1-\rho)} e^{ - (\varphi+\theta) \overline{F}(T) Z_{T(1-\rho)} } ] }{  \mathbb{E}[Z_{T(1-\rho)} e^{ - \theta \overline{F}(T) Z_{T(1-\rho)} } ] }  .
\end{align*}
Using \eqref{eq:tchaik} to replace $\overline{F}(T)$ with $\overline{F}(T(1-\rho))$ and using the monotone convergence theorem we have 
\begin{align*}
\lim_{T \to \infty} \mathbb{Q}^{(1),T(1-\rho)}_{\theta\overline{F}(T)}  [ e^{ - \varphi \overline{F}(T) Z_{T(1-\rho)}  }  ] = \lim_{T \to \infty}  \frac{ \mathbb{E}[Z_{T(1-\rho)} e^{ - (\varphi+\theta) (1-\rho)^{\frac{1}{\alpha-1}} \overline{F}(T(1-\rho)) Z_{T(1-\rho)} } ] }{  \mathbb{E}[Z_{T(1-\rho)} e^{ - \theta (1-\rho)^{\frac{1}{\alpha-1}} \overline{F}(T(1-\rho)) Z_{T(1-\rho)} } ] }  .
\end{align*}
Now use \eqref{eq:firstmoment}.
\end{proof}

Let us mention here that a simple calculation using \eqref{eq:lineage} establishes the following corollary in the setting $\alpha=2$:

\begin{corollary}
Let $\alpha = 2$. Then the distribution of $Z_T^{(j)}$ converges in distribution as $T \to \infty$ to the sum of two independent exponential random variables with parameter $\frac{1}{1-\rho} + \theta$. 
\end{corollary}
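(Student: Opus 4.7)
The plan is very short: the corollary is essentially an explicit algebraic check applied to the lemma immediately preceding it, combined with Lévy's continuity theorem.

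First I would substitute $\alpha = 2$ into the identity
\[
\lim_{T \to \infty} \mathbb{Q}^{(k),T}_{\theta\overline{F}(T)}  [ e^{ - \varphi \overline{F}(T) Z_T^{(j)} } | \mathcal{G}_T^{(k)}, \varsigma_j = \rho T ] = \left( \frac{ 1 + (1-\rho)\theta^{\alpha-1} }{ 1 + (1-\rho)(\theta+\varphi)^{\alpha-1} }  \right)^{\frac{\alpha}{\alpha-1}}
\]
supplied by the previous lemma, which reduces the right-hand side to the rational function $\left( (1 + (1-\rho)\theta) / (1 + (1-\rho)(\theta+\varphi)) \right)^2$.

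Next I would recognise this expression as a Laplace transform. Writing $\lambda := (1-\rho)^{-1} + \theta$ and multiplying numerator and denominator inside the parentheses by $(1-\rho)^{-1}$, the limit becomes $\bigl( \lambda / (\lambda + \varphi) \bigr)^2$. This is the Laplace transform of the sum of two independent exponential random variables with common rate parameter $\lambda$, equivalently a Gamma$(2,\lambda)$ random variable.

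Finally, I would invoke Lévy's continuity theorem: since the Laplace transform of $\overline{F}(T) Z_T^{(j)}$ (conditional on $\varsigma_j = \rho T$ and the rest of $\mathcal{G}_T^{(k)}$) converges pointwise to a function which is continuous at $\varphi = 0$ and equals the Laplace transform of a Gamma$(2,\lambda)$ law, the corresponding distributions converge weakly. No genuine obstacle arises: the work is essentially the rearrangement above, and any honest difficulty already lies in the lemma that was used as input. One could add one line noting that the identity in law of a Gamma$(2,\lambda)$ variable with the sum of two independent $\mathrm{Exp}(\lambda)$ variables is standard.
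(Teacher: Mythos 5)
Your argument is correct and is precisely the ``simple calculation using \eqref{eq:lineage}'' that the paper indicates but does not spell out: substituting $\alpha=2$ reduces the conditional Laplace transform of $\overline{F}(T)Z_T^{(j)}$ to $\bigl(\lambda/(\lambda+\varphi)\bigr)^2$ with $\lambda = (1-\rho)^{-1}+\theta$, and the continuity theorem for Laplace transforms then gives weak convergence to $\mathrm{Gamma}(2,\lambda)$, i.e.\ the sum of two independent $\mathrm{Exp}(\lambda)$ variables. One minor remark: the corollary's phrasing ``the distribution of $Z_T^{(j)}$ converges'' is a slight abuse --- as you correctly recognise, the object that converges is $\overline{F}(T)Z_T^{(j)}$.
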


We also recall now from Lemma \ref{lem:spliteffect} that
\begin{align} \label{eq:spliteffect2}
\lim_{T \to \infty} \mathbb{Q}_{\theta \overline{F}(T),T}^{(k)} [ e^{ - \varphi \overline{F}(T) \tilde{Z}_{T,j} } | \mathcal{G}_T^{(k)}, \text{split of size $g$ at time $\rho T$}, \overline{F}(T)J_j = x_j ]= \exp \left\{ - G(\rho_j,\varphi) x_j \right\},
\end{align}
where $G(\rho_j,\varphi) := \left( \frac{1}{(1 - \rho_j + (\varphi+\theta)^{1-\alpha} )^{\frac{1}{\alpha-1}} } -   \frac{1}{(1 - \rho_j + \theta^{1-\alpha} )^{\frac{1}{\alpha-1}} }  \right)  $. The main result of this section is a simple consequence of what we have seen in this section so far, and gives a characterisation of the asymptotic conditional law of $Z_T$ under $\mathbb{Q}_{\theta \overline{F}(T),T}^{(k)}$ after conditioning on the spine ancestral tree and the offspring sizes at split times. 

\begin{lemma} \label{lem:kiwi}
For short write $\Gamma_T := \{ \tau_1/T \in \ud  t_1,\ldots,\tau_m/T \in \ud  t_m, \mathcal{T}(\xi) = (\beta_0,\ldots,\beta_m) , \overline{F}(T) L_{\tau_1} \in \mathrm{d}x_1,\ldots,  \overline{F}(T) L_{\tau_m} \in \mathrm{d}x_m \}$.
Then
\begin{align} \label{eq:kiwi}
\lim_{T \to \infty} \mathbb{Q}_{\theta \overline{F}(T),T}^{(k)} [ e^{ - \varphi\overline{F}(T) Z_T} | \Gamma_T ] = \prod_{j=0}^m  \left( \frac{ 1 + (1-\rho_j)\theta^{\alpha-1} }{ 1 + (1-\rho_j)(\theta+\varphi)^{\alpha-1} }  \right)^{(g_j-1)\frac{\alpha}{\alpha-1}} ~ \exp \left( - \sum_{ j = 1}^m x_j G(\rho_j,\varphi) \right),
\end{align}
where we are using the convention $\rho_0 = 0$, $g_0 = 2$. 
\end{lemma}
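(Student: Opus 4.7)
The plan is to combine the population decomposition from Section~\ref{sec:lineage} with the two asymptotic Laplace transform identities already established for the constituent subpopulations. Starting from
\begin{align*}
Z_T = k + \sum_{j=1}^{k} Z_T^{(j)} + \sum_{j=1}^{m} \tilde{Z}_{T,j},
\end{align*}
the first step is to argue that, conditional on the $\sigma$-algebra generated by $\Gamma_T$ (that is, the entire spine topology, split times, and the rescaled offspring numbers at spine-splitting events), the subpopulations $Z_T^{(1)},\ldots,Z_T^{(k)},\tilde{Z}_{T,1},\ldots,\tilde{Z}_{T,m}$ are mutually independent. This is a consequence of the branching Markov property under $\mathbb{Q}_{\theta,T}^{(k)}$ (Lemma~\ref{branlem}) together with the observation from Lemma~\ref{lem:bots} that, along any spine branch, births-off-the-spine occur at a rate independent of the number of spines being carried: each lineage generates births-off-the-spine independently across $[\varsigma_j,T]$, and each non-spine child born at a split event initiates an independent subtree. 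Consequently, the conditional Laplace transform of $Z_T$ factorises as $e^{-\varphi\overline{F}(T)k}$ times a product over lineages and over splits.

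The next step is to organise the $k$ lineages by their separation time. Recall $\varsigma_1=0$, and for each spine split time $\tau_i$ of size $g_i$, there are exactly $g_i-1$ indices $j\ge 2$ with $\varsigma_j=\tau_i$, giving the identity $1+\sum_{i=1}^m(g_i-1)=k$. With the convention $\rho_0=0$, $g_0=2$, I then have $g_j-1$ lineages whose separation time equals $\rho_j T$ for every $j=0,1,\ldots,m$. Applying the lineage Laplace transform \eqref{eq:lineage} to each such lineage and raising to the power $g_j-1$ yields, in the limit $T\to\infty$,
\begin{align*}
\lim_{T\to\infty}\mathbb{Q}_{\theta\overline{F}(T),T}^{(k)}\!\left[\prod_{j=1}^{k} e^{-\varphi\overline{F}(T)Z_T^{(j)}}\,\Big|\,\Gamma_T\right]
=\prod_{j=0}^{m}\left(\frac{1+(1-\rho_j)\theta^{\alpha-1}}{1+(1-\rho_j)(\theta+\varphi)^{\alpha-1}}\right)^{(g_j-1)\frac{\alpha}{\alpha-1}}.
\end{align*}

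For the split-event subtrees I apply \eqref{eq:spliteffect2}: conditional on $\overline{F}(T)L_{\tau_j}=x_j$ and the split happening at time $\rho_jT$, the rescaled Laplace transform of $\tilde{Z}_{T,j}$ tends to $\exp(-G(\rho_j,\varphi)x_j)$, and by the conditional independence above these multiply into $\exp(-\sum_{j=1}^m x_j G(\rho_j,\varphi))$. Finally, the trivial factor $e^{-\varphi\overline{F}(T)k}\to 1$ since $\overline{F}(T)\to 0$ by \eqref{eq:tchaik}. Multiplying the three factors gives precisely the right-hand side of \eqref{eq:kiwi}.

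The main (minor) obstacle is making the conditional independence rigorous: one needs to verify that conditioning on $\Gamma_T$, which involves a continuous density in the $x_j$ and $t_i$, yields the same factorisation as conditioning on the discrete skeleton of spine splits together with the rescaled offspring sizes. This is best handled by first conditioning on $\mathcal{G}_T^{(k)}$ together with $\{L_{\tau_j}=\ell_j\}$ for integer $\ell_j$, applying the branching Markov property there, and then passing to the limit $T\to\infty$ with $\ell_j=\lfloor x_j/\overline{F}(T)\rfloor$ using the previously computed asymptotics (which are continuous in their arguments).
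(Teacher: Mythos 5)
Your proof is correct and follows essentially the same approach as the paper: the same population decomposition from Section~\ref{sec:lineage}, the same grouping of the $k$ lineages by separation time (noting that $g_i-1$ lineages separate at $\tau_i$), and direct application of \eqref{eq:lineage} and \eqref{eq:spliteffect2} via the branching Markov property. Your closing paragraph on making the conditional independence rigorous by conditioning first on the discrete split data and then passing to the limit is a sensible refinement that the paper leaves implicit.
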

\begin{proof}
Using the decomposition \eqref{eq:decomp2} we have 
\begin{align*}
\mathbb{Q}_{\theta \overline{F}(T),T}^{(k)} [ e^{ - \varphi \overline{F}(T) Z_T} | \Gamma ] &= e^{ - \theta \overline{F}(T) k } \prod_{ j = 1}^k 
\mathbb{Q}^{(k),T}_{\theta\overline{F}(T)}  [ e^{ - \varphi \overline{F}(T) Z_{T,j} } | \Gamma_T ] \prod_{j=1}^m \mathbb{Q}_{\theta \overline{F}(T),T}^{(k)} [ e^{ - \varphi \overline{F}(T) \tilde{Z}_{T,j} } | \Gamma_T ].
\end{align*}
Recall the definition of $\varsigma_j$ given in Section \ref{sec:lineage}. Each $\varsigma_j$ is the initial time of a lineage, and is equal to some $\tau_i$. Moreover, for each $i =0,\ldots,m$ there are exactly $g_i - 1$ different $j$ such that $\varsigma_j = \tau_i$. It follows using \eqref{eq:lineage} that
\begin{align*}
\lim_{T \to \infty}  \prod_{ j = 1}^k \mathbb{Q}^{(k),T}_{\theta\overline{F}(T)}  [ e^{ - \varphi \overline{F}(T) Z_T^{(j)} } | \Gamma_T ] = \prod_{j=0}^m  \left( \frac{ 1 + (1-\rho)\theta^{\alpha-1} }{ 1 + (1-\rho)(\theta+\varphi)^{\alpha-1} }  \right)^{(g_j-1)\frac{\alpha}{\alpha-1}} .
\end{align*}
As for the other term, by \eqref{eq:spliteffect2} we have immediately
\begin{align*}
\lim_{T \to \infty}  \prod_{j=1}^m \mathbb{Q}_{\theta \overline{F}(T),T}^{(k)} [ e^{ - \varphi \overline{F}(T) \tilde{Z}_{T,j} } | \Gamma_T ] = \exp \left( - \sum_{ j = 1}^m x_j G(\rho_j,\varphi) \right). 
\end{align*}
Combining the two equations completes the proof.
\end{proof}

\subsection{The joint law of the ancestral tree, split offspring sizes, and entire population}
With Corollary \ref{cor:grapefruit} and Lemma \ref{lem:kiwi} at hand, we now prove the following. 
\begin{lemma} \label{lem:joint}
With $\Gamma_T$ as in Lemma \ref{lem:kiwi} we have 
\begin{align*}
\lim_{T \to \infty}   \mathbb{Q}_{\theta \overline{F}(T),T}^{(k)} [ e^{ - (\varphi-\theta) \overline{F}(T) Z_T} ; \Gamma_T ] &= \frac{ 1 }{ \mathbb{E}[W^k_{\alpha-1}e^{ - \theta W_{\alpha-1}} ]} \varphi^{ - \alpha k } ( 1 + \varphi^{1-\alpha})^{ - \frac{\alpha}{\alpha-1}}\\
&\times \prod_{i=1}^m 
 \frac{\alpha \Gamma(g_i-\alpha)}{\Gamma(2-\alpha)}  (1 - t_i + \varphi^{1-\alpha} )^{-g_i} \Delta_{g_i,t_i}^\varphi(x_i).
\end{align*}

\end{lemma}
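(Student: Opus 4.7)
The plan is to combine Corollary~\ref{cor:grapefruit} and Lemma~\ref{lem:kiwi} through the identity
\[
\mathbb{Q}_{\theta \overline{F}(T),T}^{(k)}\bigl[ e^{ - (\varphi-\theta) \overline{F}(T) Z_T} ; \Gamma_T \bigr] = \mathbb{Q}_{\theta \overline{F}(T),T}^{(k)}\bigl[ e^{ - (\varphi-\theta) \overline{F}(T) Z_T} \bigm| \Gamma_T \bigr] \cdot \mathbb{Q}_{\theta \overline{F}(T),T}^{(k)}(\Gamma_T),
\]
and passing to the limit in each factor. Applying Lemma~\ref{lem:kiwi} with $\varphi-\theta$ in place of $\varphi$ (so that $\theta + (\varphi-\theta) = \varphi$ throughout its statement), the conditional factor converges to
\[
\prod_{j=0}^m \left(\frac{1+(1-t_j)\theta^{\alpha-1}}{1+(1-t_j)\varphi^{\alpha-1}}\right)^{(g_j-1)\frac{\alpha}{\alpha-1}} \exp\!\left(-\sum_{j=1}^m x_j \widetilde G(t_j)\right),
\]
with the conventions $t_0 = 0, \ g_0 = 2$, and where $\widetilde G(t) := (1-t+\varphi^{1-\alpha})^{-1/(\alpha-1)} - (1-t+\theta^{1-\alpha})^{-1/(\alpha-1)}$. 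The unconditional factor is already supplied by Corollary~\ref{cor:grapefruit}.

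Next I would invoke the elementary identity
\[
\Delta^\theta_{g,t}(x)\, e^{-x\widetilde G(t)} = \left(\frac{1-t+\varphi^{1-\alpha}}{1-t+\theta^{1-\alpha}}\right)^{\frac{g-\alpha}{\alpha-1}} \Delta^\varphi_{g,t}(x),
\]
which is immediate from the explicit form \eqref{eq:Delta}: the exponential precisely converts the rate parameter $(1-t+\theta^{1-\alpha})^{-1/(\alpha-1)}$ into $(1-t+\varphi^{1-\alpha})^{-1/(\alpha-1)}$, while the displayed ratio is the corresponding change in the normalising constant. This step transforms each $\Delta^\theta$ density into the target $\Delta^\varphi$ density at the cost of explicit tracked prefactors.

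It then remains to verify the algebraic collapse. Writing $A_j := 1-t_j+\theta^{1-\alpha}$ and $B_j := 1-t_j+\varphi^{1-\alpha}$, each ratio in the limit of the conditional factor rearranges as $\theta^{\alpha-1}A_j / (\varphi^{\alpha-1} B_j)$, producing overall $(\theta/\varphi)^{\alpha k}$ (using $\sum_{j=0}^m(g_j-1)=1+(k-1)=k$) together with $\prod_{j=0}^m(A_j/B_j)^{\alpha(g_j-1)/(\alpha-1)}$. The $(\theta/\varphi)^{\alpha k}$ absorbs the $\theta^{-\alpha k}$ of Corollary~\ref{cor:grapefruit} into the desired $\varphi^{-\alpha k}$; the $j=0$ contribution combines with $(1+\theta^{1-\alpha})^{-\alpha/(\alpha-1)} = A_0^{-\alpha/(\alpha-1)}$ to give $(1+\varphi^{1-\alpha})^{-\alpha/(\alpha-1)} = B_0^{-\alpha/(\alpha-1)}$; and for $j\ge 1$ the net exponent of $A_j/B_j$ becomes $[\alpha(g_j-1)-(g_j-\alpha)]/(\alpha-1) = g_j$, so the $(A_j/B_j)^{g_j}$ factors pair with the $A_j^{-g_j}$ from Corollary~\ref{cor:grapefruit} to produce $B_j^{-g_j} = (1-t_j+\varphi^{1-\alpha})^{-g_j}$, matching the stated formula.

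The main obstacle is the purely algebraic bookkeeping: in particular, the ``phantom'' index $j=0$ with its conventions $g_0 = 2,\ t_0 = 0$ has to be handled carefully, since it is precisely this term that produces the $(1+\varphi^{1-\alpha})^{-\alpha/(\alpha-1)}$ prefactor and contributes to the overall power of $\theta/\varphi$. Any miscount here would shift the final exponent of $\varphi$ and destroy the claimed identity.
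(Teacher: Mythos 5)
Your proposal is correct and follows essentially the same route as the paper: multiply the limit from Corollary~\ref{cor:grapefruit} by the conditional limit from Lemma~\ref{lem:kiwi}, convert each $\Delta^\theta_{g_i,t_i}$ into $\Delta^\varphi_{g_i,t_i}$ by absorbing the exponential, and tidy up using $\sum_{j=0}^m(g_j-1)=k$. Your explicit display of the conversion identity $\Delta^\theta_{g,t}(x)\,e^{-x\widetilde G(t)} = \bigl(\tfrac{1-t+\varphi^{1-\alpha}}{1-t+\theta^{1-\alpha}}\bigr)^{\frac{g-\alpha}{\alpha-1}}\Delta^\varphi_{g,t}(x)$ is a useful addition: the paper's intermediate display for $R_i$ omits the $e^{-x_i\widetilde G(t_i)}$ factor from Lemma~\ref{lem:kiwi}, even though that factor is precisely what is needed for the subsequent claimed identity $R_i = \tfrac{\alpha\Gamma(g_i-\alpha)}{\Gamma(2-\alpha)}(\theta/\varphi)^{(g_i-1)\alpha}(1-t_i+\varphi^{1-\alpha})^{-g_i}\Delta^\varphi_{g_i,t_i}(x_i)$ to hold, and you track it correctly.
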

\begin{proof}
Simply multiplying the main equations of Corollary \ref{cor:grapefruit} and Lemma \ref{lem:kiwi} (and carefully separating the $j=0$ term out in the product in \eqref{eq:kiwi}), we have immediately
\begin{align} \label{eq:elanga1}
\mathbb{Q}_{\theta \overline{F}(T),T}^{(k)} [ e^{ - (\varphi - \theta)\overline{F}(T) Z_T} ; \Gamma_T ] = \frac{\theta^{-\alpha k} (\theta^{1-\alpha}+1)^{-\frac{\alpha}{\alpha-1}}  }{ \mathbb{E}[W_{\alpha-1}e^{ - \theta W_{\alpha-1}} ]} \left( \frac{1+\theta^{\alpha-1}}{1 + \varphi^{\alpha-1}} \right)^{\frac{\alpha}{\alpha-1}} \prod_{ i =1}^m R_i \mathrm{d}x_i \mathrm{d}t_i,
\end{align}
\begin{align*}
R_i = \frac{\alpha \Gamma(g_i-\alpha)}{\Gamma(2-\alpha)}(1 - t_i + \theta^{1-\alpha} )^{-g_i} \left( \frac{1 + (1-t_i) \theta^{\alpha-1}}{ 1+ (1-t_i) \varphi^{\alpha-1} } \right)^{(g_i-1)\frac{\alpha}{\alpha-1}} \Delta^\theta_{g_i,t_i}(x_i).
\end{align*}
Using the definition \eqref{eq:Delta} of $\Delta^\theta_{g,\rho}(x)$, a calculation verifies that
\begin{align} \label{eq:elanga2}
R_i =  \frac{\alpha \Gamma(g_i-\alpha)}{\Gamma(2-\alpha)} (\theta/\varphi)^{(g_i-1)\alpha} (1 - t_i + \varphi^{1-\alpha} )^{-g_i} \Delta_{g_i,t_i}^\varphi(x_i),
\end{align}
where we note the latter expression above involves $\Delta_{g_i,t_i}^\varphi$ as opposed to $\Delta_{g_i,t_i}^\theta$.

Plugging \eqref{eq:elanga1} into \eqref{eq:elanga2}, and making good use of the identity $\sum_{i=1}^m (g_i-1) = k - 1$, we obtain the result.

\end{proof}

The reader will note from Lemma \ref{lem:joint}, we have
\begin{align*}
\mathbb{Q}_{\theta \overline{F}(T),T}^{(k)} [ e^{ - (\varphi-\theta)\overline{F}(T) Z_T} ; \Gamma_T ] = \mathbb{Q}_{\varphi \overline{F}(T),T}^{(k)} [ \Gamma_T ] \frac{ \mathbb{E}[W_{\alpha-1}e^{ - \varphi W_{\alpha-1}} ]}{\mathbb{E}[W_{\alpha-1}e^{ - \theta W_{\alpha-1}} ]}. 
\end{align*}
This identity may alternatively be derived from considering the respective changes of measure.

\subsection{Tree probabilities for uniform choice under $\mathbb{P}^{(k)}_{{\rm unif}, T}$.}

We are almost ready to prove the main results, Theorems \ref{thm:A}, Theorem \ref{thm:B} and Theorem \ref{thm:C}.
We begin with the following formulation.
\begin{theorem}
We have
\begin{align} \label{eq:vigil}
&\lim_{T \to \infty} \mathbb{P}^{(k)}_{{\rm unif}, T}\Big(  \mathcal{T}(\xi) = (\beta_0,\ldots,\beta_m) , \tau_i/T \in \ud  t_i,  \overline{F}(T) L_{\tau_i} \in \mathrm{d}x_i, i =1,\ldots,m  |Z_T\ge k\Big) \nonumber \\
&=\frac{1}{ (k-1)!} \int_0^\infty  \varphi^{ - (\alpha-1) k - 1 } ( 1 + \varphi^{1-\alpha})^{ - \frac{\alpha}{\alpha-1}} \prod_{i=1}^m  \frac{\alpha \Gamma(g_i-\alpha)}{\Gamma(2-\alpha)}  (1 - t_i + \varphi^{1-\alpha} )^{-g_i} \Delta_{g_i,t_i}^\varphi(x_i) ~
\mathrm{d}\varphi,
\end{align}
where recalling \eqref{eq:Delta} we have
\begin{align} \label{eq:Delta2}
\Delta^\varphi_{g,\rho}(x) := \frac{ x^{g-\alpha-1} }{ \Gamma(g-\alpha) (1 - \rho+\varphi^{1-\alpha})^{\frac{g-\alpha}{\alpha-1}}}  \exp \left\{  - \frac{x}{(1-\rho+\varphi^{1-\alpha})^{\frac{1}{\alpha-1}} } \right\}.
\end{align}
\end{theorem}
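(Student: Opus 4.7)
The plan is to invert the change of measure by combining the identity \eqref{punif1} with Lemma \ref{lem:joint}. Lemma \ref{lem:joint} already identifies the joint Laplace transform of $\Gamma_T$ and $\overline{F}(T)Z_T$ under $\mathbb{Q}^{(k)}_{\theta_0\overline{F}(T),T}$ for any $\theta_0 > 0$; all that remains is to undo the size-bias by $Z_T^{(k)}$ via an integral representation that will produce the integration variable $\varphi$ appearing in \eqref{eq:vigil}.

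Fix a dummy parameter $\theta_0 > 0$ (which will drop out of the final answer) and apply \eqref{punif1} with $f = \mathbf{1}_{\Gamma_T}$ and $\theta = \theta_0\overline{F}(T)$. To convert the resulting factor $1/Z_T^{(k)}$ into a Laplace-like integral, I would use the Beta identity $1/Z_T^{(k)} = \tfrac{1}{(k-1)!}\int_0^1 u^{Z_T-k}(1-u)^{k-1}\,du$ (valid on $\{Z_T\geq k\}$) and substitute $u = e^{-\overline{F}(T)\varphi}$, which gives
\[
\frac{1}{Z_T^{(k)}} = \frac{\overline{F}(T)}{(k-1)!}\int_0^\infty\bigl(1-e^{-\overline{F}(T)\varphi}\bigr)^{k-1} e^{-(Z_T-k+1)\overline{F}(T)\varphi}\,d\varphi.
\]
After Fubini, the inner $\mathbb{Q}$-expectation becomes exactly $\mathbb{Q}^{(k)}_{\theta_0\overline{F}(T),T}\bigl[\mathbf{1}_{\Gamma_T}\,e^{-(\varphi-\theta_0)\overline{F}(T)Z_T}\bigr]$, up to the harmless factor $e^{(k-1)\overline{F}(T)\varphi}\to 1$, and its pointwise limit in $\varphi$ is exactly what Lemma \ref{lem:joint} provides.

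To pass to the limit $T\to\infty$, I combine three asymptotic inputs: $(1-e^{-\overline{F}(T)\varphi})^{k-1}\sim\overline{F}(T)^{k-1}\varphi^{k-1}$ pointwise; the pointwise limit from Lemma \ref{lem:joint}; and the equivalence $\overline{F}(T)^k\,\mathbb{E}\bigl[Z_T^{(k)}e^{-\theta_0\overline{F}(T)Z_T}\,\big|\,Z_T\geq k\bigr]\to\mathbb{E}\bigl[W_{\alpha-1}^k e^{-\theta_0 W_{\alpha-1}}\bigr]$, which follows from \eqref{kmomentsrho} (with $\rho=0$) together with the Yaglom limit \eqref{poplarge} (which gives $\mathbb{P}(Z_T\geq k)\sim\overline{F}(T)$, since $W_{\alpha-1}$ is absolutely continuous). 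The powers of $\overline{F}(T)$ cancel, and the factor $\mathbb{E}[W_{\alpha-1}^k e^{-\theta_0 W_{\alpha-1}}]$ precisely cancels the reciprocal factor from Lemma \ref{lem:joint}, yielding a limit that is manifestly independent of $\theta_0$. A short bookkeeping using $\varphi^{k-1}\cdot\varphi^{-\alpha k} = \varphi^{-(\alpha-1)k-1}$ then identifies the limit with the right-hand side of \eqref{eq:vigil}.

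The main obstacle is to justify the interchange of the $\varphi$-integral with the $T\to\infty$ limit via a uniform-in-$T$ integrable majorant. Near $\varphi=0$ this is provided by the $\varphi^{k-1}$ vanishing of $(1-e^{-\overline{F}(T)\varphi})^{k-1}/\overline{F}(T)^{k-1}$; near $\varphi = \infty$, the exponential decay $e^{-\varphi\overline{F}(T)Z_T}$ produces polynomial decay in $\varphi$ after the $\mathbb{Q}$-expectation, controllable via Lemma \ref{lem:Qmain} combined with the regularly-varying estimates of Lemma \ref{lem:cebolla} and \eqref{eq:tchaik}. Dominated convergence then concludes.
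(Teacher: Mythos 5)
Your proposal is correct and follows essentially the same route as the paper: both rely on the change-of-measure identity \eqref{punif1} with $\theta = \theta_0\overline{F}(T)$, an integral representation of $1/Z_T^{(k)}$ to produce the auxiliary variable $\varphi$, the normalisation $\overline{F}(T)^k\,\mathbb{E}[Z_T^k e^{-\theta_0\overline{F}(T)Z_T}\mid Z_T\geq k]\to\mathbb{E}[W_{\alpha-1}^k e^{-\theta_0 W_{\alpha-1}}]$ from \eqref{kmomentsrho}, and Lemma \ref{lem:joint} together with dominated convergence to pass to the limit. The only divergence is technical: you use the exact Beta identity $1/Z_T^{(k)} = \tfrac{1}{(k-1)!}\int_0^1 u^{Z_T-k}(1-u)^{k-1}\,\mathrm{d}u$ with the substitution $u = e^{-\overline{F}(T)\varphi}$, whereas the paper first replaces $Z_T^{(k)}$ by $Z_T^k$ asymptotically and then applies the Gamma integral $1/z^k = \tfrac{1}{(k-1)!}\int_0^\infty\varphi^{k-1}e^{-\varphi z}\,\mathrm{d}\varphi$ — your version avoids that intermediate approximation and is, if anything, slightly cleaner.
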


\begin{proof}
Taking an asymptotic version of \eqref{punif1} (simply replacing $Z_T^{(k)}$ with $Z_T^k$) with $\theta \overline{F}(T)$ in place of $\theta$ we have 
\begin{equation}\label{punif3}
\mathbb{E}^{(k)}_{{\rm unif},T }\left[ f(\underline{u})\Big|Z_T\ge k\right] \sim \mathbb{E}\left[Z^{k}_T e^{-\theta  \overline{F}(T) Z_T}\Big|Z_T\ge k\right] \mathbb{Q}^{(k)}_{\theta, T}\left[\frac{f(\xi_T)}{Z^k_T e^{-\theta \overline{F}(T) Z_T}}\right].
\end{equation}
Now according to the Gamma integral we have $\frac{1}{z^k} = \frac{1}{(k-1)!} \int_0^\infty \varphi^{k-1} e^{ - \varphi z} \mathrm{d} \varphi$, so that using Fubini's theorem we may instead write 
\begin{equation}\label{punif4}
\mathbb{E}^{(k)}_{{\rm unif},T }\left[ f(\underline{u})\Big|Z_T\ge k\right] \sim \frac{1}{(k-1)!} \mathbb{E}\left[Z^{k}_T e^{-\theta  \overline{F}(T) Z_T}\Big|Z_T\ge k\right] \int_0^\infty \varphi^{k-1}  \mathbb{Q}^{(k)}_{\theta, T}\left[ f(\xi_T) e^{Z_T } e^{ - (\varphi-\theta \overline{F}(T) ) Z_T} \right]\mathrm{d}\varphi.
\end{equation}
Changing variable from $\varphi$ to $\varphi \overline{F}(T)$ we obtain
\begin{equation}\label{punif5}
\mathbb{E}^{(k)}_{{\rm unif},T }\left[ f(\underline{u})\Big|Z_T\ge k\right] \sim \frac{\overline{F}(T)^k}{(k-1)!} \mathbb{E}\left[ Z^{k}_T e^{-\theta  \overline{F}(T) Z_T}\Big|Z_T\ge k\right] \int_0^\infty \varphi^{k-1}  \mathbb{Q}^{(k)}_{\theta, T}\left[ f(\xi_T)  e^{ - (\varphi-\theta) \overline{F}(T) ) Z_T} \right]\mathrm{d}\varphi.
\end{equation}
Now using the case $\rho = 0$ \eqref{kmomentsrho} this reduces to 
\begin{equation}\label{punif6}
\mathbb{E}^{(k)}_{{\rm unif},T }\left[ f(\underline{u})\Big|Z_T\ge k\right] \sim \frac{\mathbb{E}[W_{\alpha-1}^k e^{ - \theta W_{\alpha-1} } ] }{(k-1)!} \int_0^\infty \varphi^{k-1}  \mathbb{Q}^{(k)}_{\theta, T}\left[ f(\xi_T)  e^{ - (\varphi-\theta) \overline{F}(T) ) Z_T} \right]\mathrm{d}\varphi.
\end{equation}
Now we consider the case where $f(\underline{u})$ is the indicator function of the event $\{ \mathcal{T}(\xi) = (\beta_0,\ldots,\beta_m) , \tau_i/T \in \ud t_i, \overline{F}(T) L_{\tau_i} \in \mathrm{d}x_i, i =1,\ldots,m \}$. Using Lemma \ref{lem:joint} and the bounded convergence theorem we obtain  
\begin{align}\label{punif6}
&\lim_{T \to \infty} \mathbb{E}^{(k)}_{{\rm unif},T }\left[ f(\underline{u})\Big|Z_T\ge k\right] \nonumber \\
 &=  \frac{ 1 }{ (k-1)!} \int_0^\infty  \varphi^{k-1} \varphi^{ - \alpha k } ( 1 + \varphi^{1-\alpha})^{ - \frac{\alpha}{\alpha-1}} \prod_{i=1}^m  \frac{\alpha \Gamma(g_i-\alpha)}{\Gamma(2-\alpha)}  (1 - t_i + \varphi^{1-\alpha} )^{-g_i} \Delta_{g_i,t_i}^\varphi(x_i) ~
\mathrm{d}\varphi,
\end{align}
thereby completing the proof.

\end{proof}

Consider now taking the change of variable $w = (1 + \varphi)^{1-\alpha}$. The following equations are easily verified
\begin{align}
\varphi^{-(\alpha-1)} &= \frac{1-w}{w} \label{X1},\\
\frac{\mathrm{d}\varphi}{\varphi} &= \frac{ \mathrm{d}w }{ (\alpha-1) w (1-w) }. \label{X2}
\end{align}
Using \eqref{X1} and \eqref{X2} in \eqref{eq:vigil} we obtain
\begin{align} \label{eq:vigil3}
&\lim_{T \to \infty} \mathbb{P}^{(k)}_{{\rm unif}, T}\Big( \{\mathcal{T}(\xi) = (\beta_0,\ldots,\beta_m) , \tau_i/T \in \ud  t_i,  \overline{F}(T) L_{\tau_i} \in \mathrm{d}x_i, i =1,\ldots,m  \}  |Z_T\ge k\Big) \nonumber \\
&= \frac{ 1  }{ (\alpha-1) (k-1)!} \int_0^1 \frac{\mathrm{d}w}{w(1-w)} \left( \frac{1-w}{w} \right)^k w^{\frac{\alpha}{\alpha-1}} \prod_{i=1}^m  \frac{\alpha \Gamma(g_i-\alpha)}{\Gamma(2-\alpha)}  (\frac{1}{w} - t_i )^{-g_i} \Delta_{g_i,t_i}^w(x_i),
\end{align}
where we are abusing notation slightly and writing 
\begin{align} \label{eq:Delta3}
\Delta^w_{g,\rho}(x) := \frac{ x^{g-\alpha-1} }{ \Gamma(g-\alpha) (1/w - \rho )^{\frac{g-\alpha}{\alpha-1}}}  \exp \left\{  - \frac{x}{(1/w-\rho)^{\frac{1}{\alpha-1}} } \right\}
\end{align}
for the probability density function of $(1/w-\rho)^{\frac{1}{\alpha-1}}$ times a Gamma random variable with shape $g-\alpha$.
Tidying \eqref{eq:vigil3} and using the identity $\sum_{i=1}^m (g_i-1)= k-1$, we ultimately obtain
\begin{align} \label{eq:vigil4}
& \lim_{T \to \infty} \mathbb{P}^{(k)}_{{\rm unif}, T}\Big( \{\mathcal{T}(\xi) = (\beta_0,\ldots,\beta_m) , \tau_i/T \in \ud  t_i,  \overline{F}(T) L_{\tau_i} \in \mathrm{d}x_i, i =1,\ldots,m  \}  |Z_T\ge k\Big) \nonumber \\
&=  \frac{1}{ (\alpha-1) (k-1)!} \int_0^1 (1-w)^{k-1} w^{m + \frac{2-\alpha}{\alpha-1} }  \prod_{i=1}^m  \frac{\alpha \Gamma(g_i-\alpha)}{\Gamma(2-\alpha)}  (1 - w t_i )^{-g_i} \Delta_{g_i,t_i}^w(x_i) ~  \mathrm{d}w.
\end{align}

Let us now recapitulate, and prove Theorem \ref{thm:A}, Theorem \ref{thm:B} and Theorem \ref{thm:C} explicitly. 
\begin{proof}[Proof of Theorems \ref{thm:A}, \ref{thm:B} and \ref{thm:C}]
Consider a continuous-time Galton-Watson tree with offspring distribution in the universality class \eqref{eq:hyp1}. Under a probability measure $ \mathbb{P}^{(k)}_{{\rm unif}, T}( \cdot | Z_T \geq k )$, condition on the event $\{Z_T \geq k \}$ that there are at least $k$ particles alive at time $T$, and sample $k$ particles uniformly from the population at time $T$. Let $(\pi^{(k,T)}_t)_{t \in [0,T]}$ denote the joint ancestral process of these particles. Let $\mathcal{T}(\pi^{(k,T)})$ denote the splitting process associated with this ancestry, and let $\tau_1,\ldots,\tau_m$ denote the split times. Finally, let $L_1,\ldots,L_m$ denote the offspring sizes at these split times.

Asymptotically in $T$, the equation \eqref{eq:vigil4} holds, which in particular does not depend on the explicit form of the offspring distribution, but only the parameter $\alpha$ governing the universality class \eqref{eq:hyp1}. As such, that proves Theorem \ref{thm:A}.

Next we note that the convergence in distribution for such trees in this universality class proves Theorem \ref{thm:C}.

Finally, we note that Theorem \ref{thm:B} is obtained from Theorem \ref{thm:C} by integrating against $x_i$. 

\end{proof}
\subsection{The Lauricella representation} \label{sec:lauricellaproof}

In this section we derive the Lauricella representation for the joint density of the split times. 
According to Theorem \ref{thm:B} we have
\begin{align} \label{eq:vigil99}
&\mathbb{P}\left( \mathcal{T}(\nu) = \overline{\beta} , \tau_1 \in \mathrm{d}t_1,\ldots,\tau_m \in \mathrm{d}t_m\right) \nonumber \\
&=  \frac{1}{ (\alpha-1) (k-1)!}  \prod_{i=1}^m  \frac{\alpha \Gamma(g_i-\alpha)}{\Gamma(2-\alpha)}   \int_0^1 (1-w)^{k-1} w^{m + \frac{2-\alpha}{\alpha-1} } \prod_{i=1}^m  (1 - w t_i )^{-g_i}  ~  \mathrm{d}w.
\end{align}
We now derive our alternative representation for the right hand side. Let us begin by noting that 
\begin{align} \label{eq:andres}
\int_0^1 w^{m+\frac{2-\alpha}{\alpha-1}}&(1-w)^{k-1}\prod_{i=1}^m (1-wt_i)^{-g_i}\ud w  \nonumber  \\
&=\int_0^1\ud w\, w^{m+\frac{2-\alpha}{\alpha-1}}(1-w)^{k-1}\sum_{j_i,\ldots, j_m=0}^m \frac{(g_1)_{j_1}\cdots(g_m)_{j_m} }{j_1!\cdots j_m!}(wt_1)^{j_1}\cdots (wt_m)^{j_m} \nonumber \\
&=\sum_{j_i,\ldots, j_m=0}^m \frac{(g_1)_{j_1}\cdots(g_m)_{j_m} }{j_1!\cdots j_m!}t_1^{j_1}\cdots t_m^{j_m}\int_0^1 w^{m+\frac{1}{\alpha-1}+\sum_{i=1}^m j_i-1}(1-w)^{k-1}\ud w  \nonumber  \\
&=\frac{\Gamma(m+\frac{1}{\alpha-1})\Gamma(k)}{\Gamma(k+m+\frac{1}{\alpha-1})}\sum_{j_i,\ldots, j_m=0}^m \frac{(m+\frac{1}{\alpha-1})_{j_1+\cdots+j_m}(g_1)_{j_1}\cdots(g_m)_{j_m} }{(k+m+\frac{1}{\alpha-1})_{j_1+\cdots+j_m}j_1!\cdots j_m!}t_1^{j_1}\cdots t_m^{j_m}  \nonumber  \\
&=\frac{\Gamma(m+\frac{1}{\alpha-1})\Gamma(k)}{\Gamma(k+m+\frac{1}{\alpha-1})}F^{(m)}_D\left[m+\frac{1}{\alpha-1}, g_1, \ldots, g_m; k+m+\frac{1}{\alpha-1}; t_1, \ldots, t_m\right],
\end{align}
where $F^{(m)}_D$ is the Lauricella hypergeometric function in $m$ variables $t_1, \ldots, t_m$ which was introduced by Lauricella \cite{Lauri}.  We note now that plugging \eqref{eq:andres} into \eqref{eq:vigil99} we obtain \eqref{eq:vigil_laur} as stated in the introduction.

We now appeal to a probabilistic representation of Chamayou and Wesolowski \cite{CW}. To set this up, let $(X_1, \ldots, X_n)$ be a random vector with  Dirichlet distribution with parameters $a=(a_1,\ldots, a_n)$ and $b>0$,  here denoted by ${\tt Dir}(a;b)$. In other words, its distribution is absolutely continuous with respect to the Lebesgue measure on $\mathbb{R}^d$ and
\[
D_n(a;b;x)=C\left(1-\sum_{i=1}^n x_i\right)^{b-1}\prod_{i=1}^n x_{i}^{a_i-1}\mathbf{1}_{\mathbb{T}_n}(x),
\]
where $x=(x_1,\ldots, x_n)\in \mathbb{R}^n$, 
\[
\mathbb{T}_n=\left\{(x_1,\ldots, x_n): x_i>0, i=1,\ldots, n, \sum_{i=1}^n x_i<1\right\}\quad \textrm{and}\quad C=\frac{\Gamma(b+\sum_{i=1}^n a_i)}{\Gamma(b)\Gamma(\sum_{i=1}^n a_i)}.
\]
An important property of the Dirichlet distribution is that it can be represented through independent gamma distributions, that is let $U_1, \ldots, U_{n}$ be independent Gamma r.v.'s with parameters $(\sigma, a_i)$, for $i=1,\ldots, n$, then 
\[
(X_1, \ldots, X_n)\stackrel{d}{=} \frac{(U_1, \ldots, U_{n})}{\sum_{i=1}^n U_i},
\]
where the latter is independent of $\sum_{i=1}^n U_i$.

Moreover, the Laplace exponent of the Dirichlet distribution satisfies 
\[
\mathbb{E}\left[e^{\langle t, X\rangle}\right]=C\int_{\mathbb{T}_n} e^{\langle t, x\rangle} \left(1-\sum_{i=1}^n x_i\right)^{b-1}\prod_{i=1}^n x_{i}^{a_i-1}\ud x_1 \cdots \ud x_n=\Phi^{(n)}_2(a;b;t),
\]
where $t=(t_1,\ldots, t_n)$. The function $\Phi^{(n)}_2$ can be viewed as multivariate version of the hypergeometric function ${}_1F_1$, i.e.
\[
\Phi^{(1)}_2(a,b,t)={}_1F_1(a; a+b; t),
\]
where the right-hand side is the Laplace transform of a beta r.v. with parameters $(a,b)$.

Next let $Z$ be a Gamma r.v. with parameters $(1,c)$. If $X\sim {\tt Dir}(a;b)$ and $X$ and $Z$ are independent, then the random vector $Y=ZX$ satisfies
\[
\mathbb{E}\Big[\exp\{\langle t, Y\rangle\}\Big]= F^{(n)}_D (c,a;b;t).
\]
Thus conditioning with respect to $Z$, we have
\[
F^{(n)}_D (c,a;b;t)=\frac{1}{\Gamma(c)}\int_0^\infty z^{c-1}e^{-z} \Phi_2^{(n)}(a;b;tz) \ud z.
\]
Conditioning with respect to $X$, we have
\begin{equation} \label{eq:ocosto}
F^{(n)}_D (c,a;b;t)=\int_{\mathbb{T}_n}\Big(1-\langle t,x\rangle\Big)^{-c} D_n(a; b;x)\ud x.
\end{equation}
Using \eqref{eq:ocosto} in \eqref{eq:andres} we obtain
\begin{align*}
&\int_0^1 w^{m+\frac{2-\alpha}{\alpha-1}}(1-w)^{k-1}\prod_{i=1}^m (1-wt_i)^{-g_i}\ud w  \nonumber  \\
&= \frac{\Gamma(m+\frac{1}{\alpha-1})\Gamma(k)}{\Gamma(k+m+\frac{1}{\alpha-1})}\int_{\mathbb{T}_m} \left( 1 - \langle t, x \rangle \right)^{ - m - \frac{1}{\alpha-1}} D_m( g_1,\ldots,g_m,k+m+\frac{1}{\alpha-1} ; \mathrm{d}x).
\end{align*}
We finally note that if $(E_1,\ldots,E_m)$ is distributed according to $ D_m( g_1,\ldots,g_m,k+m+\frac{1}{\alpha-1} ; \mathrm{d}x)$, then we have the identity in distribution
\begin{align*}
E_i = \frac{W_i}{W_1+\ldots+W_m+Q} \qquad W_i \sim \Gamma(g_i), Q \sim \Gamma(k+m+\frac{1}{\alpha-1}).
\end{align*}
As such we may instead write 
\begin{align*}
\int_0^1 w^{m+\frac{2-\alpha}{\alpha-1}}(1-w)^{k-1}\prod_{i=1}^m (1-wt_i)^{-g_i}\ud w
= \frac{\Gamma(m+\frac{1}{\alpha-1})\Gamma(k)}{\Gamma(k+m+\frac{1}{\alpha-1})} \mathbb{E}\left[ \left( 1 - \frac{ t_1 W_1 + \ldots + t_m W_m }{ W_1 + \ldots + W_m + Q} \right)^{-m-\frac{1}{\alpha-1}} \right].
\end{align*}

Using \eqref{eq:vigil99} we obtain 
\begin{align} \label{eq:vigil990}
&\mathbb{P}\left( \mathcal{T}(\nu) = \overline{\beta} , \tau_1 \in \mathrm{d}t_1,\ldots,\tau_m \in \mathrm{d}t_m\right) \nonumber \\
&=  \frac{1}{ (\alpha-1) (k-1)!}  \prod_{i=1}^m  \frac{\alpha \Gamma(g_i-\alpha)}{\Gamma(2-\alpha)}  \frac{\Gamma(m+\frac{1}{\alpha-1})\Gamma(k)}{\Gamma(k+m+\frac{1}{\alpha-1})} \mathbb{E}\left[ \left( 1 - \frac{ t_1 W_1 + \ldots + t_m W_m }{ W_1 + \ldots + W_m + Q} \right)^{-m-\frac{1}{\alpha-1}} \right],
\end{align}
where as above, $W_1$ are Gamma distrbuted with parameter $g_i$, and $Q$ is Gamma distributed with parameter $k+m+\frac{1}{\alpha-1}$.

\end{document}